\pdfoutput=1

\documentclass[11pt,a4paper,dvipsnames]{amsart}
\usepackage[T1]{fontenc}
\usepackage[utf8]{inputenc}
\usepackage[british]{babel}
\usepackage[a4paper,margin=1in]{geometry}
\usepackage{microtype,lmodern,needspace}
\usepackage{mathpazo}
\usepackage{euler}
\AtBeginDocument{%
 \DeclareFontShape{T1}{ppl}{m}{up}{<->ssub*ppl/m/n}{}%
 \DeclareFontShape{T1}{ppl}{b}{up}{<->ssub*ppl/b/n}{}%
}
\usepackage{amsmath,amssymb,amsthm,mathtools,mathrsfs}
\usepackage[shortlabels]{enumitem}
\usepackage{xcolor}
\definecolor{RonReviewColor}{RGB}{160,62,25}

\definecolor{RevisionColor}{RGB}{160,62,25}

\usepackage{array,booktabs,tabularx}
\usepackage{tikz-cd}
\usetikzlibrary{arrows.meta,positioning,calc}
\usepackage{placeins}
\usepackage[colorlinks=true,linkcolor=blue!60!black,citecolor=blue!60!black,urlcolor=blue!60!black]{hyperref}
\usepackage[capitalize,nameinlink]{cleveref}

\numberwithin{equation}{section}
\theoremstyle{plain}
\newtheorem{theorem}{Theorem}[section]
\newtheorem{proposition}[theorem]{Proposition}
\newtheorem{lemma}[theorem]{Lemma}
\newtheorem{corollary}[theorem]{Corollary}
\theoremstyle{definition}
\newtheorem{definition}[theorem]{Definition}
\theoremstyle{remark}
\newtheorem{remark}[theorem]{Remark}

\newcommand{\C}{\mathbb C}
\newcommand{\cA}{\mathcal A}
\newcommand{\cE}{\mathcal E}
\newcommand{\cF}{\mathcal F}
\newcommand{\cG}{\mathcal G}
\newcommand{\cI}{\mathcal I}
\newcommand{\cJ}{\mathcal J}

\newcommand{\cL}{\mathcal L}
\newcommand{\cM}{\mathcal M}

\newcommand{\cO}{\mathcal O}
\newcommand{\cP}{\mathcal P}
\newcommand{\cQ}{\mathcal Q}

\newcommand{\cV}{\mathcal V}
\newcommand{\M}{\mathfrak M}
\newcommand{\Mbar}{\overline{\mathfrak M}}
\newcommand{\Dsep}{\mathfrak D_{h,1}^{+,\varepsilon}}
\newcommand{\Hom}{\operatorname{Hom}}
\newcommand{\gr}{\operatorname{gr}}
\newcommand{\Id}{\operatorname{Id}}

\title[Non-projectedness of supermoduli spaces]{Supermoduli spaces are non-projected in every genus $g\geq 3$}
\author{Mauricio Corr\^ea}
\address{Dipartimento di Matematica, Universit\`a degli Studi di Bari
Aldo Moro, Bari, Italy}
\email{mauricio.barros@uniba.it}

\author{Ron Donagi}
\address{Departments of Mathematics and Physics, University of Pennsylvania,
Philadelphia, PA 19104, USA}
\email{donagi@math.upenn.edu}

\author{Simone Noja}
\address{Dipartimento di Matematica, Universit\`a degli Studi di Bari
Aldo Moro, Bari, Italy}
\email{simone.noja@uniba.it}
\date{}

\subjclass[2020]{Primary 14A30; Secondary 14D23, 14H10, 32G13, 14K25}
\keywords{Supermoduli space, projected supermanifold, Green obstruction,
spin curves, Neveu--Schwarz plumbing, Satake compactification, GAGA}

\hypersetup{pdftitle={Supermoduli spaces are non-projected in every genus at least three},
 pdfauthor={Mauricio Corr\^ea, Ron Donagi, Simone Noja}}
\newcommand{\eps}{\varepsilon}

\newcommand{\Ext}{\operatorname{Ext}}
\newcommand{\Spec}{\operatorname{Spec}}
\newcommand{\Spf}{\operatorname{Spf}}
\newcommand{\red}{\mathrm{red}}
\newcommand{\an}{\mathrm{an}}
\newcommand{\cB}{\mathcal B}

\begin{document}
\begin{abstract}
We prove that, for every genus $g\geq 3$, the primary obstruction
of each parity component of the moduli superstack of smooth
unpunctured super Riemann surfaces is nonzero, both algebraically
and holomorphically. Consequently, both components are
non-projected and, in particular, non-split: neither admits
an algebraic or holomorphic retraction onto its reduced
moduli stack of spin curves.
\end{abstract}

\maketitle
\tableofcontents

\section{Introduction}\label{sec:introduction}

Genus three is the threshold at which smooth unpunctured supermoduli
spaces cease to admit a holomorphic projection to the moduli of
spin curves.  We prove this in both spin parities and establish the
same obstruction in every higher genus by a uniform degeneration
argument.  The result determines a basic feature of these moduli
spaces: although their local deformation theory is unobstructed,
their global geometry contains nontrivial extension data over the
classical spin moduli stack.  These data obstruct a retraction
already at the first nontrivial infinitesimal order.  The theorem
thus gives a uniform description of the failure of the split
model throughout the stable range above genus two.

Let $\M_g^\eps$ be the moduli stack of smooth unpunctured super
Riemann surfaces of genus $g\geq2$, over $\C$, with spin parity
$\eps\in\{+,-\}$.  Its dimension is $(3g-3\mid2g-2)$, and
its reduction $\mathcal S_g^\eps$ parametrises smooth curves
$C$ with a theta characteristic $L$, where $L^2\simeq K_C$
and the parity of $h^0(C,L)$ is prescribed.  A projection is a
retraction of the canonical closed immersion
$\mathcal S_g^\eps\hookrightarrow\M_g^\eps$.  Splitness is
stronger: it identifies the structure sheaf with the exterior
algebra of the odd cotangent bundle on the reduction.  Either
property requires the vanishing of the primary obstruction
$$
 \omega_2(\M_g^\eps)\in
 H^1\!\left(\mathcal S_g^\eps,
   \Hom\!\left(\bigwedge^2T_-\M_g^\eps,T_+\M_g^\eps\right)
 \right).
$$
Here $T_+$ and $T_-$ denote the even and odd parts of the tangent bundle
restricted to the reduction.  Our main result is the following.

\begin{theorem}[Main theorem]\label{thm:intro-main}
For every $g\geq3$ and both $\eps=+,-$,
$$
 \omega_2^{\rm alg}(\M_g^\eps)\ne0,
 \qquad
 \omega_2^{\rm hol}((\M_g^\eps)^{\an})\ne0.
$$
Consequently both components of smooth unpunctured supermoduli
space are non-projected, and hence non-split, in the algebraic and
holomorphic categories.
\end{theorem}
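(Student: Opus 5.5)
The plan has three parts: a degeneration argument that pushes the nonvanishing of $\omega_2$ from genus $g$ down to a one-parameter family near the boundary, a local analytic computation there, and a GAGA-type comparison between the algebraic and holomorphic classes. I treat the two parities in parallel throughout.

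\textbf{Degeneration.} First I reduce to a neighbourhood of the boundary. I pass to the Deligne--Mumford compactification $\Mbar_g^\eps$ of supermoduli, with Neveu--Schwarz boundary divisor. Near a separating NS node of type $\Dsep$, where the curve splits into a genus $h$ and a genus $g-h$ component with the parities adding up to $\eps$, the supermoduli space is described by NS plumbing. The plumbing has gluing parameter $q_{\rm NS}$ and odd gluing parameters. The key formula is the super plumbing identity: $q_{\rm NS}^2$ equals the bosonic plumbing parameter $t$, corrected by a term quadratic in the odd moduli. That odd bilinear correction is exactly a component of the second fundamental form. Restricted to a suitable curve $\Delta$ transverse to the boundary inside $\mathcal S_g^\eps$, it gives a class in $H^1$ of a punctured disc neighbourhood with coefficients in $\Hom(\bigwedge^2 T_-,T_+)$. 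Following Donagi--Witten, I expect this local class to be nonzero because the quadratic term has a pole, or a $q^{-1}$ behaviour, in the odd directions. Those directions are spanned by sections of $L$ on the two components near the node, and they pair nondegenerately through the node.

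\textbf{Globalisation.} Next I show that this local nonvanishing forces the global class to be nonzero. I restrict $\omega_2(\M_g^\eps)$ to a well-chosen family: a complete curve, or a punctured disc family, in $\mathcal S_g^\eps$ whose limit meets the boundary divisor. I then apply functoriality of $\omega_2$ under restriction to sub-supermanifolds that are compatibly projected. If the global class vanished, its restriction would give a trivialising cochain. The boundary computation then shows this cochain must extend across the divisor with a pole of the wrong order, which is a contradiction. The condition $g\ge3$ enters here. The restriction of the odd tangent bundle, $H^1(C,L^{-1})$ plus node contributions, must contain two independent odd directions whose wedge pairs nontrivially into the even normal direction. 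That requires each side of the degeneration to carry enough odd moduli, namely $2h-2>0$ on a genus $h\ge2$ side together with $g-h\ge1$, and it requires that the even-spin case be realisable. For genus $g$ greater than $3$, induction on $g$ through the separating boundary stratum $\Dsep$ with $h=1$ reduces the statement to the genus-$(g-1)$ case: the class restricted to the boundary stratum contains $\omega_2$ of the lower-genus factor as a Künneth summand.

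\textbf{Algebraic versus holomorphic.} Finally I relate the two classes. The holomorphic class is the analytification of the algebraic class, so holomorphic nonvanishing implies algebraic nonvanishing. Doing the computation analytically near the boundary therefore yields both statements at once. Alternatively, the Satake compactification together with GAGA lets me compare $H^1$ on the relevant proper pieces. Non-projectedness follows because a projection would split the first-order extension and so kill $\omega_2$; non-splitness follows since split implies projected.

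The main obstacle is the base case $g=3$ in both parities, particularly $\eps=-$. There the boundary computation must produce a genuinely nonzero cohomology class rather than a coboundary, which means controlling the holomorphic extension across the boundary divisor. I would first try to exhibit an explicit test family in $\mathcal S_3^\eps$, a pencil degenerating to a separating node. On that family I would compute a pairing of $\omega_2$ with an explicit class in $H^0$ of the dual sheaf twisted by the canonical bundle, using Serre duality on a compactified curve, and check that the residue at the node is the nonzero NS plumbing coefficient.
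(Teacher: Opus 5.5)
The two steps that actually carry the proof are missing, and as written they would fail.

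\emph{Detection and passage to the interior.} You cannot detect anything on a transverse curve $\Delta$. A class ``in $H^1$ of a punctured disc neighbourhood'' is zero for any coherent coefficient sheaf, because a punctured disc is Stein. Moreover, $\omega_2(\M_g^\eps)$ lives on the open locus and cannot be restricted to a boundary stratum. So neither your K\"unneth induction nor any nonvanishing on $\Mbar_g^\eps$ or on $\Dsep$ says anything about $\M_g^\eps$ without a separate argument, and that argument is the whole difficulty.

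The paper instead builds a \emph{proper} test curve with odd directions inside the boundary:
\begin{itemize}
\item a core $(C,L)$ of genus $g-1$ with an ineffective even theta characteristic;
\item the attaching point moving over $C$;
\item an odd parameter $s$ along $0\ne\eta\in H^1(C,L^{-1})$, giving a superscheme $X_\eta$ whose primary class $s\otimes\eta$ is known;
\item an elliptic tail of parity $\eps$.
\end{itemize}
Ineffectivity makes $J_h^\partial$ an isomorphism. The $\sigma$-splitting $T=\cE_+\oplus N$ then makes the class $s\otimes J_h(\eta)$ obtained from \eqref{eq:primary-naturality} nonzero.

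Pushing this class off the boundary needs two further steps. First, a transverse formal extension of source and map together. Its obstructions lie in $H^1(Z,(\cQ\otimes\widetilde\cL^r)_{\bar0})$. They are killed at $r=1,2$ only by the $\boldsymbol\mu_4$ tail characters, which is why the even tail must be moved, and by degree for $r\geq3$. Second, survival after inverting the boundary parameter. This is done by removing poles from a Laurent primitive using $H^0\bigl(C,(\gr^n\cG^\times)^\sigma\bigr)=0$ for $n<0$. In the even case at $n=-1$ this is not a degree argument; it uses the nonsplitness of $\cF_\eta$.

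Your ``pole of the wrong order'' has no such mechanism behind it. Also, the construction is uniform in $g\geq3$, so there is no special base case $g=3$ to settle.

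\emph{The holomorphic step.} Holomorphic nonvanishing does imply algebraic nonvanishing. But ``doing the computation analytically near the boundary'' cannot work. A holomorphic splitting of the representing extension on $(\mathcal S_g^\eps)^{\an}$ need not be meromorphic along $D$; it may have essential singularities. Then there is no pole bound from which pole removal can start.

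The paper uses algebraicity exactly here: algebraic sections off a Cartier divisor have finite pole order (\cref{lem:algebraic-finite-poles}). So it proves $\omega_2^{\rm alg}\ne0$ first. It then shows that $H^1(\mathcal S_g^\eps,F)\to H^1((\mathcal S_g^\eps)^{\an},F^{\an})$ is injective on a \emph{nonproper} stack. Comparing $H^1$ on proper pieces does not give this.

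The mechanism is \cref{thm:GAGA}: reflexive Hartogs extension of the middle bundle, plus proper GAGA, on a normal projective compactification whose complement has codimension $\geq2$. Such compactifications come from normalising the Satake closure of the Jacobian locus in the function fields of the spin-level covers. The boundary there has dimension $\leq3g-5$, which is a second place where $g\geq3$ is needed, since $J_1\times J_1$ is a divisor in genus two. The result is then descended through the scalar $\boldsymbol\mu_2$-gerbe and by finite \'etale trace.
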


This gives the first proof of non-projectedness in genus three.
In joint work with Witten, one of
us proved non-projectedness of both components for $g\geq5$
and suggested that the result should hold for $g\geq3$
\cite{DW}.  Two of us subsequently proved the even genus-four
case \cite{DonagiNoja}.  Theorem~\ref{thm:intro-main} settles the genus-three case and the odd genus-four case, and provides a single degeneration argument for both parities in every genus $g\geq3$.

The threshold is sharp.
Both smooth genus-two components are split, for a cohomological reason.
Their coarse spin moduli spaces are finite over the affine coarse
space $M_2$; see \cite[Section~5.1]{DW} for the affineness of
$M_2$.  In characteristic zero, pushforward to the coarse space
is exact for these Deligne--Mumford stacks \cite{AOV}.  Their
higher coherent cohomology therefore vanishes.  In particular,
the primary class is zero, and odd dimension two leaves no further
splitting obstruction.  On the even component, the super period
matrix gives the projection used in the genus-two calculations
of D'Hoker--Phong \cite{DHP2}; its relation to the primary
obstruction is developed in \cite{Bettadapura}.
The condition $H^0(C,L)=0$ required for this super-period
construction holds throughout the smooth even component: every
even theta characteristic in genus two is ineffective.

Viewed on the ordinary moduli stack of spin curves, the theorem
is a uniform nonvanishing statement for a canonical coherent
extension class. Its deformation-theoretic significance is
that local unobstructedness does not ensure a global separation
of the even and odd moduli. At a spin curve $(C,L)$, the
corresponding tangent spaces are $H^1(C,T_C)$ and
$H^1(C,L^{-1})$, and the local deformation space is smooth.
For the filtration by powers of the odd ideal, the associated
graded structure sheaf is canonically the exterior algebra
of the odd cotangent bundle on $\mathcal S_g^\eps$.
Nevertheless, for every $g\geq3$, the quadratic terms in the
odd variables in the even transition functions cannot be
removed consistently on overlaps. The failure of a global
retraction is thus already visible in the first nonlinear
infinitesimal neighbourhood of the reduced moduli stack.

There is an entirely classical infinitesimal formulation of this
statement.  Write $X=\mathcal S_g^\eps$ and let $\cJ$ be
the odd ideal of $\M_g^\eps$.  The even parts of the local
algebras $\cO_{\M_g^\eps}/\cJ^3$ define an ordinary
square-zero thickening of $X$, with structure sheaf $\cA_2$
fitting into an exact sequence of sheaves of $\C$-algebras
$$
 0\longrightarrow\bigwedge^2(T_-\M_g^\eps)^\vee
 \longrightarrow\cA_2\longrightarrow\cO_X\longrightarrow0.
$$
The differences between local splittings are derivations into
the square-zero ideal.  Their class is precisely $\omega_2$
under the identification
$$
 \Ext^1_X\!\left(\Omega_X^1,
                 \bigwedge^2(T_-\M_g^\eps)^\vee\right)
 \simeq
 H^1\!\left(X,
      \Hom(\bigwedge^2T_-\M_g^\eps,T_+\M_g^\eps)\right).
$$
Thus the theorem proves the nontriviality of a canonical
infinitesimal extension of each ordinary spin moduli stack in
the stated range.  Equivalently, it proves that the vector-bundle
extension representing the same class,
$$
 0\longrightarrow T_+\M_g^\eps
 \longrightarrow\mathcal E_g^\eps
 \longrightarrow\bigwedge^2T_-\M_g^\eps
 \longrightarrow0,
$$
has no global splitting.  Its interpretation as a component of
the super Atiyah class, and its description through line bundles
on the fibre square of the universal curve, were established in
\cite{DWAtiyah}.  The result is therefore a nonvanishing theorem
for a geometrically defined coherent extension class on classical
moduli.  Stable degenerations detect this class, and the geometry
of the boundary controls its persistence on the open moduli
stack of smooth curves.

The common construction starts with a curve of genus $h=g-1$
equipped with an ineffective even theta characteristic, and
attaches an elliptic component at a Neveu--Schwarz node.  Such a
core is available for every $h\geq2$.  The pointed test family
constructed in \cite{DW} supplies a nonzero central comparison class.  The
spin structure of the elliptic component determines the parity of
the smoothed surface.  In the even case one includes its odd
deformation direction in the test source; in the odd case a
different elliptic character makes this enlargement unnecessary.
These are the essential parity distinctions.  The normal geometry
and the method for detecting the obstruction are shared.

To pass from the {boundary test map} to the interior, we deform its source
together with the morphism into stable supermoduli, preserving
the full normal line.  A transverse extension argument identifies
the successive obstruction groups.  Finite elliptic symmetries
eliminate the possible invariant obstructions at the first two
orders, and Serre duality eliminates the higher orders.  The
result is a transverse formal family.  Its comparison class
remains nonzero after the smoothing parameter is inverted: a
lowest Laurent coefficient argument rules out a primitive with
negative powers, and the constant coefficient is the original
nonzero boundary class.  {In the even component, the exceptional polar term of
order $-1$ is ruled out by the nonsplit extension defining the
odd cotangent bundle of the pointed test family.}
An algebraic splitting on smooth
supermoduli would restrict to just such a Laurent primitive,
since algebraic sections on a punctured neighbourhood have finite
pole order.  This proves nonvanishing on the smooth locus. For related non-projectedness results concerning stable
compactifications, see \cite{CorreaNojaCompactified}.

The passage from algebraic to holomorphic nonvanishing leads
to a separate question in ordinary algebraic geometry:
when can an algebraic extension of vector bundles become
split after analytification? Since the moduli spaces under
consideration are not proper, proper GAGA does not answer
this question directly. The following theorem gives a
sufficient geometric condition for preserving extension
classes on a nonproper variety.

\begin{theorem}[Comparison in codimension two]\label{thm:intro-gaga}
Let $U$ be a normal complex algebraic variety admitting a normal
projective compactification $X$ such that
$\operatorname{codim}_X(X\setminus U)\geq2$.
For every algebraic vector bundle $F$ on $U$, the natural map
$$
 H^1(U,F)\longrightarrow H^1(U^{\an},F^{\an})
$$
is injective.
\end{theorem}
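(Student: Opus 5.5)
The plan is to read $H^1(U,F)$ as the group of extensions of $\cO_U$ by $F$, and to prove injectivity by showing that an analytic splitting of such an extension forces an algebraic one. The key inputs are a Hartogs-type extension of sections of reflexive sheaves across the codimension-two boundary, followed by proper GAGA on $X$.

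\textbf{Step 1: extensions.} Since $U$ is a variety, $\Ext^1_U(\cO_U,F)=H^1(U,F)$, and the same holds analytically on $U^{\an}$. Analytification is exact, so the map $H^1(U,F)\to H^1(U^{\an},F^{\an})$ sends the class of an algebraic extension
$$
 0\longrightarrow F\longrightarrow E\overset{p}{\longrightarrow}\cO_U\longrightarrow0
$$
to the class of $0\to F^{\an}\to E^{\an}\to\cO_{U^{\an}}\to0$. Here $E$ is again a vector bundle. Suppose this image vanishes. Then there is a holomorphic section $s\in H^0(U^{\an},E^{\an})$ with $p^{\an}(s)=1$. It suffices to show that $s$ is algebraic, because an algebraic section $\sigma$ of $E$ with $p(\sigma)=1$ splits the algebraic extension.

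\textbf{Step 2: a reflexive extension to $X$.} Let $j\colon U\hookrightarrow X$ be the inclusion. Choose any coherent extension $\bar E$ of $E$ to $X$, which exists by the standard extension of coherent sheaves from open subschemes. Set $\cG:=\bar E^{\vee\vee}$. This is a coherent reflexive sheaf on the normal variety $X$ with $\cG|_U=E$, because $E$ is locally free. Since $\operatorname{codim}_X(X\setminus U)\ge2$ and reflexive sheaves on normal varieties satisfy $S_2$, we get $j_*E=\cG$; in particular $H^0(U,E)=H^0(X,\cG)$. Dualising commutes with analytification, because $\cO^{\an}$ is flat over $\cO$. Hence $\cG^{\an}$ is a reflexive coherent analytic sheaf on the normal complex space $X^{\an}$ whose restriction to $U^{\an}$ is $E^{\an}$.

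\textbf{Step 3: Hartogs extension (main obstacle).} The crux is the statement that sections of $\cG^{\an}$ over $U^{\an}$ extend across the analytic subset $X^{\an}\setminus U^{\an}$, which has codimension at least two. I would prove it locally. Write $\cG^{\an}=\mathcal{H}om(\cG^{\an\vee},\cO)$ and take a local presentation $\cO^m\to\cO^n\to\cG^{\an\vee}\to0$. This realises $\cG^{\an}$ locally as the kernel of a map $\cO^n\to\cO^m$. Sections of $\cO$ extend across codimension-two analytic subsets of a normal complex space by the second Riemann extension theorem. The extended components still satisfy the kernel equations, by the identity theorem on the dense set $U^{\an}$. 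The extension is unique because $\cG^{\an}$ is torsion free. This yields $\tilde s\in H^0(X^{\an},\cG^{\an})$ extending $s$.

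\textbf{Step 4: GAGA and conclusion.} Since $X$ is projective, Serre's GAGA gives $H^0(X,\cG)\simeq H^0(X^{\an},\cG^{\an})$. Hence $\tilde s$ is the analytification of an algebraic section, and its restriction $\sigma\in H^0(U,E)$ satisfies $\sigma^{\an}=s$. Then $p(\sigma)$ is a regular function whose analytification is $1$, so $p(\sigma)=1$ because $\cO_U\to\cO_{U^{\an}}$ is injective. Thus $\sigma$ splits the algebraic extension, and the class we started from vanishes in $H^1(U,F)$. This proves the injectivity claimed in \cref{thm:intro-gaga}.
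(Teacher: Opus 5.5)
Your proposal is correct and follows essentially the same route as the paper's proof. Both represent the class by an extension with middle bundle $E$ and identify $j_*E$ with the reflexive hull of a coherent extension. Both then use flatness of analytification, a local kernel presentation and normal Hartogs extension to extend the holomorphic splitting section across the boundary, algebraize it by proper GAGA, and conclude by checking that it maps to $1$.
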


Applying the theorem to $\Hom(E_2,E_1)$, for algebraic
vector bundles $E_1,E_2$ on $U$, gives an injection
$$
 \Ext^1_U(E_2,E_1)
 \longrightarrow
 \Ext^1_{U^{\an}}(E_2^{\an},E_1^{\an}).
$$
Thus an algebraic extension that admits a holomorphic splitting
already admits an algebraic splitting. This comparison applies
to arbitrary vector bundles under the stated compactification
hypothesis. Its proof extends the middle bundle of an
extension as a coherent reflexive sheaf on the compactification.
Analytic Hartogs extension and degree-zero proper GAGA then
algebraize any holomorphic section defining a splitting.

The required compactification geometry is supplied by the
Torelli morphism. For every $g\geq3$, both spin parities,
and full level $n\geq3$, we construct a normal projective
compactification of each connected component of the level
cover of the rigidified spin stack, with complement of
codimension at least two
(Proposition~\ref{prop:small-compactification}).
Here rigidification removes the central scalar automorphism
of the spin line. {We normalise the closure of the coarse
Jacobian locus in the Satake compactification, taking the full
function field of the chosen spin-level component.  This retains
the spin and level data while using only the boundary geometry
of the coarse Jacobian locus.}

Descent through the scalar spin gerbe and finite \'etale trace
then give a comparison theorem on the ordinary spin stack
itself: analytification is injective on $H^1$ for every
algebraic vector bundle
(Proposition~\ref{prop:spin-comparison}).  In particular, an algebraic
extension of vector bundles on the spin stack splits whenever
its analytification splits.  Applying the comparison to the
primary-obstruction bundle, and identifying the algebraic and
holomorphic primary classes under analytification, proves
holomorphic nonvanishing.

The theorem also has consequences for superstring perturbation
theory, where amplitudes are defined by integration over
supermoduli \cite{WittenPerturbation}. In the genus-two even sector, the construction of D'Hoker--Phong uses the super period matrix to integrate
out the odd moduli \cite{DHP2,DHP6}.
For $g\geq3$, our result rules out a global holomorphic
projection onto spin moduli as the basis for an analogous
construction. This obstruction does not, by itself, decide
the validity of the modular-form candidates for chiral
measures proposed by Cacciatori--Dalla Piazza--van Geemen
and Grushevsky \cite{CDG3,CDG4,Grushevsky};
relating these candidates to integration on supermoduli
remains a separate question.

\medskip

The paper is organised as follows.
Section~\ref{sec:transverse} develops primary-class functoriality
and proves the transverse extension theorem, including its
equivariant formal consequence.
Section~\ref{sec:boundary} constructs the common core and the
two {elliptic-tail constructions}, and identifies their central class.
Section~\ref{sec:vanishing} proves the obstruction vanishings
and constructs the formal smoothing.
Section~\ref{sec:Laurent} establishes survival under Laurent
localisation, and Section~\ref{sec:algebraic} deduces algebraic
non-projectedness on the smooth locus.
Section~\ref{sec:gaga} proves
Theorem~\ref{thm:intro-gaga}.
Finally, Section~\ref{sec:compactification} constructs the
spin-level compactifications, proves the required descent and
completes the proof of Theorem~\ref{thm:intro-main}.
Figure~\ref{fig:proof-flow} records the principal dependencies.
The upper chain produces the nonzero algebraic class; the lower
chain proves that it remains nonzero after analytification.
They meet in the final holomorphic nonvanishing theorem.

\begin{figure}[!htbp]
\centering
\begin{tikzpicture}[
 result/.style={draw=black!45,fill=black!2,rounded corners=2pt,
   text width=29mm,minimum height=12mm,inner sep=4pt,
   align=center,font=\small},
 conclusion/.style={result,draw=blue!55!black,fill=blue!5},
 dependency/.style={-{Stealth[length=2mm]},semithick,draw=black!65},
 connection/.style={semithick,draw=black!65}]
\node[result] (transverse) at (0,0)
 {\textbf{Proposition~\ref{prop:transverse-extension}}\\
  {\footnotesize Transverse extensions}};
\node[result] (formal) at (3.8,0)
 {\textbf{Theorem~\ref{thm:formal-map}}\\
  {\footnotesize Formal smoothing}};
\node[result] (laurent) at (7.6,0)
 {\textbf{Theorem~\ref{thm:Laurent}}\\
  {\footnotesize Laurent survival}};
\node[conclusion] (algebraic) at (11.4,0)
 {\textbf{Theorem~\ref{thm:algebraic}}\\
  {\footnotesize Algebraic\\nonvanishing}};
\draw[dependency] (transverse) -- (formal);
\draw[dependency] (formal) -- (laurent);
\draw[dependency] (laurent) -- (algebraic);

\node[result] (gaga) at (0,-1.8)
 {\textbf{Theorem~\ref{thm:GAGA}}\\
  {\footnotesize Comparison in codimension two}};
\node[result] (compactification) at (3.8,-1.8)
 {\textbf{Proposition~\ref{prop:small-compactification}}\\
  {\footnotesize Small spin-level compactifications}};
\node[result] (spin) at (7.6,-1.8)
 {\textbf{Proposition~\ref{prop:spin-comparison}}\\
  {\footnotesize Comparison on the spin stack}};
\node[conclusion] (holomorphic) at (11.4,-1.8)
 {\textbf{Theorem~\ref{thm:main}}\\
  {\footnotesize Holomorphic nonvanishing}};
\coordinate (joint) at (5.7,-1.8);
\draw[connection] (gaga.south) -- (0,-2.8) -- (5.7,-2.8) -- (joint);
\draw[connection] (compactification.east) -- (joint);
\fill[black!65] (joint) circle (1.3pt);
\draw[dependency] (joint) -- (spin.west);
\draw[dependency] (spin) -- (holomorphic);
\draw[dependency] (algebraic.south) -- (holomorphic.north);
\end{tikzpicture}
\caption{The two chains of the proof.  The upper chain also uses
the central construction of Section~\ref{sec:boundary} and the
cohomological vanishings in Sections~\ref{sec:vanishing}
and~\ref{sec:Laurent}; the lower chain includes gerbe and
finite-level descent.}
\label{fig:proof-flow}
\end{figure}
\FloatBarrier

\section*{Acknowledgements}

M.C. is partially supported by the Universit\`a degli Studi di Bari and by the PRIN 2022MWPMAB -- \emph{Interactions between Geometric Structures and Function Theories}.

The research of R.D. was partially supported by NSF grant DMS--2401422,
\emph{Geometry and Strings}, and by NSF grant DMS--2244978,
\emph{FRG: New Birational Invariants}.

S.N. acknowledges support by the Deutsche Forschungsgemeinschaft (DFG, German Research Foundation) under project number 517493862 (\emph{Homological Algebra of Supersymmetry: Locality, Unitary, Duality}).
S.N. thanks Sergio Cacciatori, Johannes Walcher,
and Heidelberg University, where he began working on
the problem of non-projectedness of supermoduli spaces
in 2024. 

An adversarial critique by ChatGPT helped eliminate some inaccuracies and allowed simplifications of some of the proofs.

\section{Primary classes and transverse extensions}
\label{sec:transverse}

We begin with the two general tools used in the construction:
compatibility of primary classes under a morphism, and extension of
an immersed test map transversely to a divisor.  The first identifies
the class to be detected; the second will {extend the test map away from the boundary}.

\subsection{Conventions and functoriality}\label{subsec:primary-conventions}

We first specify the coefficient bundle, namely the sheaf in
whose first cohomology the primary obstruction takes values.
We then establish the compatibility identity that compares a
known source class with the pullback of the target class.  This
identity will be used first on the boundary and then on its formal
neighbourhood.

All superstacks and morphisms are algebraic over $\C$, unless the
analytic category is explicitly mentioned.  For algebraic
Deligne--Mumford superstacks we use the conventions of
\cite[Section~2.1]{FKP}; the corresponding complex-analytic
foundations are developed in \cite[Section~3]{CodogniViviani}.
We use $\mathbb Z/2\mathbb Z$-gradings, with $\bar0$
denoting even parity and $\bar1$ odd parity.  For a graded
sheaf $\cE$, the subsheaf $\cE_{\bar0}$ consists of
its even sections.  For a homogeneous function or section $a$,
the notation $|a|$ denotes its parity (not an absolute value).
The notation $|Z|$, when applied to a superscheme, instead
denotes its underlying topological space.

Let $\mathfrak X$ be a smooth Deligne--Mumford superstack,
let $X=\mathfrak X_{\red}$, and denote the canonical closed
immersion by $\iota_X:X\hookrightarrow\mathfrak X$.
We say that $\mathfrak X$ is \emph{projected} if it admits
a retraction to its reduction: a morphism
$$
 \pi:\mathfrak X\longrightarrow X,
 \qquad \pi\circ\iota_X\simeq\Id_X.
$$
For a stack the displayed identification is a specified
$2$-isomorphism.  This definition is taken in the algebraic
or analytic category under consideration.

Let $\cJ_{\mathfrak X}$ be the ideal generated by odd
sections.  We use the tangent-bundle conventions
$$
 T_+\mathfrak X:=T_X,
 \qquad
 T_-\mathfrak X:=
 \left(\cJ_{\mathfrak X}/\cJ_{\mathfrak X}^2\right)^\vee.
$$
The primary obstruction to projectedness is represented on an \'etale
algebraic atlas by the terms quadratic in the odd variables in the
even transition functions.  Thus
$$
 \omega_2^{\rm alg}(\mathfrak X)
 \in
 H^1\!\left(
  X,
  \Hom\!\left(\bigwedge^2T_-\mathfrak X,T_+\mathfrak X\right)
 \right).
$$
Here and below cohomology on a stack is taken on its
lisse--\'{e}tale site.  The primary class vanishes whenever
$\mathfrak X$ is projected; see \cite{Green,DWAtiyah}.
For a recent treatment of splitting obstructions through formal
moduli problems, see also \cite{CorreaNojaFormal}.

For $g\geq2$ and $\eps\in\{+,-\}$, we write
$\M_g^\eps$ for the smooth Deligne--Mumford superstack
parametrising families of smooth \emph{unpunctured} super Riemann
surfaces of genus $g$ with even or odd spin parity, respectively.
Its reduction is the spin stack $\mathcal S_g^\eps$, whose
objects are smooth curves with a theta characteristic
$(C,L,L^{\otimes2}\simeq K_C)$ of the chosen parity.
For super Riemann surfaces and their deformation theory, see
\cite[Section~2]{WittenNotes}; for the algebraic moduli superstack
and its stable compactification, see \cite[Theorem~A]{FKP}.

We shall repeatedly use the arbitrary-morphism compatibility of the
primary class.  If $f:\mathfrak Y\to\mathfrak X$ is a morphism of
smooth superstacks, {write $Y=\mathfrak Y_{\red}$ and
$X=\mathfrak X_{\red}$, and put}
\begin{equation} \label{mixed}
 \cG_f:=
 \Hom\!\left(
  \bigwedge^2T_-\mathfrak Y,
  f_{\rm red}^*T_+\mathfrak X
 \right).
\end{equation}
The differentials of $f$ induce
\begin{align*}
 j_f:
 \Hom\!\left(\bigwedge^2T_-\mathfrak Y,T_+\mathfrak Y\right)
 &\longrightarrow\cG_f,
 &\alpha&\longmapsto df_+\circ\alpha,\\
 \iota_f:
 f_{\rm red}^*\Hom\!\left(
  \bigwedge^2T_-\mathfrak X,T_+\mathfrak X
 \right)
&\longrightarrow\cG_f,
&\beta&\longmapsto\beta\circ\bigwedge^2df_-.
\end{align*}
The compatibility identity is
\begin{equation}\label{eq:primary-naturality}
 H^1(j_f)\bigl(\omega_2(\mathfrak Y)\bigr)
 =
 H^1(\iota_f)\bigl(f_{\rm red}^*\omega_2(\mathfrak X)\bigr)
 \quad\text{in }H^1(Y,\cG_f).
\end{equation}
This identity follows by applying the chain rule to the terms quadratic in the odd variables; 
cf.\ \cite[Corollary~2.12]{DW}.

We emphasise that identity \eqref{eq:primary-naturality} holds for arbitrary morphisms $f$.  Choose compatible
\'etale charts and local split coordinates for source and target.
Let $c_{\mathfrak Y}$ and $c_{\mathfrak X}$ be the
\v{C}ech $1$-cocycles of quadratic-odd terms in the even
transition functions representing $\omega_2(\mathfrak Y)$
and $\omega_2(\mathfrak X)$, respectively.  In each source
chart, write the even part of $f$, modulo terms of odd degree
four, as $f_{\red}+b_{f,i}$, where $b_{f,i}$ is
quadratic in the source odd coordinates.  These local terms form
a \v{C}ech $0$-cochain $b_f=(b_{f,i})$ with values in
$\cG_f$.  Comparing the two expressions for $f$ on an
overlap in odd degree two gives
$$
 df_+\,c_{\mathfrak Y}
 -f_{\red}^*c_{\mathfrak X}\circ\bigwedge^2df_-
 =\delta b_f,
$$
with a simultaneous choice of sign for the cocycles.  Here
$\delta$ is the \v{C}ech coboundary, and the target cocycle
is pulled back to the chosen source cover.  Passing to cohomology
gives \eqref{eq:primary-naturality}.  The calculation respects
the comparison arrows on an \'etale groupoid and therefore applies
to smooth Deligne--Mumford superstacks.

\subsection{The extension problem}\label{subsec:extension-problem}

The boundary of stable supermoduli provides the starting point
for our nonvanishing argument.  In Section~\ref{sec:boundary}
we will construct a morphism $f:Z\to D$, where $D$ is
a smooth part of the separating Neveu--Schwarz boundary and
$Z$ is a smooth superscheme supported on a proper curve.
The compatibility identity \eqref{eq:primary-naturality} will
identify a nonzero comparison class on this source.  At that
stage the test map $f$ takes values entirely in the boundary;
it does not yet detect the class on the moduli of smooth
supercurves.  We call $Z$ the \emph{test superscheme} or
\emph{test source}, and reserve \emph{test curve} for its ordinary
reduction $C$.  The odd directions of $Z$ are essential
for the comparison of primary classes.

To reach the smooth locus, we extend both $Z$ and its map
in one even direction transverse to $D$.  Successive
extensions modulo increasing powers of a local boundary
parameter will produce a formal neighbourhood of $Z$ mapping
to the ambient superstack.  Inverting that parameter will then
give a map to the complement of $D$.  We shall still have
to prove that the comparison class survives this inversion;
that is the purpose of Sections~\ref{sec:Laurent}
and~\ref{sec:algebraic}.

This subsection isolates the extension problem needed for that
construction.  We fix the initial morphism $f:Z\to D$ but
allow the source thickening to be constructed together with the
extended map.  
Consequently, changes of local parametrisation are
identified when they give isomorphic source--map pairs.  The
coefficient sheaf for extending such a pair is therefore obtained
by quotienting tangential map corrections by source tangent
directions, and then tensoring with the appropriate power of the
conormal line.  These extension obstructions are distinct from
the primary class of Subsection~\ref{subsec:primary-conventions}:
they govern the construction of a transverse family on which
that class can be tested.
We first prove the one-order extension statement, identify its first-order
class, and then obtain the equivariant formal existence criterion
applied in Section~\ref{sec:vanishing}.

Let $D\subset\mathfrak X$ be a smooth effective even Cartier
divisor in a smooth separated Deligne--Mumford superstack of finite
type over $\C$.  Let $f:Z\to D$ be a morphism from a smooth
superscheme with proper reduced curve $C$, and assume that
$$
 df:T_Z\longrightarrow f^*T_D
 \quad\text{is a locally split injection}.
$$
Set
$$
 Q_f=\operatorname{coker}(df),\qquad
 A_f=f^*(\cI_D/\cI_D^2),\qquad
 \cM_r=(Q_f\otimes A_f^r)_{\bar0}\quad(r\geq1).
$$
Here $Q_f$ and $A_f$ are, respectively, a super vector
bundle and an even line bundle on the \emph{full} superscheme
$Z$.  The sheaf $\cM_r$ is the \emph{coefficient sheaf}
for the order-$r$ extension obstruction: local differences of
extensions are its sections, and their gluing obstruction will
lie in $H^1(Z,\cM_r)$.  Its even part is taken with respect
to total parity.  In particular, a product of an odd coefficient
function and an odd local section of $Q_f$ is retained;
taking the even part does not set odd functions equal to zero.
Cohomology of $\cM_r$ is the even part of the graded
coherent cohomology of $Q_f\otimes A_f^r$; equivalently,
it is computed on the ordinary scheme
$(|Z|,\cO_{Z,\bar0})$.  Thus this coefficient is not
replaced by its restriction to $C$.

Local descriptions of $f$ in a target atlas will always mean
the following.  Choose an \'etale atlas $V\to\mathfrak X$
and an \'etale superscheme neighbourhood $U$ of a chosen
point of $Z\times_{\mathfrak X}V$.  Projection gives
$\widetilde f:U\to V$, together with its comparison
$2$-isomorphism to the original map into $\mathfrak X$.
We call this a \emph{chosen local lift}.  All subsequent local
coordinate constructions are made near this specified lift.

\begin{definition}[Transverse extensions]\label{def:transverse}
Write $\iota_D:D\hookrightarrow\mathfrak X$, put
$Z_{[0]}=Z$, and regard $f_{[0]}=\iota_D\circ f$
as a morphism to $\mathfrak X$.  For $r\geq1$, an
\emph{order-$r$ transverse extension} of $f$ consists
of a superscheme $Z_{[r]}$, a closed immersion
$i_r:Z\hookrightarrow Z_{[r]}$, and a morphism
$f_{[r]}:Z_{[r]}\to\mathfrak X$, with a specified
$2$-commutative square
$$
\begin{tikzcd}[column sep=large,row sep=large]
 Z\arrow[r,hook,"i_r"]\arrow[d,"f"']&
 Z_{[r]}\arrow[d,"f_{[r]}"]\\
 D\arrow[r,hook,"\iota_D"']&\mathfrak X.
\end{tikzcd}
$$
We require this square to be $2$-Cartesian.  Explicitly, the
specified comparison
$f_{[r]}\circ i_r\simeq\iota_D\circ f$ must induce
an isomorphism
$$
 Z\xrightarrow{\sim}Z_{[r]}\times_{\mathfrak X}D.
$$
The adjective $2$-Cartesian records that this is a fibre
product of stacks.  For a superscheme $T$, an object of
$Z_{[r]}\times_{\mathfrak X}D$ is a triple
$$
 (a:T\to Z_{[r]},\ b:T\to D,\
   \varphi:f_{[r]}\circ a\xRightarrow{\sim}\iota_D\circ b).
$$
The isomorphism $\varphi$ is part of the datum; see
\cite[Tag~0040]{StacksProject}.  For superschemes viewed as
stacks this recovers the ordinary Cartesian fibre product.
Moreover, $\iota_D$ is a representable closed immersion,
so its base change here is the closed subsuperscheme cut out by
the pulled-back ideal of $D$.  Thus the displayed condition
identifies the entire inverse-image subsuperscheme with $Z$,
including its odd functions and nilpotent structure.

Let $\cI_{[r]}=f_{[r]}^{-1}\cI_D\cdot\cO_{Z_{[r]}}$.
\'Etale-locally on $Z$, we further require an isomorphism of
augmented superalgebras with ideals
$$
 (\cO_{Z_{[r]}},\cI_{[r]})
 \simeq (\cO_Z[t]/(t^{r+1}),(t)),
 \qquad |t|=\bar0,
$$
inducing the given identification with $\cO_Z$ modulo
$\cI_{[r]}$.  
Here $|t|=\bar0$ says that the new
parameter $t$ is even.  On a chosen \'etale-local lift to
a target atlas, as described above, the morphism is an immersion
and a local equation of $D$ pulls back to the generator
$t$.  Transversality means this identification of the new
even direction with the boundary-normal direction.

To extend a \emph{given} order-$(r-1)$ object, we also specify
an identification
$$
 (|Z|,\cO_{Z_{[r]}}/\cI_{[r]}^r)
 \xrightarrow{\sim}Z_{[r-1]},
$$
together with a compatible $2$-isomorphism between the
restricted map and $f_{[r-1]}$.  All isomorphisms of
extensions must respect these identifications.  In particular,
we keep the preceding-order extension fixed, but choose the
next source thickening as part of the extension.
\end{definition}

The parameter $t$ and the product presentations in
\cref{def:transverse} are local choices.  They need not glue
to a global generator of the boundary ideal or to a global
retraction $Z_{[r]}\to Z$.  The source thickening is
constructed together with its map to $\mathfrak X$.

For $r\geq1$, transversality identifies the conormal line
$\cI_{[r]}/\cI_{[r]}^2$ with $A_f$, although
$\cI_{[r]}$ itself is not a Cartier ideal.  Indeed, on a
local product chart with coefficient superalgebra $B$,
$$
 R_r=B[t]/(t^{r+1}),\qquad I_r=(t),\qquad
 t\cdot t^r=0\quad\hbox{in }R_r.
$$
Thus $t$ is a zero divisor at finite order.  In the compatible
formal limit the corresponding local ring and ideal are
$$
 \widehat R=B[[t]],\qquad I=(t),\qquad
 \widehat R\xrightarrow{\;\cdot t\;}I\text{ is an isomorphism}.
$$
Multiplication by $t$ is injective in $B[[t]]$, even if
$B$ has nilpotents, as follows by comparing coefficients of
power series.  Hence $I$ is an effective even Cartier ideal.
The proof of \cref{cor:equivariant-formal} constructs these
compatible product presentations on fixed affine open subsets of
the central superscheme.

\Needspace{11\baselineskip}
\begin{proposition}[Transverse extension with variable source]
\label{prop:transverse-extension}
Fix $r\geq1$ and an order-$(r-1)$ transverse extension.
There is a canonical obstruction
\begin{equation}\label{eq:transverse-obstruction}
 \operatorname{ob}_r(f_{[r-1]})\in
 H^1(Z,\cM_r)
 =H^1\bigl(Z,(Q_f\otimes A_f^r)_{\bar0}\bigr).
\end{equation}
An order-$r$ extension exists if and only if this class vanishes.
When it exists, the set of its isomorphism classes is a torsor
under $H^0(Z,\cM_r)$.  An automorphism inducing the identity
on the specified order-$(r-1)$ object is the identity.
\end{proposition}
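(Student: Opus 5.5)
The argument is the standard local-to-global obstruction calculus for deformations of pairs (source, map). The twist is that the source thickening is variable and the map is required to be transverse. There are three steps: show that order-$r$ extensions exist étale-locally on $Z$; show that the differences between any two local extensions form a sheaf torsor under $\cM_r$; and show that local extensions have no nontrivial automorphisms. Once these are in place, the usual Čech argument gives the obstruction class in $H^1(Z,\cM_r)$, the $H^0(Z,\cM_r)$-torsor statement, and rigidity.

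\emph{Step 1 (local existence).} Work on a chosen local lift $\widetilde f:U\to V$. By hypothesis $df$ is a locally split injection into $f^*T_D$, so after shrinking, $\widetilde f$ is a closed immersion. We then choose étale coordinates on $V$ of the form $(x,y,s)$ with the following properties: $s$ is an even local equation of $D$; $y$ is the pullback of coordinates on $U$, split into even and odd parts; and $x$, also even and odd, cuts out $U$ inside $D$. Any order-$(r-1)$ extension is locally $\cO_U[t]/(t^r)$ with $s\mapsto t$, by \cref{def:transverse}. An order-$r$ lift is then obtained by taking $\cO_U[t]/(t^{r+1})$, sending $y\mapsto y$ and $s\mapsto t$, and lifting the images of $x$ arbitrarily. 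So local extensions exist. The $2$-Cartesian condition holds because pulling back $(s)$ gives $(t)$, and the quotient by $(t)$ recovers $U$ itself.

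\emph{Step 2 (differences).} Two local order-$r$ extensions agree modulo $t^r$. I will first normalise the source. Any two source thickenings $\cO_U[t]/(t^{r+1})$ that restrict to the fixed order-$(r-1)$ object differ by an automorphism equal to the identity modulo $t^r$. Such an automorphism is $\Id+t^r\xi$, with $\xi$ a derivation of $\cO_Z$, together with a correction $t\mapsto t+t^r c$. The map $f_{[r]}^\#$ is likewise determined up to adding $t^r\eta$, where $\eta$ is an even section of $f^*T_{\mathfrak X}\otimes A_f^{r}$. Transversality fixes the normal component of $\eta$: since $s$ must pull back to a generator of $\cI_{[r]}$, the normal part can be absorbed by rescaling $t$. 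Hence $\eta$ lies in $f^*T_D\otimes A_f^r$, up to the $c$-correction. Reparametrising the source by $\xi$ changes $\eta$ by $df(\xi)$. The set of isomorphism classes of local extensions is therefore a torsor under $(\operatorname{coker}(df)\otimes A_f^r)_{\bar0}=\cM_r$. Here I must check that the isomorphism $t^r\cO_{Z_{[r]}}\simeq A_f^r$ is canonical, which is what makes the torsor glue, and that total parity is what matters.

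\emph{Step 3 (rigidity and gluing).} An automorphism of a local extension that is the identity modulo $t^r$ is of the form $\Id+t^r\xi$ and must satisfy $df(\xi)\otimes t^r=0$ in $f^*T_D\otimes A^r_f$. Injectivity of $df$, together with $t^r\cO\simeq A_f^r\otimes\cO_Z$, forces $\xi=0$. Likewise the $c$-part is forced to vanish by the comparison $2$-isomorphism with $s$, so automorphisms are trivial. Local extensions therefore form a gerbe-free sheaf of torsors under $\cM_r$ on the étale site of $Z$ (equivalently, of $|Z|$). Its class is $\operatorname{ob}_r(f_{[r-1]})\in H^1(Z,\cM_r)$, the set of global sections is nonempty exactly when that class vanishes, and when nonempty it is an $H^0$-torsor. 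Target $2$-isomorphisms are handled by working on the étale groupoid $V\times_{\mathfrak X}V$: since $\mathfrak X$ is Deligne–Mumford, comparison isomorphisms lift uniquely at each infinitesimal order.

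The main obstacle is Step 2. The delicate point is identifying the difference sheaf precisely as the cokernel twisted by $A_f^r$, and not as some extension involving the normal direction. This means showing that the normal component of $\eta$ and the reparametrisations $t\mapsto t+t^rc$ cancel exactly. I would handle this by explicitly computing the action of source automorphisms on the pair $(s\mapsto t,\ x\mapsto\cdot)$ in the coordinates above.
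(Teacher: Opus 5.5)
Your argument is correct, but Step~2 takes a different route from the paper.

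\textbf{The paper's route.} The paper removes the reparametrisation freedom altogether. After the \'etale base change $V'=V\times_{\mathbb A^{1\mid q}}Z$, it has target coordinates $(\pi_Z,w,t)$ with a retraction $\pi_Z$ onto $Z$. Projection to $(\pi_Z,t)$ then identifies every local extension canonically with $Z\times\Spec\C[t]/(t^{r+1})$, and its map with a graph $w=\Phi(z,t)$. Differences are simply $t^rh(z)$ in the complementary $w$-directions. The identification with $Q_f\otimes A_f^r$ comes from the explicit overlap law $h_j=(a_{ij}^0)^{-r}B_{ij}h_i$. Rigidity is immediate, because the graph determines the source.

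\textbf{Your route.} You keep the source automorphisms $\Id+t^r\xi$ and $t\mapsto t+t^rc$, and take the quotient. Setting $t:=f_{[r]}^\#(s)$ kills the $c$-freedom and forces the difference $\eta$ to annihilate $s$, so $\eta$ lies in $f^*T_D\otimes\cI_{[r]}^r$. The remaining freedom is the choice of a lifted section $\cO_U\to\cO_{U_{[r]}}$, which exists by smoothness of $U$, and it changes $\eta$ by exactly $df(\xi)$.

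\textbf{Comparison.} Your route gives canonicity for free. Derivations into the square-zero ideal $\cI_{[r]}^r\simeq A_f^{\otimes r}$ are intrinsic, so no coordinate-change computation is needed. The cost is the $c$-bookkeeping; for $r=1$ this is a rescaling $t\mapsto ut$ by a unit, not an additive correction. The paper's gauge fixing avoids that bookkeeping and makes both rigidity and descent of the source transparent.

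\textbf{Points to tighten.}
\begin{enumerate}
\item Making $\widetilde f$ a closed immersion needs an \'etale neighbourhood of the target, not a Zariski shrinking; this is what the paper's base change provides.
\item In Step~1, ``$y\mapsto y$'' presupposes that the order-$(r-1)$ map was already normalised this way. Otherwise, lift all coordinate images and extend by formal \'etaleness.
\item In Step~3, you should say why the glued source is a superscheme rather than an algebraic superspace. The paper does this using the fact that nilpotent algebraic-space thickenings of a scheme are schemes.
\end{enumerate}
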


We separate the local geometry from the obstruction calculation.
First, \cref{lem:etale-local-normal-form} constructs adapted
homogeneous coordinates and proves that transverse extensions
are graphs in those coordinates.  Then \cref{lem:graph-torsor}
identifies the difference between two lifts as an even section
of $Q_f\otimes A_f^r$.  We use those differences to
construct the obstruction cocycle and descend the local maps
and their sources, including the comparison $2$-isomorphisms
for a stack target.

We introduce the notation for the local description.  Since
$Z$ is smooth and $Z_{\red}=C$ is a curve, its dimension
near a chosen point is $1\mid q$.  Write
$\operatorname{rk}Q_f=a\mid b$.  For the adapted coordinates
to be constructed, we use the notation
$$
 z=(u\mid\alpha^1,\ldots,\alpha^q),
 \qquad
 w=(v^1,\ldots,v^a\mid\beta^1,\ldots,\beta^b),
 \qquad t.
$$
The tuple $z$ consists of coordinates on the source;
$w$ records the complementary directions within the divisor;
and $t$ is the even transverse parameter.  Thus
$u,v^1,\ldots,v^a,t$ are even, while the
$\alpha^i,\beta^j$ are odd.

All local descriptions below refer to the chosen lift to an
\'etale target atlas.  After localising, we continue to denote
the source and its lifted map by $Z$ and $f$.
The notation $\Phi(z,t)$ denotes a tuple of truncated
polynomials in $t$ with coefficients in $\cO_Z$.

\begin{lemma}[Adapted \'etale neighbourhoods and graphs]
\label{lem:etale-local-normal-form}
In the setting of \cref{prop:transverse-extension},
\'etale-locally near any point of $Z$, there is a target
neighbourhood $V\to\mathfrak X$ and an \'etale morphism
$$
 (\pi_Z,w,t):
 V\longrightarrow Z\times\mathbb A^{a+1\mid b}
$$
such that, writing $D_V=D\times_{\mathfrak X}V$,
$$
 D_V=(t=0),\qquad
 f(Z)=(w=t=0),\qquad
 \pi_Z\circ f=\Id_Z.
$$

For every $r\geq1$, projection to $(\pi_Z,t)$
identifies each local order-$r$ transverse extension with
$Z\times\Spec(\C[t]/(t^{r+1}))$.  Its map to $V$
is uniquely determined by a graph
$$
 w=\Phi(z,t),\qquad
 t^{r+1}=0,\qquad
 \Phi(z,0)=0,
$$
where each component of $\Phi$ has the parity of the
corresponding component of $w$.  Conversely, every such
graph determines a local transverse extension.
\end{lemma}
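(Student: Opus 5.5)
The plan has two parts: first build the adapted chart from the locally split injection $df$, then show that extensions are graphs by pulling back the chart functions and using the $2$-Cartesian condition.

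\emph{Construction of the chart.} Work near a point $p\in Z$. Choose an \'etale atlas $V\to\mathfrak X$ and a chosen local lift $\widetilde f:U\to V$. Since $D$ is a smooth effective even Cartier divisor, after shrinking $V$ there is an even local equation $t$ of $D_V$, and $D_V$ is smooth of dimension one less. The map $df:T_Z\to f^*T_D$ is a locally split injection. Hence $f:Z\to D_V$ is, \'etale-locally, a closed immersion of smooth superschemes of codimension $a\mid b$, by the super inverse function theorem. So I can choose even functions $v^1,\dots,v^a$ and odd functions $\beta^1,\dots,\beta^b$ on $D_V$ that cut out $f(Z)$ and whose differentials at $p$ span the conormal of $Z$ in $D_V$; these lift to $V$. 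A retraction $\pi_Z$ is obtained as follows. Source coordinates $z=(u\mid\alpha)$ on $Z$ are pulled back along $\widetilde f$; since $Z$ is smooth (so formally smooth), they extend to functions on a neighbourhood $V$ of $f(p)$. The resulting map $(\pi_Z,w,t):V\to Z\times\mathbb A^{a+1\mid b}$ has invertible differential at $f(p)$ by construction, and is therefore \'etale after shrinking. The identities $\pi_Z\circ f=\Id_Z$, $f(Z)=(w=t=0)$ and $D_V=(t=0)$ then hold by the choices made. For a stack target, the comparison $2$-isomorphism is carried along with the chosen lift, so nothing new is needed there.

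\emph{Extensions are graphs.} Let $f_{[r]}:Z_{[r]}\to V$ be a local transverse extension, lifted near the chosen point; such a lift exists because $V\to\mathfrak X$ is \'etale and $Z_{[r]}$ is a nilpotent thickening of $Z$. Consider $\psi=(\pi_Z\circ f_{[r]},\,f_{[r]}^*t):Z_{[r]}\to Z\times\Spec\C[t]/(t^{r+1})$. By the transversality requirement in \cref{def:transverse}, $f_{[r]}^*t$ generates $\cI_{[r]}$ and $t^{r+1}=0$. Moreover $\psi$ reduces to $\Id_Z$ modulo $\cI_{[r]}$, because the square is $2$-Cartesian and $\pi_Z\circ f=\Id$. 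Compare this with the local model $\cO_Z[t]/(t^{r+1})$ from \cref{def:transverse}. The map $\psi^*$ is a morphism of $\cI$-adically filtered algebras that induces the identity on associated graded pieces, since each graded piece is $\cO_Z\cdot t^k$. Hence $\psi^*$ is an isomorphism, by induction on $k$ (a nilpotent Nakayama argument). Under this identification, $f_{[r]}$ is determined by the pulled-back functions $w=\Phi(z,t)$, with parities inherited from the components of $w$. The condition $\Phi(z,0)=0$ is exactly the Cartesian condition, which says the fibre over $t=0$ is $f(Z)=(w=0)$. Uniqueness holds because $(\pi_Z,w,t)$ is \'etale: a map to $V$ lifting a given map to the target affine space and agreeing on $Z$ is unique, by the infinitesimal lifting property.

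\emph{Converse.} Given $\Phi$ with $\Phi(z,0)=0$, define the map $Z\times\Spec\C[t]/(t^{r+1})\to Z\times\mathbb A^{a+1\mid b}$ by $(z,\Phi,t)$. Lift it uniquely to $V$ through the \'etale map, using that it restricts on $t=0$ to the known lift of $f$; this is again infinitesimal lifting. The pullback of $D_V$ is $(t)$, and the fibre over it is $Z$ because $\Phi(z,0)=0$. So the square is Cartesian and the product presentation required by \cref{def:transverse} holds tautologically.

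The main obstacle I expect is the first step: producing $\pi_Z$ that is simultaneously a retraction and part of an \'etale coordinate system in the super setting, with odd coordinates. I would handle it by extending $z$ arbitrarily and then checking invertibility of the Jacobian only on the reduced point, which suffices for superschemes.
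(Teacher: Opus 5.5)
The second and third parts of your proposal are essentially the paper's argument. That covers identifying $Z_{[r]}$ with $Z\times\Spec\C[t]/(t^{r+1})$ via the filtered map $(\pi_Z\circ f_{[r]},f_{[r]}^*t)$, uniqueness of $\Phi$ by formal \'etaleness, and the converse by lifting the graph.

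\textbf{The gap is in the first step}, exactly where you expected the obstacle. Extending the source coordinates $z=(u\mid\alpha)$ to functions on $V$, and checking the Jacobian at the reduced point, only shows that $(z,w,t):V\to\mathbb A^{1\mid q}\times\mathbb A^{a+1\mid b}$ is \'etale. It does not produce a morphism $\pi_Z:V\to Z$. The reason is that $Z$ is only \'etale over $\mathbb A^{1\mid q}$ and cannot be made open in it: no \'etale neighbourhood of a point on a curve of positive genus is an open subset of $\mathbb A^1$.

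In the algebraic category such a retraction need not exist on any Zariski neighbourhood. For example, an affine plane cubic $E\setminus\{O\}\subset\mathbb A^2$ has no retraction from an open subset of $\mathbb A^2$, because every morphism from a rational variety to an elliptic curve is constant. Formal smoothness of $Z$ only retracts infinitesimal neighbourhoods.

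The same problem affects your claim that $f$ is ``\'etale-locally a closed immersion into $D_V$ by the super inverse function theorem''. That is analytic reasoning. Algebraically, a map with split injective differential is unramified, and it becomes a closed immersion only after an \'etale base change of the target. Your construction never performs that base change, so functions $v,\beta$ on $D_V$ cutting out $f(Z)$ are not yet available.

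\textbf{The missing idea is the base change used in the paper.}
\begin{enumerate}
\item Choose functions $z$ on $D_V$ whose pullbacks give an \'etale map $h:Z\to\mathbb A^{1\mid q}$.
\item Complete them by $\xi$ and $t$ to \'etale coordinates on $V$.
\item Replace $V$ by $V'=V\times_{\mathbb A^{1\mid q}}Z$, which is \'etale over $V$. Then $\pi_Z$ is the second projection, and $f'=(f,\Id_Z)$ is a lift with $\pi_Z\circ f'=\Id_Z$.
\item Set $w=\xi-\pi_Z^*(f^*\xi)$, which vanishes along $f'(Z)$.
\end{enumerate}
This gives only $f'(Z)\subset(w=t=0)$. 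To get equality, note that the inverse image of the zero section is \'etale over $Z$ and $f'$ is a section of it, hence an open immersion. You can therefore delete the closed complement from $V'$.

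With this chart the rest of your proposal goes through. In the converse you should also record that the lifted graph is an immersion (it is a section of an \'etale map over the closed graph), since \cref{def:transverse} requires it.
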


\begin{proof}
Choose an \'etale atlas $V\to\mathfrak X$ and pull back
the source.  We may then work with a morphism from a smooth
superscheme $Z$ to the smooth divisor $D_V\subset V$.
This is the super-coordinate counterpart of the ordinary
\'etale-local embedding statement recalled in
\cite[Section~1]{RydhEtale}. 

Dualising the split differential gives a locally split
surjection $f^*\Omega_{D_V}\twoheadrightarrow\Omega_Z$.
Choose homogeneous functions $z$ on $D_V$ whose
pulled-back differentials form a homogeneous basis at the
point under consideration.  After shrinking, their restrictions
give an \'etale morphism
{$h:Z\to\mathbb A^{1\mid q}$}.  Complete these functions
by homogeneous functions $\xi$ so that $(z,\xi)$
are \'etale coordinates on $D_V$.  Lift them to $V$
and choose a local equation $t$ of $D_V$.
The differentials of $(z,\xi,t)$ form a homogeneous
basis at the chosen point.  The super Jacobian criterion
therefore makes these \'etale coordinates near that point
in $V$.

Let $\pi:V\to\mathbb A^{1\mid q}$ be defined by
the lifted functions $z$, and form the Cartesian square
$$
\begin{tikzcd}[column sep=large,row sep=large]
 V'=V\times_{\mathbb A^{1\mid q}}Z
   \arrow[r]\arrow[d,"\pi_Z"']&V\arrow[d,"\pi"]\\
 Z\arrow[r,"h"']&\mathbb A^{1\mid q}.
\end{tikzcd}
$$
The upper horizontal map is \'etale, and $f$ lifts to
$f'=(f,\Id_Z):Z\to V'$.  On $V'$, subtract the
pullbacks of $f^*\xi$ from the functions $\xi$,
and denote the resulting tuple by $w$.  Translation in
these relative homogeneous coordinates shows that
$$
 (\pi_Z,w,t):V'\longrightarrow
 Z\times\mathbb A^{a+1\mid b}
$$
is \'etale and carries $f'$ to the zero section.
The inverse image of that section is \'etale over $Z$
and has the section $f'$, which is an open immersion.
The complement of its image is closed in this inverse image,
and the inverse image is closed in $V'$.  Removing that
complement from $V'$ leaves precisely the image of the
chosen section $f'$ over the zero section.  Thus this
local image is cut out by $w,t$, including the odd
components of $w$.

Now consider a transverse extension of order $r$.
Its central lift to $V'$ extends uniquely because
$V'\to V$ is formally \'etale.  Projection to
$(\pi_Z,t)$ gives a morphism
$$
 Z_{[r]}\longrightarrow Z\times\Spec(\C[t]/(t^{r+1}))
$$
which is the identity on the central superscheme.
Its map on the first conormal line is an isomorphism because
the pullback of $t$ generates the transverse ideal.
The local product condition identifies the degree-$k$
associated-graded piece with the $k$-th tensor power of
that line, for $1\leq k\leq r$.  The projection is
therefore an isomorphism on every piece of the finite ideal
filtration, hence an isomorphism on the whole structure sheaf.
The remaining functions $w$ have unique values
$\Phi(z,t)$, with $\Phi(z,0)=0$, proving the
graph description.

Conversely, the graph in the coordinate model has a unique
lift to the chosen \'etale neighbourhood extending the central
map, by formal \'etaleness across the nilpotent ideal
$(t)$.  Its map is an immersion: in the coordinate model
it is a closed immersion, and its lift is the section of the
pulled-back \'etale neighbourhood singled out by the prescribed
central lift.  The graph has the required local product structure and
inverse-image boundary.  It therefore defines a local transverse
extension.
\end{proof}

\begin{lemma}[Local graphs and their differences]\label{lem:graph-torsor}
In the setting of \cref{prop:transverse-extension}, extensions
exist locally in the \'etale topology.  Their local isomorphism
classes form a torsor under $\cM_r$.  This identification
is independent of the adapted coordinates and is natural under
isomorphisms of the given data.
\end{lemma}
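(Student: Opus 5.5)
Existence of local extensions comes straight from \cref{lem:etale-local-normal-form}. In an adapted \'etale neighbourhood $(\pi_Z,w,t)$ we extend the given order-$(r-1)$ graph $w=\Phi_{r-1}(z,t)$ by taking any lift $\Phi$ modulo $t^{r+1}$ with $\Phi\equiv\Phi_{r-1}\pmod{t^r}$. The simplest choice is the same truncated polynomial with zero $t^r$-coefficient. By the converse part of the lemma this graph is a local order-$r$ transverse extension, and it restricts to the specified order-$(r-1)$ object.

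Next we classify local extensions up to isomorphism. Any two local extensions of the fixed order-$(r-1)$ object are, by the lemma, graphs $w=\Phi$ and $w=\Phi'$ on the same model $Z\times\Spec\C[t]/(t^{r+1})$. Their difference is $t^r\psi(z)$, where $\psi$ is a tuple with the parities of the components of $w$. Under $(\pi_Z,w)$ such a tuple is an even section of $f^*T_D$ restricted to the complementary directions, tensored with $A_f^r$, with $t^r$ identified with the generator of $A_f^r$.

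An isomorphism of extensions is an automorphism of the source thickening that is the identity modulo $t^r$. It therefore has the form $z\mapsto z+t^r\,\eta(z)$, with $\eta$ an even section of $T_Z\otimes A_f^r$, and $t\mapsto t+t^r c$. The new parameter is forced, because the pullback of the local equation of $D$ is fixed up to units. Since $t^{r+1}=0$ and $\Phi(z,0)=0$, the map $t\mapsto t+t^rc$ changes $\Phi$ only by $t^r c\,\partial_t\Phi|_{t=0}\cdot t^{0}$ terms of order at least $r+1$ when $r\ge1$. The exception is $r=1$, and there the term $\partial_t\Phi(z,0)\,t\,c$ is already of order $t^2$. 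So this reparametrisation acts trivially.

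The substitution $z\mapsto z+t^r\eta$ changes $\Phi$ by $t^r\,d(f^*w)(\eta)$. In terms of the decomposition $f^*T_D\cong T_Z\oplus(\text{$w$-directions})$ given by the adapted coordinates, this is exactly adding $df(\eta)$ and projecting. Hence isomorphism classes of local lifts form a torsor under $f^*T_D\otimes A_f^r$ modulo $df(T_Z\otimes A_f^r)$, taking even parts. That quotient is $\cM_r=(Q_f\otimes A_f^r)_{\bar0}$. We must keep the odd coefficient functions of $\cO_Z$ throughout, as the statement requires.

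The same computation shows that an automorphism that is the identity on the order-$(r-1)$ object and preserves the graph must have $df(\eta)=0$. Since $df$ is injective, this gives $\eta=0$. We record this for later, because it removes automorphisms from the torsor.

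The main obstacle is independence of the adapted coordinates and naturality, which is needed for the local torsors to glue into a sheaf torsor under $\cM_r$. I would handle it by giving an intrinsic description of the difference. Two extensions $f_{[r]},f'_{[r]}$ on a common source $Z_{[r]}$ agree modulo $\cI_{[r]}^r$. Because the ideal $\cI_{[r]}^r$ has square zero and is identified with $A_f^r$ via transversality, their difference is a derivation $f^{-1}\cO_{\mathfrak X}\to\cI_{[r]}^r\simeq A_f^r$ over the central map. That is, it is an even section of $f^*T_{\mathfrak X}\otimes A_f^r$.

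This intrinsic difference has two key properties. Its component along the normal direction vanishes, because both maps pull the equation of $D$ back to a generator of $\cI_{[r]}$ and we have fixed identifications. So the difference lies in $f^*T_D\otimes A_f^r$. Changing the source identification adds an element of $df(T_Z)\otimes A_f^r$. This gives a well-defined class in $\cM_r$ with no reference to coordinates.

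It remains to compare this with the coordinate computation above, and to check that the construction respects chosen local lifts and the $2$-isomorphisms to the stack target. Different \'etale lifts differ by unique isomorphisms of the atlas groupoid, and these act compatibly on derivations, which gives naturality. The last thing to confirm is that the sign and parity conventions (total parity and the even part) match in both descriptions.
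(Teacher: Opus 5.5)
Your existence argument and the shape of the classification are right. For coordinate-independence you take a genuinely different route from the paper. However, the step dismissing $t\mapsto t+t^rc$ is wrong as written.

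\textbf{The reparametrisation of $t$.} Modulo $t^{r+1}$ one has $\Phi(z,t+t^rc)=\Phi(z,t)+t^rc\,\partial_t\Phi(z,0)$, a change of order exactly $r$. For $r=1$ the term $tc\,\partial_t\Phi(z,0)=tch$ has order $t$, not $t^2$, and it would identify the graph $th$ with $t(1+c)h$. So if such reparametrisations were admissible isomorphisms, your recipe of projecting map differences onto the $w$-directions would not be invariant.

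The correct reason is the one you mention and then abandon. An isomorphism of extensions commutes, through the chosen local lift, with the maps to $V$, so it preserves the pullbacks of $t$ and of $\pi_Z$. Hence $c=0$, and also your $\eta=0$. Isomorphisms in the model given by $(\pi_Z,t)$ are therefore the identity, which gives the rigidity statement directly rather than through an argument about preserving the graph. The graph is thus a complete invariant. This is the paper's remark that the graph determines the source together with its map, and differences of extensions are simply $t^rh(z)$ in the $w$-directions.

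\textbf{The intrinsic description.} The same point has to be made here. Being generators of $\cI_{[r]}$ does not make the two pulled-back boundary equations equal; they can differ by an element of $\cI_{[r]}^r$. You must identify the common source so that both maps pull back one local equation $u$ to the same function. Then every $au$ agrees as well, because the two pullbacks of $a$ differ by an element of $\cI_{[r]}^r$ and $\cI_{[r]}^{r}\cI_{[r]}=0$; this is the paper's observation $t_i\,\delta a_{ij}\in(t_i^{r+1})$.

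The remaining changes of identification are the derivations into $\cI_{[r]}^r$ that kill the pulled-back $u$. The conormal sequence $0\to A_f\to\Omega_{Z_{[r]}}|_Z\to\Omega_Z\to0$ identifies these with $T_Z\otimes A_f^r$, which gives exactly the quotient by $df(T_Z\otimes A_f^r)$. The comparison you leave open is then immediate: in an adapted chart the intrinsic difference is $t^rh\,\partial_w$, whose class is $h$.

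\textbf{Comparison of the two routes.} Once repaired, your route replaces the paper's explicit overlap computation with the standard torsor of lifts over a square-zero ideal. The paper derives the transition law \eqref{eq:graph-difference-transition}, after checking that re-expressing the graphs in the new chart affects the difference only at order $r+1$ (the reference graph's $z$-derivative is divisible by $t$). Your approach makes coordinate-independence and naturality automatic. The paper's calculation, in exchange, exhibits the transition matrix $(a_{ij}^0)^{-r}B_{ij}$ of $Q_f\otimes A_f^r$ directly, on the full superscheme $Z$ with its odd functions.
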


\begin{proof}
Apply \cref{lem:etale-local-normal-form}.  The coordinates
$z$ parametrise the source, $w$ are the complementary
directions within the divisor, and $t$ is normal to the
divisor.  In our curve setting the even dimension of $z$
is one.  The given order-$(r-1)$ extension is represented
uniquely by
\begin{equation}\label{eq:graphs}
 w=\Phi(z,t),\qquad t^r=0,\qquad \Phi(z,0)=0.
\end{equation}
For $r=1$, this means the central graph $w=0$.
Lifting its coefficients modulo $t^{r+1}$ gives a local
order-$r$ extension.  With one such lift as origin, every
other graph is obtained uniquely by adding
$$
 t^r h(z),\qquad |h_a|=|w_a|.
$$
Its parametrisation is immaterial: the graph determines the
source together with its map.  

We next check how the difference $t^r h(z)$ transforms on an
overlap.  This will identify its intrinsic coefficient bundle
and show that the construction does not depend on the adapted
coordinates.  Write the change of coordinates as
$$
 z_j=F_{ij}(z_i,w_i,t_i),\qquad
 w_j=G_{ij}(z_i,w_i,t_i),\qquad
 t_j=a_{ij}(z_i,w_i,t_i)t_i,
$$
where $a_{ij}$ is an even unit and
$G_{ij}(z_i,0,0)=0$.  Put
$$
 a_{ij}^0=a_{ij}|_Z,\qquad
 B_{ij}=\left.\frac{\partial G_{ij}}{\partial w_i}\right|_Z.
$$
The supermatrix $B_{ij}$ represents the induced differential
on $Q_f$.  Compare two graphs differing by $t_i^r h_i$.
Their $t_j$-coordinates differ by
$t_i\,\delta a_{ij}\in(t_i^{r+1})$, so they agree to
the order under consideration.  Their $z_j$-coordinates
may differ in order $r$.  However, the reference graph
vanishes at $t_j=0$, so its derivative in $z_j$ is
divisible by $t_j$.  Re-expressing both graphs at the same
$(z_j,t_j)$ therefore changes their difference only in
order $r+1$.  The remaining difference is
$t_i^r B_{ij}h_i$ modulo $t_i^{r+1}$.  Consequently
\begin{equation}\label{eq:graph-difference-transition}
 h_j=(a_{ij}^0)^{-r}B_{ij}h_i,
\end{equation}
where the central change of the $z$-coordinates is understood.

Equation~\eqref{eq:graph-difference-transition} is exactly the
transition rule for an even section of $Q_f\otimes A_f^r$:
the local generator $t_j^r$ of $A_f^r$ is
$(a_{ij}^0)^r t_i^r$.  The calculation holds on the full
superscheme $Z$, including its odd functions, and also holds
for $r=1$.  Subtraction of graphs is additive at order $r$, since
products of two differences vanish modulo $t^{r+1}$.
This proves the torsor assertion and its naturality.
\end{proof}

\begin{proof}[Proof of Proposition~\ref{prop:transverse-extension}]
We first construct the obstruction class from the local
extensions supplied by \cref{lem:graph-torsor}.
Choose such lifts of the given object.  On an overlap, transport
the first graph to the chart of the second and take their
difference.  By \cref{lem:graph-torsor} this is a section
$c_{ij}$ of $\cM_r$.  On triple overlaps these
differences satisfy $\delta c=0$, because transport respects
subtraction and the preceding order already satisfies descent.
Changing the local lifts changes $c$ by a coboundary.  The
class $[c]$ is $\operatorname{ob}_r(f_{[r-1]})$.
Equivalently, it is the class of the sheaf of local graph lifts,
viewed as a $\cM_r$-torsor.  This description also proves
independence of the chosen cover.  Coherent \'etale and Zariski
cohomology agree here, so its group is precisely
\eqref{eq:transverse-obstruction}.

We now show that vanishing of this torsor class produces a
global extension.  For a stack target, this requires descending
the source and checking the comparison arrows as well as
matching the graphs.
Choose an \'etale atlas $V\to\mathfrak X$, and write
$R=V\times_{\mathfrak X}V$, with source and target
projections $s,t_R:R\rightrightarrows V$.  Both are
\'etale.  First lift the source overlap charts across the
nilpotent extension; \'etale morphisms admit unique such lifts.
Given the first local graph and the comparison arrow
$\gamma_{ij}$ at the preceding order, there is then a unique
dotted arrow in
$$
\begin{tikzcd}[column sep=large,row sep=large]
 Z_{ij,[r-1]} \arrow[r,"\gamma_{ij}"] \arrow[d,hook]
   & R \arrow[d,"s"]\\
 Z_{ij,[r]} \arrow[r,"f_{i,[r]}"']
   \arrow[ur,dashed,"\widetilde\gamma_{ij}" description]
   & V .
\end{tikzcd}
$$
Its second endpoint $t_R\widetilde\gamma_{ij}$ is the
\emph{transported} first graph, whose difference from the second graph
is $c_{ij}$.  If $[c]=0$, correcting the local
graphs by a cochain makes these differences zero.

Agreeing graphs induce unique isomorphisms of their source
quotients.  On a triple overlap, the two composite comparison
arrows lift the same preceding arrow with the same first endpoint.
Formal \'etaleness of $s$ makes them equal.  The source
isomorphisms then agree as well, since each chosen lift is an
immersion.  The same argument shows that a relative automorphism
is the identity: its target arrow is the unique lift of the
identity arrow, and its source map is fixed by the immersion.
Thus the corrected graphs and comparison arrows satisfy effective descent. 

The descended source is a superscheme.  Indeed, its ordinary even
part is a nilpotent algebraic-space thickening of
$(|Z|,\cO_{Z,\bar0})$, hence is a scheme
\cite[Tag~05ZR]{StacksProject}.  Nilpotent thickenings of affine
schemes are affine \cite[Tag~06AD]{StacksProject}.
The odd structure sheaf descends with its multiplication by
quasi-coherent descent.

A global extension makes $[c]$ zero.  Conversely,
the preceding construction produces one whenever $[c]=0$.
Finally, the difference between two global extensions is a global
section of $\cM_r$, and addition of any such section to
the local graphs gives another extension.  This action is free
and transitive on isomorphism classes, as asserted.
\end{proof}

\Needspace{9\baselineskip}
\begin{lemma}[The first-order normal sequence]\label{lem:first-normal}
Put $N_f=\operatorname{coker}(T_Z\to f^*T_{\mathfrak X})$.
There is an exact sequence of super vector bundles
\begin{equation}\label{eq:first-normal-sequence}
 0\longrightarrow Q_f\longrightarrow N_f
 \longrightarrow A_f^{-1}\longrightarrow0.
\end{equation}
First-order transverse extensions correspond to even splittings
of this sequence, and its extension class is
$\operatorname{ob}_1(f)$.
\end{lemma}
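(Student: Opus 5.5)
The sequence comes from the two-step filtration $T_Z\subset f^*T_D\subset f^*T_{\mathfrak X}$; first-order extensions are then compared with splittings through the local graphs of Lemma~\ref{lem:etale-local-normal-form}, and the obstruction is matched cocycle by cocycle. Throughout, $f^*T_D$ means the pullback along $f:Z\to D$, and $f^*T_{\mathfrak X}$ the pullback along $\iota_D\circ f$.

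\textbf{Step 1: the sequence.} Because $D$ is a smooth even Cartier divisor, the conormal sequence of $D\subset\mathfrak X$ dualises to
$$
0\to T_D\to\iota_D^*T_{\mathfrak X}\to(\cI_D/\cI_D^2)^\vee\to0 .
$$
All three terms are locally free, so pulling back along $f$ keeps it exact and gives
$$
0\to f^*T_D\to f^*T_{\mathfrak X}\to A_f^{-1}\to 0 .
$$
The differential $df:T_Z\to f^*T_D$ is locally split injective by hypothesis. Dividing the middle terms of the last sequence by $T_Z$ therefore gives \eqref{eq:first-normal-sequence} by the snake lemma, since $\operatorname{coker}(T_Z\to f^*T_D)=Q_f$. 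Each term is locally free, so $N_f$ is a super vector bundle.

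\textbf{Step 2: extensions versus splittings.} Work in the adapted coordinates $(z,w,t)$ of Lemma~\ref{lem:etale-local-normal-form}. In these coordinates:
\begin{itemize}
\item $N_f$ is framed by $\partial_w$ and $\partial_t$, and the image of $Q_f$ is spanned by the $\partial_w$;
\item a first-order graph $w=t\,h(z)$ gives the even local lift $\partial_t\mapsto\partial_t+h\cdot\partial_w$ of the generator of $A_f^{-1}$, that is, an even local splitting $s$ of \eqref{eq:first-normal-sequence};
\item conversely, every even local splitting has this form for a unique $h$ with $|h_a|=|w_a|$.
\end{itemize}
So by Lemma~\ref{lem:etale-local-normal-form} local first-order extensions correspond bijectively to local even splittings. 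Two splittings differ by an even section of $\Hom(A_f^{-1},Q_f)=Q_f\otimes A_f$, which is $\cM_1$. This is the same torsor structure as in Lemma~\ref{lem:graph-torsor}, and the correspondence is compatible with it.

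\textbf{Step 3: compatibility under coordinate change.} The transition rule \eqref{eq:graph-difference-transition} with $r=1$, namely $h_j=(a^0_{ij})^{-1}B_{ij}h_i$, must coincide with the transformation of the splitting data induced by the Jacobian of the coordinate change restricted to $Z$. This is the step I expect to be the main obstacle. Two specific points need care:
\begin{itemize}
\item the $\partial_t$-component also picks up $z$-derivative terms, but these lie in $T_Z$ and are killed in $N_f$;
\item the off-diagonal block $\partial G_{ij}/\partial t_i|_Z$ must be accounted for. It is precisely this block that makes the local reference splittings $\partial_{t_i}$ fail to agree.
\end{itemize}
Granting this, the bijection of Step 2 is intrinsic. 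It therefore globalises: global first-order extensions correspond to global even splittings of \eqref{eq:first-normal-sequence}.

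\textbf{Step 4: identifying the class.} Take the reference splittings $s_i=\partial_{t_i}$ on a cover. Their differences $s_j-s_i$ are represented by $(a^0_{ij})^{-1}\,\partial G_{ij}/\partial t_i|_Z$, and these form the Čech cocycle of the extension class in $H^1(Z,\cM_1)=\Ext^1(A_f^{-1},Q_f)_{\bar0}$.

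On the other side, in the proof of Proposition~\ref{prop:transverse-extension} with $r=1$, the obstruction cocycle compares the central graphs $w_i=0$ transported between charts. Transporting $w_i=0$ into chart $j$ gives $w_j=G_{ij}(z_i,0,t_i)=t_i\,\partial_{t_i}G_{ij}|_Z$ modulo $t^2$. Its difference from $w_j=0$ is therefore the same cocycle, up to the factor $a^0_{ij}$ that converts $t_i$ into $t_j$.

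Hence $\operatorname{ob}_1(f)$ equals the extension class of \eqref{eq:first-normal-sequence}, up to a global sign convention that should be fixed once.
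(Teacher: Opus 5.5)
Your proposal is correct and follows the paper's proof: you obtain the sequence by quotienting the pulled-back normal sequence of $D$ by $T_Z$, you match a first-order graph $w=t\,h(z)$ with the lift $\partial_t+h\,\partial_w$ of the generator of $A_f^{-1}$, and you compare the two obstruction cocycles in adapted charts. The step you flag as the main obstacle is not a real gap, since its linear part is the transition rule \eqref{eq:graph-difference-transition} and its affine part is exactly your Step~4 computation: the transported central graph $w_i=0$ and the transported reference splitting $\partial_{t_i}$ both give the coefficient $(a^0_{ij})^{-1}\partial_t G_{ij}|_Z$ in chart $j$, and this is all the paper's one-line appeal to that rule amounts to.
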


\begin{proof}
The normal sequence of the smooth divisor and the differential
of $f$ give the commutative diagram with exact rows
$$
\begin{tikzcd}[column sep=small,row sep=large]
0 \arrow[r] & T_Z \arrow[r,"df"] \arrow[d,equal]
 & f^*T_D \arrow[r] \arrow[d,hook]
 & Q_f \arrow[r] \arrow[d,hook] & 0\\
0 \arrow[r] & T_Z \arrow[r] & f^*T_{\mathfrak X}
 \arrow[r] \arrow[d,two heads]
 & N_f \arrow[r] \arrow[d,two heads] & 0\\
 & & A_f^{-1} \arrow[r,equal] & A_f^{-1}. 
\end{tikzcd}
$$
Taking cokernels yields \eqref{eq:first-normal-sequence}.
In adapted coordinates, the first-order graph
$w=t h(z)$ has transverse tangent
$\partial_t+h$ modulo $T_Z$.  This is a right inverse
of $N_f\to A_f^{-1}$, and every local right inverse
arises this way.  Coordinate changes preserve this correspondence
by \eqref{eq:graph-difference-transition}.  Hence the local
right inverses glue exactly when the first-order graphs glue.
Their differences lie in
$\Hom(A_f^{-1},Q_f)_{\bar0}=\cM_1$ and give the same
\v{C}ech cocycle, identifying the two obstruction classes.
\end{proof}

\begin{corollary}[Equivariant lifting and formal extension]
\label{cor:equivariant-formal}
Suppose that a finite group $H$ acts on $Z$ and on the
pair $(\mathfrak X,D)$, and that $f:Z\to D$ is
$H$-equivariant.  These actions are understood coherently:
the action on $\mathfrak X$ preserves $D$, and the
equivariance of $f$ includes specified comparison
$2$-isomorphisms satisfying the group law.  They induce
actions on $Q_f$, $A_f$, and the obstruction groups.

Let a given order-$(r-1)$ extension be $H$-equivariant,
with its action and comparison isomorphisms restricting to
those specified for $f$.  Then
$\operatorname{ob}_r(f_{[r-1]})$ is $H$-invariant,
and its vanishing suffices for a coherent equivariant extension.

In particular, starting from the $H$-equivariant morphism
$f:Z\to D$ alone, suppose that
\begin{equation}\label{eq:all-order-criterion}
 H^1\bigl(Z,(Q_f\otimes A_f^r)_{\bar0}\bigr)^H=0
 \qquad(r\geq1).
\end{equation}
Then there is a compatible equivariant formal map
$$
 \widehat f:\widehat Z\longrightarrow\widehat{\mathfrak X}_D,
 \qquad \widehat Z\text{ locally }\Spf\cO_Z[[t]].
$$
Its pulled-back boundary ideal $\cI$ is Cartier.  The
conormal line bundle of $Z\hookrightarrow\widehat Z$
is $\cI/\cI^2\simeq A_f$, and more generally
$\cI^r/\cI^{r+1}\simeq A_f^r$ for every $r\geq1$.
\end{corollary}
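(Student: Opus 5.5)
The proof runs by induction on $r$, using Proposition~\ref{prop:transverse-extension} at each order and upgrading it to an equivariant statement. We work throughout with the $H$-action on the data $(Z,\mathfrak X,D,f)$ and its induced action on $Q_f$, $A_f$ and $\cM_r$.

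\emph{Invariance of the obstruction.} Suppose an $H$-equivariant order-$(r-1)$ extension $f_{[r-1]}$ is given. For $h\in H$, transporting $f_{[r-1]}$ by $h$ produces another order-$(r-1)$ extension of $h\cdot f$. The equivariance data identify it with $f_{[r-1]}$, compatibly with the identifications at lower orders. By the naturality clause of Lemma~\ref{lem:graph-torsor}, the local graph lifts and their \v{C}ech differences transform under $h$ by the induced action on $\cM_r$. Hence $h^*\operatorname{ob}_r(f_{[r-1]})=\operatorname{ob}_r(h\cdot f_{[r-1]})=\operatorname{ob}_r(f_{[r-1]})$. The cocycle condition for the group law follows from the coherence of the comparison $2$-isomorphisms.

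\emph{From vanishing to an equivariant extension.} If the class vanishes, Proposition~\ref{prop:transverse-extension} gives an order-$r$ extension $E$, unique up to isomorphism in an $H^0(Z,\cM_r)$-torsor $T_r$. Each $h$ sends $E$ to an order-$r$ extension $h\cdot E$ of the same $f_{[r-1]}$, so $H$ acts on $T_r$ by affine transformations whose linear part is the action on $H^0(Z,\cM_r)$. Since $H$ is finite and we work in characteristic zero, averaging gives a fixed point: choose $E$, set $c_h=[h\cdot E]-[E]\in H^0(Z,\cM_r)$, and replace $E$ by $E-\frac1{|H|}\sum_h c_h$. Equivalently, the affine obstruction lies in $H^1(H,H^0(Z,\cM_r))=0$.

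The main obstacle is to turn a fixed isomorphism class into a coherent action with isomorphisms $\phi_h:h\cdot E\xrightarrow{\sim}E$ satisfying $\phi_{gh}=\phi_g\circ g(\phi_h)$. Here the rigidity statement of Proposition~\ref{prop:transverse-extension} does the work: automorphisms inducing the identity on the order-$(r-1)$ object are trivial. So each $\phi_h$ is unique once it is required to restrict to the given order-$(r-1)$ equivariance, and both sides of the cocycle identity are isomorphisms restricting to the same lower-order data, hence they agree. This is the point where one uses that the given action restricts to the one specified on $f$. The same uniqueness handles the comparison $2$-isomorphisms for the stack target.

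\emph{Formal limit.} Starting from $f$ itself, which is the equivariant order-$0$ object, and assuming \eqref{eq:all-order-criterion}, induction produces a compatible tower of equivariant extensions $(Z_{[r]},f_{[r]})$. Their colimit is a formal superscheme $\widehat Z$ with $|\widehat Z|=|Z|$ and $\cO_{\widehat Z}=\varprojlim\cO_{Z_{[r]}}$, mapping to $\widehat{\mathfrak X}_D$. For the local form, fix an affine cover of $Z$ and adapted coordinates as in Lemma~\ref{lem:etale-local-normal-form}. By that lemma, each $Z_{[r]}$ is identified with $Z\times\Spec\C[t]/(t^{r+1})$ using the same function $t$, the pullback of a fixed local equation of $D$. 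These presentations are compatible in $r$, so $\widehat Z$ is locally $\Spf\cO_Z[[t]]$. As observed after Definition~\ref{def:transverse}, multiplication by $t$ on $\cO_Z[[t]]$ is injective, so $\cI=(t)$ is Cartier. Finally, $\cI^r/\cI^{r+1}$ is locally generated by $t^r$ and transforms by $(a_{ij}^0)^r$, exactly as in \eqref{eq:graph-difference-transition}, which gives $\cI^r/\cI^{r+1}\simeq A_f^r$.
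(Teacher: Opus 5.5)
Your proposal is correct. The equivariance part matches the paper's argument step for step: invariance of $\operatorname{ob}_r$ by naturality of the local graph lifts, averaging on the $H^0(Z,\cM_r)$-torsor, and coherence of the action from the rigidity clause of \cref{prop:transverse-extension}.

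There is one sign slip in the averaging. Since $c_{kh}=c_k+k\,c_h$, the fixed point is $E+\frac1{|H|}\sum_h c_h$, not $E-\frac1{|H|}\sum_h c_h$. Your reformulation via $H^1(H,H^0(Z,\cM_r))=0$ is the correct statement.

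You genuinely differ from the paper in how you justify the local form of the limit.

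\emph{Your route.} You use the adapted coordinates $(\pi_Z,w,t)$ of \cref{lem:etale-local-normal-form} on a fixed \'etale target chart. Projection to the same functions $(\pi_Z,t)$ then identifies every $Z_{[r]}$ with $Z\times\Spec\C[t]/(t^{r+1})$. Compatibility in $r$ holds because the lift of $f_{[r]}$ to the chart restricts to the lift of $f_{[r-1]}$, by uniqueness of infinitesimal lifts along \'etale maps. You should state this step explicitly.

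\emph{The paper's route.} It works intrinsically on a fixed Zariski affine open $\Spec B\subset Z$ trivialising $A_f$. It lifts $\Id_B$ successively to compatible sections $B\to R_r$ of the augmentations, using smoothness of $B$. It lifts a generator of $A_f|_U$ to compatible generators $t_r$ of $I_r$, via nilpotent Nakayama. Finally it checks that $B[t]/(t^{r+1})\to R_r$ is an isomorphism on the associated graded pieces.

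\emph{What each buys.} Your route is shorter, and on your charts $t$ is literally the pulled-back boundary equation. However, it gives product presentations only \'etale-locally on $Z$, since the map to the stack lifts only after \'etale base change, and it depends on a target chart. The paper's route gives presentations on arbitrary fixed affine opens of the central superscheme, independently of the target. That is the form announced after \cref{def:transverse} and invoked later, in the proof of \cref{thm:formal-map} and the sheaf computations on $C$. For the assertions of the corollary itself this makes no difference: the Cartier property of $\cI$ and the identification $\cI^r/\cI^{r+1}\simeq A_f^r$ descend from an \'etale cover.
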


\begin{proof}
The actions on $Z$, on $(\mathfrak X,D)$, and on the
preceding extension act on the local graph lifts.  Naturality
in \cref{lem:graph-torsor} therefore makes the obstruction
class $H$-invariant.
If it vanishes, \cref{prop:transverse-extension} gives a
nonempty torsor $\mathscr T_r$ of global extension classes
under the complex vector space $V_r=H^0(Z,\cM_r)$.
The prescribed action on the preceding order induces an affine
$H$-action on this torsor, with linear part its natural
action on $V_r$.  Choose $x\in\mathscr T_r$.
The affine average
$$
 \overline x=x+\frac1{|H|}\sum_{h\in H}(h\cdot x-x)
$$
is $H$-fixed, because the group permutes the finite orbit.
Choose an extension representing $\overline x$.  Each
$h$ gives an isomorphism to its transform respecting the
specified preceding order.  Such an isomorphism is unique by
\cref{prop:transverse-extension}; therefore the composites for
$h,k$ equal the isomorphism for $hk$.  This proves
coherence of the source action and of the target comparison
arrows. 

Under \eqref{eq:all-order-criterion}, the invariant obstruction
vanishes at each step.  Inductively choose equivariant extensions
with their preceding-order identifications.  We now justify the
local form of their inverse limit on one fixed affine open,
so that the product presentations are compatible at every order.

Choose an affine open superscheme $U=\Spec B\subset Z$
on which the full super conormal line $A_f$ is trivial,
and let $U_{[r]}$ be its thickening in $Z_{[r]}$.
Each $U_{[r]}$ is affine: its ordinary even scheme is
an affine nilpotent thickening, and its odd coherent algebra
is recovered from its sections.  Write
$$
 R_r=\Gamma(U_{[r]},\cO_{U_{[r]}}),\qquad
 I_r=\Gamma(U_{[r]},\cI_{[r]}),\qquad R_0=B.
$$
The maps $R_r\twoheadrightarrow R_{r-1}$ are square-zero
surjections.  The superalgebra $B$ 
(though generally not its even subalgebra)
is smooth over $\C$,
so a section $B\to R_{r-1}$ of its augmentation lifts
to a section $B\to R_r$.  Starting with $\Id_B$,
we obtain compatible sections of all augmentations $R_r\to B$.

Choose a generator of $A_f|_U$.  It determines an even
generator $t_1\in I_1$, and choose compatible even lifts
$t_r\in I_r$.  These lifts exist because the transition
map of the ideal sheaves is surjective with quasi-coherent kernel
on an affine thickening.  The class of $t_r$ generates
$I_r/I_r^2$, so nilpotent Nakayama gives $I_r=(t_r)$.
The resulting compatible homomorphisms
$$
 B[t]/(t^{r+1})\longrightarrow R_r,\qquad t\longmapsto t_r,
$$
are isomorphisms on every associated-graded piece for the finite
boundary-ideal filtrations, and hence are isomorphisms.
Consequently $\varprojlim_r R_r\simeq B[[t]]$.

Multiplication by the even variable $t$ is injective in
$B[[t]]$, even though $B$ may have odd nilpotents.  The resulting ideal is
therefore an effective Cartier ideal.  Pullback of the boundary
equation identifies the conormal line bundle
$\cI/\cI^2$ of $Z$ in $\widehat Z$ with $A_f$.
Multiplication of local generators gives the higher identifications
$\cI^r/\cI^{r+1}\simeq(\cI/\cI^2)^{\otimes r}
\simeq A_f^r$.
\end{proof}

\begin{remark}[Extensions with variable source]
\label{rem:variable-source}
The initial source $Z$, the pair $(\mathfrak X,D)$,
and the morphism $f:Z\to D$ are fixed.  At order $r$,
the previously constructed extension is also fixed, but the next
source thickening and its map are chosen together.  In
particular, their local parametrisations may change.  This is
the freedom used in \cref{lem:graph-torsor}: differences in
source tangent directions are absorbed by reparametrisation,
leaving the quotient $Q_f=f^*T_D/df(T_Z)$.  There is no
additional independent boundary-normal summand, since the local
parameter $t$ is already the pullback of the boundary
equation.  Its changes of trivialisation are recorded by the
factor $A_f^r$.

The criterion of \cref{cor:equivariant-formal} concerns
extensions with variable source.  Prescribing the higher-order
source thickenings in advance gives a different lifting problem,
whose obstructions are not determined by the preceding argument.
In Section~\ref{sec:vanishing}, we construct the source together
with the map, as required here.
For the passage from this formal extension to localised
algebraic maps, see in particular \cref{rem:formal-localisation}.
\end{remark}

\section{A common core and the two elliptic tails}
\label{sec:boundary}

Section~\ref{sec:transverse} gave a criterion for extending a
map to a smooth divisor in a transverse formal direction.  We now
construct the initial map to which that criterion will apply.
The divisor is a separating Neveu--Schwarz boundary component:
its curves have a component of genus $h=g-1$, which we call
the \emph{core}, joined at one node to an elliptic component,
which we call the \emph{tail}.  We choose an even spin structure
on the core and use the two spin parities of the tail to obtain
the two components of genus-$g$ supermoduli.

The construction has three requirements, each serving a different
part of the proof.  Its differential must be a locally split
injection, so that the tangent quotient in
\cref{prop:transverse-extension} is a super vector bundle.
It must carry a coherent finite group action: the characters of
this action will eliminate the invariant parts of the low-order
obstruction groups in
Section~\ref{sec:vanishing}.  Finally, primary-class functoriality
must give a nonzero class on the source after mapping its
coefficient into the ambient tangent bundle.  This is the
\emph{central comparison class} that will later be transported
away from the boundary.

We first choose the core and compute the deformation
representations of the tails.  We then determine the tangent
and normal bundles along the family obtained by moving the
attaching point on the core.  These calculations allow us to
construct a source with odd parameters, verify the required
differential, and identify its nonzero comparison class.
In the even case we also move the attaching point on the tail;
the reason for this choice is made explicit in
\cref{rem:wag-tail}.  The formal extension itself is constructed
in the next section.

\subsection{The core}

The core is the genus-$h$ component of the normalisation of our
nodal genus-$g$ curve, together with its spin line bundle $L$.
We initially fix this pair and let its attaching point $p$
range over the complete curve $C$.  We will subsequently
deform the supercurve in one odd direction
$\eta\in H^1(C,L^{-1})$.  The resulting pointed family
will supply the primary class used in the comparison.

We choose $L$ to be ineffective.  This condition has a
specific cohomological purpose: $H^0(C,L)=H^1(C,L)=0$
will make the map carrying the pointed-core class into the
boundary tangent bundle an isomorphism in
\cref{lem:Jh-injection}.  The positivity of
$\deg L=h-1$ will also give the higher-order vanishings
needed for formal extension.  The finite symmetry, by contrast,
will come entirely from the elliptic tail.  The following
construction provides a suitable core in every genus under
consideration.

Fix $g=h+1$ with $h\geq2$.  We choose a smooth connected
projective curve $C$ of genus $h$ and a theta characteristic
$L$ satisfying
\begin{equation}\label{eq:core}
 L^2\simeq K_C,\qquad \deg L=h-1>0,\qquad
 H^0(C,L)=H^1(C,L)=0.
\end{equation}
The vanishing of $H^0(C,L)$ is the ineffectivity condition;
that of $H^1(C,L)$ follows also from Serre duality.  In
particular the core has even spin parity.

\begin{lemma}[An ineffective even theta characteristic in every genus]
\label{lem:ineffective-theta}
For every $h\geq2$ there exists a hyperelliptic pair $(C,L)$
satisfying \eqref{eq:core}.
\end{lemma}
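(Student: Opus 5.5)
We will construct $(C,L)$ explicitly on a hyperelliptic curve, using the standard description of theta characteristics by sums of Weierstrass points. Let $C$ be hyperelliptic of genus $h$, with double cover $\pi:C\to\mathbb P^1$ branched at the $2h+2$ Weierstrass points $p_1,\dots,p_{2h+2}$, and let $g^1_2=\pi^*\cO_{\mathbb P^1}(1)$. Recall that $K_C\simeq(h-1)g^1_2$ and $2p_i\sim g^1_2$ for every $i$. For a subset $S\subset\{1,\dots,2h+2\}$ of cardinality $h+1$, put
$$
 L_S=\cO_C\Bigl(\sum_{i\in S}p_i\Bigr)\otimes(g^1_2)^{-1}.
$$
Then $\deg L_S=h+1-2=h-1$. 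Moreover $L_S^2\simeq\cO_C(\sum_{i\in S}2p_i)\otimes(g^1_2)^{-2}\simeq(h+1-2)g^1_2\simeq K_C$. So $L_S$ is a theta characteristic with $\deg L_S=h-1>0$ because $h\geq2$.

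The key step, and the only real one, is the vanishing $H^0(C,L_S)=0$. Suppose $L_S$ were effective, say $L_S\simeq\cO_C(E)$ with $E\geq0$ of degree $h-1$. Then
$$
 \sum_{i\in S}p_i\sim E+g^1_2 .
$$
Here I would use the classical fact that a reduced sum of $r\leq h+1$ distinct Weierstrass points moves in a linear system of dimension exactly $0$ when $r\le h$. For $r=h+1$ the dimension is at most $1$ and is attained only by divisors containing a $g^1_2$. The cleanest route is to compute $h^0$ directly. Any divisor $D$ of degree $d\leq 2h$ on a hyperelliptic curve has complete linear system $|D|=kg^1_2+F$, where $F$ is a base-point-free-part-free fixed divisor containing no fibre of $\pi$, and $h^0(D)=k+1$. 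The divisor $\sum_{i\in S}p_i$ contains no fibre of $\pi$, since the $p_i$ are distinct and the fibre over a branch point is $2p_i$. Hence it equals its own fixed part, and $h^0(\cO(\sum_S p_i))=1$. But $E+g^1_2$ would give $h^0\geq2$, a contradiction. Therefore $H^0(C,L_S)=0$.

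Finally, $H^1(C,L_S)\simeq H^0(C,K_C\otimes L_S^{-1})^\vee=H^0(C,L_S)^\vee=0$ by Serre duality. Hence \eqref{eq:core} holds, and in particular $L_S$ is even. The main obstacle is the hyperelliptic structure lemma, that every special linear system has the form $kg^1_2+F$. It is classical, for instance from Clifford's theorem with equality or ACGH, and I would cite it rather than reprove it. A quick alternative is Riemann--Roch combined with the observation that $\sum_{S}p_i$ has degree $h+1\leq 2h$ and is not composed with $g^1_2$.
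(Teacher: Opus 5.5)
Your construction of $L_S$ is the paper's (take $S=\{1,\dots,h+1\}$), and the checks $\deg L_S=h-1$, $L_S^2\simeq K_C$ and the final Serre-duality step are correct. The gap is in the step you yourself call the only real one, the ineffectivity. You claim that every divisor $D$ of degree $d\le 2h$ on a hyperelliptic curve satisfies $|D|=kg^1_2+F$, with $F$ fixed and containing no fibre and with $h^0(D)=k+1$. From this you deduce $h^0\bigl(\cO_C(\sum_{i\in S}p_i)\bigr)=1$. That conclusion is false. The divisor $D_0=\sum_{i\in S}p_i$ has degree $h+1$, so Riemann--Roch already gives $h^0(D_0)\ge (h+1)-h+1=2$. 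In the model $y^2=\prod(x-a_i)$, $H^0(C,\cO_C(D_0))$ contains $1$ and $w=y/\prod_{i\in S}(x-a_i)$.

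The structure theorem you have in mind holds only for complete \emph{special} linear series, not throughout the range $d\le 2h$. Indeed, if it held for $D_0$, then $h^0(D_0)\ge2$ would force $k\ge1$, hence $L_S\sim(k-1)g^1_2+F\ge0$, which is the opposite of what you want. So the observation that $E+g^1_2\in|D_0|$ would force $h^0(D_0)\ge2$ yields no contradiction. Your ``quick alternative'' via Riemann--Roch only reproduces $h^0(D_0)\ge2$. Likewise, your remark that dimension $1$ in degree $h+1$ is attained only by divisors containing a $g^1_2$ is contradicted by $D_0$ itself.

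What must be shown is that no member of the pencil $|D_0|$ contains a full fibre of $\pi$, i.e.\ $h^0(D_0-g^1_2)=0$. The paper proves this by computing a direct image. The sections $1$ and $w$ define a map $\cO_{\mathbb P^1}^{\oplus2}\to\pi_*\cO_C(D_0)$, which is checked to be an isomorphism on every fibre: at selected and unselected ramification points, at unramified points, and above $\infty$, where $w\to\pm1$. Hence $\pi_*L_S\simeq\cO_{\mathbb P^1}(-1)^{\oplus2}$, and $H^0$ and $H^1$ vanish at once.

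Your route can be repaired by applying the structure theorem where it is valid, namely to $E$, which is special since it is effective of degree $h-1$.
\begin{enumerate}
\item Write $|E|=kg^1_2+F$, with $F$ containing no fibre. Then $D_0\sim(k+1)g^1_2+F$.
\item Add $\iota^*F$ to get $D_0+\iota^*F\sim(h-k)g^1_2$, and cancel the fibres $2p_i$ that appear. This produces a divisor containing no fibre in $|mg^1_2|$, with $0\le m\le h$.
\item Since $\pi_*\cO_C\simeq\cO_{\mathbb P^1}\oplus\cO_{\mathbb P^1}(-h-1)$, one has $|mg^1_2|=\pi^*|\cO_{\mathbb P^1}(m)|$ for $m\le h$. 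So that divisor must be zero.
\item Zero forces $F=D_0$, which is impossible because $\deg F\le h-1$.
\end{enumerate}
Alternatively, you may cite the classical description of theta characteristics on hyperelliptic curves, which gives $h^0(L_S)=0$ when $|S|=h+1$.
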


\begin{proof}
Choose distinct $a_1,\ldots,a_{2h+2}\in\C$, and let
$C$ be the smooth projective completion of
$$
 y^2=\prod_{i=1}^{2h+2}(x-a_i).
$$
The function $x$ defines a finite flat double cover
$\pi:C\to\mathbb P^1$, branched at the $a_i$.
Write $p_i$ for the ramification point over $a_i$, and put
$$
 D_0=p_1+\cdots+p_{h+1},\qquad
 L=\cO_C(D_0)\otimes\pi^*\cO_{\mathbb P^1}(-1).
$$
Every $2p_i$ is a fibre of $\pi$, up to linear equivalence,
and the canonical bundle formula for the double cover gives
$K_C\simeq\pi^*\cO_{\mathbb P^1}(h-1)$.  Hence
$$
 L^2\simeq\pi^*\cO_{\mathbb P^1}(h+1-2)\simeq K_C.
$$

To prove ineffectivity, we compute a direct image explicitly.
The functions
$$
 1,\qquad w=\frac{y}{\prod_{i=1}^{h+1}(x-a_i)}
$$
are global sections of $\cO_C(D_0)$.  They define a map
between rank-two bundles
$$
 \cO_{\mathbb P^1}^{\oplus2}
 \longrightarrow\pi_*\cO_C(D_0).
$$
We check that it is an isomorphism at every fibre.  At a selected
ramification point $p_i$, choose a uniformiser $t$, so
that $x-a_i$ has order two.  The local module
$\cO_C(D_0)$ is $t^{-1}\cO_C$, and $w$ has a
simple pole with nonzero leading coefficient.  Modulo the maximal
ideal of the base, the classes of $t^{-1}$ and $1$
are a basis; therefore so are those of $w$ and $1$.
At an unselected ramification point the module is $\cO_C$,
with fibre basis $1,t$, and $w$ has a simple zero.
Again $1,w$ form a basis.  Away from ramification their
values on the two sheets are opposite and nonzero.  At the two
points above infinity these values tend to $+1$ and $-1$,
since numerator and denominator have the same pole order.
Thus the map is an isomorphism everywhere, by Nakayama's lemma.

The projection formula now gives
$$
 \pi_*L\simeq\cO_{\mathbb P^1}(-1)^{\oplus2}.
$$
Since $\pi$ is finite, the cohomology of $L$ is that of
this direct image.  Its zeroth and first cohomology vanish,
proving \eqref{eq:core}.
\end{proof}

Hyperellipticity is used only to exhibit a pair satisfying
\eqref{eq:core}; the construction below applies to an arbitrary 
core satisfying this.  Set
$$
 U_C=H^1(C,L^{-1}),\qquad W_C=H^1(C,T_C).
$$
Both $L^{-1}$ and $T_C=L^{-2}$ have negative degree.
Their zeroth cohomology vanishes, so Riemann--Roch gives
\begin{equation}\label{eq:dimensions}
 \dim U_C=2h-2,\qquad \dim W_C=3h-3.
\end{equation}
Here $U_C$ and $W_C$ are, respectively, the odd and even
infinitesimal deformation spaces of the unpointed core
supercurve.  In particular, a nonzero $\eta\in U_C$ exists
for every $h\geq2$.  More precisely,
\cref{lem:pointed-core-test} will construct a pointed family
whose primary class is $\eta$, with the odd base factor
included, and \cref{lem:Jh-injection} will show that this
class remains nonzero after applying the moving-point
differential.  Their combination in \cref{prop:central-class}
is the promised nonzero central comparison class.

\subsection{Tail characters}

Having fixed an even core, we now choose a pointed elliptic
supercurve whose spin parity will determine that of the clutched
curve.  We need more than its parity: we must know which
deformations can be produced by moving the attaching point and
how the tail automorphisms act on them.  These data will determine
both the source of the boundary test and the obstruction
coefficients in the next section.

A Neveu--Schwarz (NS) marking is a section through the smooth
locus of a family of supercurves.  In relative coordinates
$(x\mid\theta)$, it specifies an even coordinate and an
odd coordinate on the base; see \cite[Section~3.3]{DW} and
\cite[Definition~2.4]{FKP}.  We write $\M_{k,1}^\eps$
for smooth supermoduli with one such marking.  By \emph{odd
point variation} we mean changing the odd coordinate of this
section to first order while keeping the underlying supercurve
fixed.  Its image in the deformation space of the
\emph{pointed} supercurve may nevertheless vanish, because
isomorphisms of the unpointed supercurve can identify different
sections.

Let $(E,q)$ be a smooth pointed elliptic curve, with $q$
as origin, and choose a theta characteristic $M$ by
$$
 \begin{array}{c|c|c}
 \eps&M&h^0(E,M)\bmod2\\\hline
 +&\text{a nontrivial two-torsion line}&0\\
 -&\cO_E&1.
 \end{array}
$$
Define
$$
 R_E=M_q^{-1},\qquad
 U_E=H^1(E,M^{-1}(-q)),\qquad
 W_E=H^1(E,T_E(-q)),\qquad W_0=W_C\oplus W_E.
$$
The line $R_E$ parametrises odd point variations before
quotienting by isomorphisms.  The space $U_E$ parametrises
odd deformations of the pointed supercurve up to isomorphism,
and $W_E$ parametrises its even deformations; these are the
pointed tangent-space formulas of \cite[Section~3.3]{DW}.
Thus $R_E$ and $U_E$ describe different deformation
problems.  We will see that their natural map is an isomorphism
for the even tail and zero for the odd tail, although both
spaces have dimension one.

To compute that map, write $i_q:\{q\}\hookrightarrow E$.
The evaluation sequence restricts an odd superconformal vector
field to its value at the marking:
\begin{equation}\label{eq:tail-evaluation}
 0\longrightarrow M^{-1}(-q)\longrightarrow M^{-1}
 \longrightarrow (i_q)_*R_E\longrightarrow0.
\end{equation}
The kernel consists of fields vanishing at $q$.  Both
pointed infinitesimal automorphism spaces vanish:
$$
 H^0(E,T_E(-q))=H^0(E,M^{-1}(-q))=0,
$$
because the two bundles have degree $-1$.  Taking cohomology
of \eqref{eq:tail-evaluation} therefore gives
\begin{equation}\label{eq:tail-evaluation-cohomology}
 0\longrightarrow H^0(E,M^{-1})\longrightarrow R_E
 \xrightarrow{\delta_q}U_E
 \longrightarrow H^1(E,M^{-1})\longrightarrow0.
\end{equation}
Here $H^0(E,M^{-1})$ is the odd infinitesimal automorphism
space of the \emph{unpointed} supercurve, and $\delta_q$
sends a point variation to its pointed deformation class.
Indeed, lift the variation locally to superconformal vector
fields; their differences vanish at $q$ and give the
connecting cocycle in $M^{-1}(-q)$.  The last map forgets
the marking.  This sequence will also compute the characters
of the finite action that we now introduce.

Let $\iota_E:E\to E$ be inversion.  It preserves $M$,
and an isomorphism $\iota_E^*M\to M$ can be rescaled to
respect the spin isomorphism $M^2\simeq K_E$.  Choose such
a spin lift $a$.  At the fixed point $q$, the scalar
by which $a$ acts on $M_q$ has square $-1$, since
its square is the action of inversion on
$K_{E,q}$.  The automorphism $a^2$ of $M$ covers
the identity of $E$, hence is a constant scalar; its value
at $q$ shows that it is $-\Id_M$.  Consequently
$$
 A=\langle a\rangle\simeq\boldsymbol\mu_4,
 \qquad \sigma=a^2,\qquad a^2=-\Id_M
$$
has $\sigma$ equal to scalar spin parity on the tail.
Write $\rho$ for the character of $a$ on $R_E$,
so $\rho^2=-1$.  We use the induced linearisations on all
bundles, fibres and cohomology groups; dual lines carry inverse
characters.  Throughout the rest of the paper, $A$ denotes
this order-four group acting on the tail, and acting trivially
on the core.  Its subgroup generated by $\sigma$ will
separate even boundary tangent directions from the smoothing direction.

\begin{lemma}[The two tail representations]\label{lem:tail-characters}
Both $U_E$ and $W_E$ have dimension one.
The connecting map $\delta_q:R_E\to U_E$ in
\eqref{eq:tail-evaluation-cohomology}, which sends a motion of
the marking to its pointed deformation class, has the following
behaviour and characters:
\begin{equation}\label{eq:tail-table}
\begin{array}{c|c|c|c}
 \eps&\chi(R_E)&\chi(U_E)&\delta_q\\\hline
 +&\rho&\rho&\text{isomorphism}\\
 -&\rho&\rho^3=\rho^{-1}&0.
\end{array}
\end{equation}
In both cases the character of $W_E$ is trivial.
\end{lemma}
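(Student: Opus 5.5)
The plan is to read everything off the long exact sequence \eqref{eq:tail-evaluation-cohomology}, together with Riemann--Roch and Serre duality on $E$. Throughout, $a$ acts through the chosen linearisations, and the Serre duality pairing $H^1(E,F)\otimes H^0(E,K_E\otimes F^\vee)\to H^1(E,K_E)\simeq\C$ is $A$-equivariant. The last identification holds because any automorphism of $E$ acts trivially on $H^1(E,K_E)$, by canonicity of the trace, and the spin lift does not change the action on $K_E$.

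\textbf{Dimensions.} Both $M^{-1}(-q)$ and $T_E(-q)$ have degree $-1$ and no sections. Riemann--Roch on the genus-one curve $E$ then gives $h^1=1$ for each, so $\dim U_E=\dim W_E=1$.

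\textbf{Behaviour of $\delta_q$.} I would separate the two parities.
\begin{itemize}
\item For $\eps=+$, $M^{-1}$ is a nontrivial degree-zero line bundle. Hence $H^0(E,M^{-1})=H^1(E,M^{-1})=0$, and \eqref{eq:tail-evaluation-cohomology} makes $\delta_q$ an isomorphism. It is $A$-equivariant, so $\chi(U_E)=\chi(R_E)=\rho$.
\item For $\eps=-$, $M^{-1}\simeq\cO_E$. Its constant section does not vanish at $q$, so the evaluation map $H^0(E,M^{-1})\to R_E$ is an isomorphism of one-dimensional spaces. Exactness then forces $\delta_q=0$ and gives $U_E\simeq H^1(E,M^{-1})$ equivariantly.
\end{itemize}

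\textbf{Character of $U_E$ when $\eps=-$.} By Serre duality, $H^1(E,M^{-1})$ is dual to $H^0(E,K_E\otimes M)\simeq H^0(E,M^{3})$, using the spin isomorphism $M^2\simeq K_E$, which $a$ respects. The space $H^0(E,M^3)$ is one-dimensional, spanned by a nowhere-vanishing section, so evaluation at the fixed point $q$ is an $A$-equivariant isomorphism onto $M_q^{3}$. Since $R_E=M_q^{-1}$ has character $\rho$, the fibre $M_q$ has character $\rho^{-1}$, and so $M_q^3$ has character $\rho^{-3}=\rho$ (using $\rho^4=1$). Dualising gives $\chi(U_E)=\rho^{-1}=\rho^3$, as in \eqref{eq:tail-table}.

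\textbf{Character of $W_E$.} The element $a$ acts on the even bundle $T_E$ only through inversion, since $\sigma$ acts trivially on even data. By Serre duality, $W_E$ is dual to $H^0(E,K_E^{2}(q))$, which is one-dimensional. I would compute its character in one of two ways.
\begin{itemize}
\item Via the sequence $0\to T_E(-q)\to T_E\to T_{E,q}\to0$. Here $H^0(T_E)\to T_{E,q}$ is an isomorphism, so $W_E\simeq H^1(E,T_E)$, and this is dual to $H^0(E,K_E^2)$.
\item Directly: a generator of $H^0(E,K_E^{2})$ is $(dz)^2$, on which inversion acts by $(-1)^2=1$.
\end{itemize}
Either way the character is trivial.

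The only delicate step is the character bookkeeping in the odd case. One has to check that dual lines carry inverse characters, that $K_E$ versus $M^2$ is identified with its spin linearisation (so $a$ acts on $K_{E,q}$ by $\rho^{-2}=-1$, consistent with inversion), and that the trace on $H^1(E,K_E)$ is invariant. I would verify these sign conventions first, by checking the $\eps=+$ case against the equivariant isomorphism $\delta_q$.
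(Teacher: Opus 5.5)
Your proposal is correct and follows essentially the paper's route: the same case split on the evaluation long exact sequence, Riemann--Roch or degree considerations for the dimensions, and Serre duality for the characters. The only difference is cosmetic. For the odd tail you dualise against $H^0(E,K_E\otimes M)\simeq H^0(E,M^3)$ and read off $\rho^{-3}=\rho$ on the fixed fibre, and for $W_E$ you use $(dz)^2$; the paper instead factors $U_E\simeq H^1(E,\cO_E)\otimes H^0(E,M^{-1})$ and $W_E\simeq H^1(E,\cO_E)\otimes H^0(E,T_E)$, with inversion acting by $-1$ on each factor. Both computations give $\rho^3$ and the trivial character respectively.
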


\begin{proof}
Since $a$ fixes $q$, the evaluation sequence and all its
cohomology maps are equivariant.  We first apply
\eqref{eq:tail-evaluation-cohomology} to the two spin
characteristics, and then compute the even pointed tangent
space common to both cases.

\emph{Even tail.}
A nontrivial degree-zero line bundle has no section.  Serre
duality, using $K_E\simeq\cO_E$, also gives
$H^1(E,M^{-1})=0$.  Thus $\delta_q$ is an equivariant
isomorphism.  It follows that $\dim U_E=1$ and
$\chi(U_E)=\rho$.

\emph{Odd tail.}
For $M=\cO_E$, evaluation
$H^0(E,M^{-1})\to R_E$ is an equivariant isomorphism.
Hence $\delta_q=0$, and the last arrow in
\eqref{eq:tail-evaluation-cohomology} identifies
$$
 U_E\simeq H^1(E,M^{-1})
 \simeq H^1(E,\cO_E)\otimes H^0(E,M^{-1}).
$$
The second factor has character $\rho$.  Inversion acts by
$-1$ on the first, by Serre duality with $H^0(E,K_E)$.
Their product therefore has character $-\rho=\rho^3$.
Each factor is one-dimensional.  The zero connecting map also
expresses the geometric fact that a point variation can be absorbed
by a global odd infinitesimal automorphism of the unpointed curve.

It remains to compute $W_E$, the space of \emph{even}
deformations of the pointed elliptic supercurve.  Here the
adjective even refers to the parity of the tangent direction,
not to the spin parity of $M$; the calculation applies to
both tails.  The even evaluation sequence is
$$
 0\longrightarrow T_E(-q)\longrightarrow T_E
 \longrightarrow(i_q)_*T_{E,q}\longrightarrow0.
$$
Evaluation $H^0(E,T_E)\to T_{E,q}$ is an isomorphism:
a translation-invariant vector field on $E$ is determined
by its value at the origin.  Its cohomology sequence therefore
identifies
$$
 W_E\simeq H^1(E,T_E)
 \simeq H^1(E,\cO_E)\otimes H^0(E,T_E).
$$
Each factor is one-dimensional.  Inversion acts by $-1$
on $H^1(E,\cO_E)$ and on translation-invariant vector
fields, so their product has trivial character.  Scalar spin
parity acts trivially on these even spaces.  Thus
$\dim W_E=1$ and its $A$-character is trivial, as
asserted.
\end{proof}

\subsection{The boundary tangent and normal bundles}

The preceding subsection computed the deformation spaces of a
single pointed tail.  We now keep that tail fixed and let the
attaching point on the core range over $C$.  Clutching produces
a family in the separating boundary, and the tangent spaces of
the pointed components assemble into vector bundles along this
family.  We need their exact sequences as bundles, because our
comparison class lives in cohomology on the whole curve $C$.

There are two maps to understand.  Moving the core point maps
$T_C$ into the tangent bundle of the boundary; allowing the
node to smooth then includes the even boundary tangent in the
even ambient tangent bundle.  The first map will preserve the relevant
cohomology by ineffectivity of $L$.  For the second, scalar
spin parity on the tail acts trivially in even boundary tangent
directions and by $-1$ in the normal direction, giving a
canonical splitting.  Thus both passages preserve the class
we will construct.

Let $\Mbar_g^\eps$ denote the fixed-parity component of the
moduli superstack of stable supercurves constructed in
\cite[Theorem~A]{FKP}.  It contains $\M_g^\eps$ as
its smooth-curve locus.  Let
$\mathfrak X_g^\eps$ be its open substack consisting of
smooth curves and curves with exactly one separating NS node,
whose smooth normalisation has genera $h,1$ and parities
$+,\eps$. The
boundary in this open is denoted by $D=\Dsep$, with
inclusion $\iota_D:D\hookrightarrow\mathfrak X_g^\eps$.
In an adapted local
deformation chart, one even parameter smooths that NS node,
and setting it equal to zero leaves the two pointed
normalisation components; see \cite[Section~7.5]{FKP}.
Clutching $(C,L,p)$, for $p\in C$, to the fixed pointed
tail gives
\begin{equation} \label{inclusion}
 i:C\longrightarrow D_{\red}
 \longrightarrow(\mathfrak X_g^\eps)_{\red}.   
\end{equation}
 
Put
$$
 \cE_\pm=i^*T_\pm D,
 \qquad T=i^*T_+\mathfrak X_g^\eps.
$$
We also use $i$ for the composite map to the ambient reduction.
These are ordinary vector bundles on $C$, with the indicated
even or odd tangent interpretation.  Recall that
$A=\langle a\rangle\simeq\boldsymbol\mu_4$ is the spin
lift of inversion on the fixed tail, with $\sigma=a^2$
and $\chi(R_E)=\rho$.  It fixes the base $C$ and
acts on these bundles through the automorphisms of the
clutched family.

\begin{proposition}[Boundary bundles and their linearisations]
\label{prop:boundary-bundles}
With the tail action $A=\langle a\rangle$ just described,
the divisor $D$ is smooth and effective even Cartier in the
smooth Deligne--Mumford superstack $\mathfrak X_g^\eps$.
Along $i$, the tangent and normal sequences are
\begin{align}
 0&\longrightarrow L^{-1}\longrightarrow\cE_-
 \longrightarrow(U_C\oplus U_E)\otimes\cO_C
 \longrightarrow0,\label{eq:Eminus}\\
 0&\longrightarrow T_C=L^{-2}\longrightarrow\cE_+
 \longrightarrow W_0\otimes\cO_C
 \longrightarrow0,\label{eq:Eplus}\\
 0&\longrightarrow\cE_+\longrightarrow T\longrightarrow N
 \longrightarrow0,\label{eq:normal-seq}
\end{align}
where the NS normal and conormal lines are
\begin{equation}\label{eq:N-L}
 N=L^{-1}\otimes R_E,\qquad
 \cL=N^\vee=L\otimes R_E^{-1}.
\end{equation}
The group $A$ acts trivially on $\cE_+$, and its
character on $N$ is $\rho$.  In particular, the
normal sequence has the canonical $\sigma$-eigenspace splitting
\begin{equation}\label{eq:sigma-splitting}
 T=T^{\sigma=+1}\oplus T^{\sigma=-1}
   =\cE_+\oplus N.
\end{equation}
All these statements hold for both tail characteristics.
\end{proposition}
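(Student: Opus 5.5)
The proof combines the local structure of the stable supermoduli stack near a separating NS node, taken from \cite[Section~7.5]{FKP}, with the pointed deformation theory of the two components from \cite[Section~3.3]{DW}. The group-theoretic statements then follow from the characters computed in \cref{lem:tail-characters}.

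\emph{Smoothness of $D$.} Near a point of $D$, the stack $\mathfrak X_g^\eps$ is smooth of dimension $(3g-3\mid 2g-2)$. In an adapted chart there is one even node-smoothing parameter $t$, and $D=(t=0)$ is the locus where the NS node persists. Since $t$ is even and part of a coordinate system, $D$ is a smooth effective even Cartier divisor. It is the image of the clutching morphism from $\M_{h,1}^{+}\times\M_{1,1}^{\eps}$. That morphism is étale onto $D$ up to the finite automorphisms of the node gluing, so $T_\pm D$ is identified with the sum of the pointed tangent spaces of the two components.

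\emph{The sequences \eqref{eq:Eminus} and \eqref{eq:Eplus}.} The tail is held fixed, so it contributes the constant bundles $U_E\otimes\cO_C$ and $W_E\otimes\cO_C$. For the core, I take the pointed analogue of \eqref{eq:tail-evaluation} with the attaching point $p$ varying in the family. The odd pointed tangent space is $H^1(C,L^{-1}(-p))$, and its evaluation sequence is
\[
0\to H^0(C,L^{-1})=0\to L^{-1}_p\to H^1(C,L^{-1}(-p))\to U_C\to 0 .
\]
Letting $p$ vary over $C$, these sequences assemble via the diagonal on $C\times C$ into \eqref{eq:Eminus}. The even sequence is the same with $T_C$ in place of $L^{-1}$. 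Here $H^0(C,T_C)=0$ because $h\ge2$, so moving the point injects $T_{C,p}$, and this gives \eqref{eq:Eplus}. The normal line to $D$ is the NS smoothing parameter. The standard NS plumbing $x y=t^2$ identifies the normal line at the node with the tensor product of the odd tangent lines of the two branches, namely $L^{-1}_p\otimes M^{-1}_q$, with $M^{-1}_q=R_E$. Along $C$ this gives $N=L^{-1}\otimes R_E$ and its dual $\cL$.

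\emph{Characters.} The group $A$ fixes the core and acts only through the tail. On $\cE_+$, the factor $T_C$ and the summand $W_C$ carry the trivial character, and $W_E$ is trivial by \cref{lem:tail-characters}. Hence $A$ acts trivially on $\cE_+$. On $N=L^{-1}\otimes R_E$ the character is that of $R_E$, namely $\rho$. Since $\sigma=a^2$ acts on $N$ by $\rho^2=-1$ and trivially on $\cE_+$, the $\sigma$-eigenspaces of the $A$-linearised bundle $T$ split \eqref{eq:normal-seq} canonically, giving \eqref{eq:sigma-splitting}.

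\emph{Main obstacle.} I expect the hardest step to be justifying that the automorphism $a$ of the fixed tail extends coherently to an automorphism of the whole clutched family, with the trivial automorphism on the core. Part of this is checking that the induced action on the plumbing parameter is exactly the character of $R_E$, with no extra sign or square root coming from the core side of the node gluing. To settle this, I would write the NS plumbing explicitly in superconformal coordinates, using \cite[Section~7.5]{FKP}, and track how $a$ acts on the odd coordinate of the tail branch at $q$.
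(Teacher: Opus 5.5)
Your proposal is correct and takes essentially the same route as the paper. Smoothness of $D$ comes from the even NS smoothing parameter in the FKP local charts. The sequences \eqref{eq:Eminus} and \eqref{eq:Eplus} come from assembling the pointed-core evaluation sequences over the diagonal of $C\times C$ and adding the constant tail summands. The splitting comes from the $\sigma$-projectors together with the tail characters.

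The only difference is how you obtain $N=L^{-1}\otimes R_E$ and its character $\rho$: you derive them from the explicit NS plumbing relations, whereas the paper quotes the FKP formula $N=\Psi_1^{-1}\otimes\Psi_2^{-1}$, with the $\Psi$-lines restricting to $L$ and $M_q$. Your explicit check is a valid and somewhat more self-contained substitute.
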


\begin{proof}
We first justify the geometry of the divisor and identify its
tangent directions.  Smoothness of the ambient superstack follows
from \cite[Theorem~A]{FKP}.  Near a curve with one NS node,
the local deformation description gives one even smoothing
parameter as part of a smooth coordinate system.  Its vanishing
therefore defines a smooth effective Cartier divisor on our open.
Clutching the two pointed smooth components has tangent image
equal to the kernel of the differential of this parameter.
See \cite[Proposition~7.8 and
Lemmas~7.9--7.10]{FKP}.  Since $h\geq2$, the components
have different genera and are distinguished by the normalisation.

We next compute how the pointed-core tangent spaces vary with
the attaching point.  This produces the bundle sequences
\eqref{eq:Eminus} and \eqref{eq:Eplus}.
Let $p_1,p_2:C\times C\to C$ be the projections, with
the first factor parametrising the moving point, and let
$\Delta$ be the diagonal.  For $B=L^{-1}$ or $T_C$,
define
$$
 \cP_B=R^1p_{1*}\bigl(p_2^*B(-\Delta)\bigr).
$$
Its fibre at $p$ is $H^1(C,B(-p))$, the odd or even
pointed-core tangent respectively.  The bundles $B(-p)$
have negative degree, so their zeroth cohomology vanishes and
their first cohomology has constant dimension.  Cohomology and
base change therefore makes $\cP_B$ locally free and
identifies these fibres.

Push forward the diagonal sequence
$$
 0\longrightarrow p_2^*B(-\Delta)\longrightarrow p_2^*B
 \longrightarrow p_2^*B|_\Delta\longrightarrow0
$$
along $p_1$.  Since $H^0(C,B)=0$ and
$p_1|_\Delta$ is an isomorphism, the result is
\begin{equation}\label{eq:pointed-core-tangent}
 0\longrightarrow B\longrightarrow\cP_B
 \longrightarrow H^1(C,B)\otimes\cO_C\longrightarrow0.
\end{equation}
The first map is the differential of point motion; the last is
the differential of forgetting the point.  Clutching adds the
independent pointed-tail tangent.  Thus, with
$\cE_{C,-}=\cP_{L^{-1}}$ and
$\cE_{C,+}=\cP_{T_C}$, there are natural identifications
$$
 \cE_-=\cE_{C,-}\oplus(U_E\otimes\cO_C),\qquad
 \cE_+=\cE_{C,+}\oplus(W_E\otimes\cO_C).
$$
Together with \eqref{eq:pointed-core-tangent}, these identifications give
\eqref{eq:Eminus} and \eqref{eq:Eplus}.

It remains to identify the smoothing direction and show that
the boundary tangent is a direct summand of the ambient tangent.
The normal sequence of the smooth divisor gives
\eqref{eq:normal-seq}.  The NS normal formula is
$N=\Psi_1^{-1}\otimes\Psi_2^{-1}$, where the two
$\Psi$-lines restrict to $L$ and $M_q$;
see \cite[Section~9.1, equation~(9.6)]{FKP}.  This gives
\eqref{eq:N-L}, including its linearisation.  In particular
the characters on $N$ and $\cL$ are $\rho$ and
$\rho^{-1}$.

The tail action fixes the core family and all its tangent bundles.
By \cref{lem:tail-characters} it also fixes $W_E$, so it
acts trivially on $\cE_+$.  On the other hand $\sigma$
acts by $-1$ on $N$.  Applying the projectors
$(1\pm\sigma)/2$ to \eqref{eq:normal-seq} identifies
$T^{\sigma=+1}$ with $\cE_+$ and maps
$T^{\sigma=-1}$ isomorphically onto $N$.  This proves
the canonical splitting \eqref{eq:sigma-splitting}.
\end{proof}

Let $\jmath_\partial:T_C\to\cE_+$ be the differential
of the family obtained by moving the core point inside $D$.
The differential of the boundary inclusion gives
$i^*d\iota_{D,+}:\cE_+\hookrightarrow T$.  Define
$\jmath$ by the composite
$$
 \jmath:
 T_C\xrightarrow{\ \jmath_\partial\ }\cE_+
 \xrightarrow{\ i^*d\iota_{D,+}\ }T,
 \qquad
 \jmath=(i^*d\iota_{D,+})\circ\jmath_\partial.
$$
Using $T_C\otimes L=L^{-1}$, define the induced maps
$$
 J_h^\partial=H^1(\jmath_\partial\otimes\Id_L),
 \qquad J_h=H^1(\jmath\otimes\Id_L).
$$
The following observation will ensure that the source class is
still nonzero after passage to the target tangent bundle.

\begin{lemma}[Injection of the moving-point class]\label{lem:Jh-injection}
The map
$$
 J_h^\partial:H^1(C,L^{-1})
 \longrightarrow H^1(C,\cE_+\otimes L)
$$
is an isomorphism.  The map
$J_h:H^1(C,L^{-1})\to H^1(C,T\otimes L)$
is injective.  More explicitly,
\eqref{eq:sigma-splitting} gives
$$
 H^1(C,T\otimes L)
 =H^1(C,\cE_+\otimes L)\oplus H^1(C,N\otimes L),
 \qquad
 J_h(\eta)=\bigl(J_h^\partial(\eta),0\bigr).
$$
\end{lemma}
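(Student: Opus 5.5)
The plan is to twist the sequence \eqref{eq:Eplus} by $L$ and take cohomology. Since $T_C\otimes L=L^{-1}$, the twisted sequence is
$$
 0\longrightarrow L^{-1}\xrightarrow{\ \jmath_\partial\otimes\Id_L\ }\cE_+\otimes L
 \longrightarrow W_0\otimes L\longrightarrow0.
$$
Here I use that the first arrow of \eqref{eq:Eplus} is exactly the moving-point differential $\jmath_\partial$. This holds because, in the proof of \cref{prop:boundary-bundles}, the first map of \eqref{eq:pointed-core-tangent} was identified with the differential of point motion, and the tail summand $W_E\otimes\cO_C$ is untouched. The long exact sequence then reads
$$
 H^0(C,W_0\otimes L)\longrightarrow H^1(C,L^{-1})\xrightarrow{J_h^\partial}
 H^1(C,\cE_+\otimes L)\longrightarrow H^1(C,W_0\otimes L).
$$
The ineffectivity condition \eqref{eq:core} gives $H^0(C,L)=H^1(C,L)=0$. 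Since $W_0$ is a constant vector space, both outer groups are $W_0\otimes H^\bullet(C,L)=0$, so $J_h^\partial$ is an isomorphism.

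For $J_h$, I twist the canonical splitting \eqref{eq:sigma-splitting} by $L$. This gives the displayed direct sum decomposition of $H^1(C,T\otimes L)$, and I must locate the image of $i^*d\iota_{D,+}$ within it. By \cref{prop:boundary-bundles}, the summand $\cE_+=T^{\sigma=+1}$ is the image of the inclusion $\cE_+\hookrightarrow T$ from \eqref{eq:normal-seq}. Indeed, $\sigma$ acts trivially on $\cE_+$ and the inclusion is $A$-equivariant, so the image lies in the $+1$ eigenspace, which maps isomorphically onto $\cE_+$. Hence $\jmath=(i^*d\iota_{D,+})\circ\jmath_\partial$ factors as $\jmath_\partial$ followed by inclusion of the first summand, with zero component in $N$. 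Applying $H^1(-\otimes L)$ gives $J_h(\eta)=(J_h^\partial(\eta),0)$. Injectivity of $J_h$ then follows from the first part.

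The only delicate point I expect is bookkeeping. One must check that the abstract map $\jmath_\partial$ agrees with the first arrow of \eqref{eq:Eplus} as constructed via $\cP_{T_C}$. One must also check that the splitting \eqref{eq:sigma-splitting} is compatible with $i^*d\iota_{D,+}$, meaning the equivariance of the boundary inclusion under the tail action. Both are essentially built into the proof of \cref{prop:boundary-bundles}, so I would cite them there rather than recompute.
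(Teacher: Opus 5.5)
Your proposal is correct and follows the paper's proof: twist \eqref{eq:Eplus} by $L$, kill the outer terms $W_0\otimes H^0(C,L)$ and $W_0\otimes H^1(C,L)$ using \eqref{eq:core}, and then use the $\sigma$-eigenspace splitting \eqref{eq:sigma-splitting} to embed $H^1(C,\cE_+\otimes L)$ as a direct summand of $H^1(C,T\otimes L)$. Your extra equivariance check, that the image of $\cE_+\hookrightarrow T$ is exactly $T^{\sigma=+1}$, is the detail that justifies the explicit formula $J_h(\eta)=\bigl(J_h^\partial(\eta),0\bigr)$; the paper handles this by citing the bundle retraction.
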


\begin{proof}
Tensor \eqref{eq:Eplus} by $L$.  Its long exact sequence
contains
$$
 H^0(C,W_0\otimes L)\longrightarrow H^1(C,L^{-1})
 \xrightarrow{J_h^\partial}H^1(C,\cE_+\otimes L)
 \longrightarrow H^1(C,W_0\otimes L).
$$
The two outer groups are zero by \eqref{eq:core}, proving that
$J_h^\partial$ is an isomorphism.  By
\eqref{eq:sigma-splitting}, the inclusion
$\cE_+\otimes L\hookrightarrow T\otimes L$ has a
bundle retraction, and hence induces an injection on cohomology.
Its composite with $J_h^\partial$ is $J_h$.
\end{proof}

\subsection{A pointed test family and the central source}

The preceding bundles were computed along the ordinary curve
$C$, before any odd deformation parameter was introduced.
We now construct the superscheme that will map to the boundary.
The nonzero vector $\eta\in U_C$ deforms the core over
an odd one-dimensional base.  Allowing an NS marking to vary
on this family gives a superscheme $X_\eta$ with reduction
$C$; its primary class will be the same deformation vector,
under an explicit identification of coefficients.

We then attach the tail.  For the even tail, the source includes
one further odd parameter moving its marking, using the
isomorphism $\delta_q$ of \cref{lem:tail-characters}.
For the odd tail, we keep the pointed tail constant.  We will
verify that the resulting map has a locally split differential
on the entire source, including its nilpotent structure, and
use \cref{lem:Jh-injection} to prove that its central comparison
class is nonzero.

Choose a one-dimensional complex vector space $S=\C s$,
place it in odd degree, and set
$$
 \mathbb A_S^{0|1}=\Spec\bigwedge S
 =\Spec(\C\oplus S).
$$
The multiplication satisfies $S^2=0$.  Thus $S$ is
precisely the conormal space of the reduced point
$\Spec\C\hookrightarrow\mathbb A_S^{0|1}$, and the
odd tangent of this base is $S^\vee$.
The tensor $s\otimes\eta$ specifies the
Kodaira--Spencer map $S^\vee\to U_C$ sending
$s^\vee$ to $\eta$.  It defines a first-order
family $f_\eta:\mathbb A_S^{0|1}\to\M_h^+$ through
$(C,L)$.  Since the base ideal is square-zero, this is
an actual family over the odd base.

Pull back the universal once-NS-pointed family.  Write
$\pi_\eta$ and $\kappa_\eta$ for the maps in the
$2$-Cartesian square
\begin{equation}\label{eq:pointed-core-square}
\begin{tikzcd}[column sep=large,row sep=large]
 X_\eta\arrow[r,"\kappa_\eta"]\arrow[d,"\pi_\eta"']
   &\M_{h,1}^+\arrow[d,"\mathrm{forget}"]\\
 \mathbb A_S^{0|1}\arrow[r,"f_\eta"']&\M_h^+.
\end{tikzcd}
\end{equation}
The right-hand morphism forgets the marking.  A point of
$X_\eta$ therefore specifies a member of the odd
deformation family together with a choice of NS marking on it.
The map $\pi_\eta$ is a smooth family of dimension
$1|1$ over the $0|1$ base, so $X_\eta$ is a
smooth superscheme of dimension $1|2$, with reduction
$C$.  We use the pointed test calculation of
\cite[Section~3.5 and Lemma~3.5]{DW}, recalling the cocycle
construction to make both its algebraicity and its coefficient
identifications explicit.

\begin{lemma}[The pointed-core test]\label{lem:pointed-core-test}
The odd cotangent bundle $\cF_\eta=T_-^\vee X_\eta$
fits into
\begin{equation}\label{eq:Feta}
 \begin{gathered}
 0\longrightarrow S\otimes\cO_C\longrightarrow\cF_\eta
 \longrightarrow L\longrightarrow0,\\
 [\cF_\eta]=\pm s\otimes\eta,
 \qquad \bigwedge^2\cF_\eta=S\otimes L.
 \end{gathered}
\end{equation}
Under $T_C\otimes\bigwedge^2\cF_\eta\simeq
S\otimes L^{-1}$, and with the primary-cocycle convention
of \cite[Section~3.5]{DW},
\begin{equation}\label{eq:DW-class}
 \omega_2(X_\eta)=s\otimes\eta
 \quad\text{in }H^1(C,T_C\otimes\bigwedge^2\cF_\eta).
\end{equation}
In particular this is a nonzero algebraic class.
\end{lemma}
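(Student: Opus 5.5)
I will prove the lemma by describing $X_\eta$ through explicit \v{C}ech data on an affine cover of $C$, and reading off both $\cF_\eta$ and the odd-quadratic part of the even transition functions. First choose an affine open cover $\{V_i\}$ of $C$ with local coordinates $x_i$ and local trivialisations $\theta_i$ of $L$, so that the constant super Riemann surface $C_L$ (split, dimension $1|1$) has superconformal transition functions $x_j=F_{ij}(x_i)$, $\theta_j=\theta_i\sqrt{F_{ij}'(x_i)}$. Represent $\eta\in H^1(C,L^{-1})$ by a \v{C}ech cocycle $\eta_{ij}$ of sections of $L^{-1}$. The first-order family over $\mathbb A_S^{0|1}$ is then obtained by composing the transition functions with the odd superconformal vector field $s\,\eta_{ij}$. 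Concretely, $\eta_{ij}$ corresponds to the odd vector field $\eta_{ij}(x)(\partial_\theta-\theta\partial_x)$. Since $s^2=0$, this gives
$$
 x_j=F_{ij}\bigl(x_i+s\,\eta_{ij}\theta_i\bigr),\qquad
 \theta_j=\sqrt{F_{ij}'}\,\bigl(\theta_i+s\,\eta_{ij}\bigr),
$$
up to the sign conventions of \cite[Section~3.5]{DW}. This exhibits the family algebraically. Because $\M_{h,1}^+\to\M_h^+$ is the universal curve, the square \eqref{eq:pointed-core-square} identifies $X_\eta$ with the total space of this glued family, with local coordinates $(x_i\mid\theta_i,s)$.

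Second, I read off the odd cotangent bundle. Modulo the odd ideal squared, the odd coordinates transform as $s\mapsto s$ and $\theta_j\equiv\sqrt{F'_{ij}}(\theta_i+s\eta_{ij})$. Hence the span of $ds$ is a trivial subbundle $S\otimes\cO_C$, and the quotient is spanned by the $\theta_i$, transforming as $L$. The off-diagonal term is the cocycle $\eta_{ij}$ times $s$. This gives the extension \eqref{eq:Feta} with class $\pm s\otimes\eta$, the sign depending on whether one uses the cocycle of the extension or of its dual. Taking the determinant of a rank-two bundle gives $\bigwedge^2\cF_\eta\simeq S\otimes L$, since $\bigwedge^2$ of an extension of line bundles is their tensor product. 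It follows that $T_C\otimes\bigwedge^2\cF_\eta\simeq L^{-2}\otimes L\otimes S= S\otimes L^{-1}$.

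Third, I compute the primary class. The even transition function is $x_j=F_{ij}(x_i)+F_{ij}'(x_i)\,s\,\eta_{ij}\theta_i$. Its component quadratic in the odd coordinates is $F_{ij}'\eta_{ij}\,s\theta_i$. As a section of $T_C\otimes\bigwedge^2\cF_\eta$ over $V_{ij}$, this is exactly the cocycle $s\otimes\eta_{ij}$ under the identification above. The factor $F'_{ij}$ is absorbed by expressing $\partial_{x_j}$ in terms of $\partial_{x_i}$, and $s\theta_i$ is the local generator of $S\otimes L$. Hence $\omega_2(X_\eta)=s\otimes\eta$ in $H^1(C,S\otimes L^{-1})=S\otimes U_C$. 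It is nonzero because $\eta\neq0$ and $s\neq0$.

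The main obstacle is bookkeeping. The identification of the quadratic term with a class in $T_C\otimes\bigwedge^2\cF_\eta$ requires that $\bigwedge^2\cF_\eta$ be computed intrinsically, even though $\cF_\eta$ is nonsplit. One must check that the term $s\theta_i$ is well defined modulo $\cJ^3$: it is, since $s\cdot s\eta_{ij}=0$. One must also check that the transformation of $s\theta_i$ is governed by $\det$ rather than by the individual splitting. I would verify this directly on overlaps and then match signs with the convention of \cite[Section~3.5]{DW}, where the same cocycle computation is recorded.
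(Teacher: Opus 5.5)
Your proposal is correct and follows essentially the same route as the paper. Both arguments glue the split supercurve by composing its transition functions with $1+s\,\eta_{ij}(\partial_\theta-\theta\partial_x)$, read off the extension $0\to S\otimes\cO_C\to\cF_\eta\to L\to0$ and its class $\pm s\otimes\eta$ from the off-diagonal transition entry, and identify the quadratic correction $s\,\eta_{ij}\theta\,\partial_x$ with $s\otimes\eta$ via $T_C\otimes S\otimes L=S\otimes L^{-1}$.

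Two minor points. First, exponentiating that vector field gives $x\mapsto x-s\eta_{ij}\theta$ together with $\theta\mapsto\theta+s\eta_{ij}$. Your displayed $+s\eta_{ij}\theta$ violates $D_\theta x'=\theta' D_\theta\theta'$; this relative sign is forced, not a convention, although it only flips the overall sign of the class. Second, you should state, as the paper does, that the triple-overlap gluing condition reduces to the additive cocycle condition on $\eta_{ij}$ because $s^2=0$.
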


\begin{proof}
We construct the family first, and then read its odd cotangent
bundle and primary cocycle from the same transition functions.
The split supercurve associated with $(C,L)$ has structure
sheaf $\cO_C\oplus L$, with $L$ in odd degree.
Choose affine \'etale superconformal charts
$(x\mid\theta)$, where $x$ is even and $\theta$
is an odd local generator of $L$.  Odd superconformal
vector fields form the sheaf $L^{-1}$.  Represent
$\eta$ by an algebraic cocycle in this sheaf, and write
its local coefficients as $g_{ij}(x)$, with all overlap
expressions transported to a common chart.

With the sign convention of \cite[Section~3.5]{DW},
multiplication by the odd base coordinate gives the
parity-preserving derivations
$$
 D_{ij}=-s\,g_{ij}(x)(\partial_\theta-\theta\partial_x).
$$
These derivations preserve the relative superconformal
structure and kill $s$.  Composing the split transition
functions with $1+D_{ij}$ constructs the deformation.
Every correction is divisible by $s$; hence
$D_{ij}D_{jk}=0$ after transport, because $s^2=0$.
The cocycle condition on triple overlaps is consequently
exactly the additive condition on the cocycle representing
$\eta$.  The inverse of a correction is $1-D_{ij}$.
This verifies gluing and constructs the algebraic family in
\eqref{eq:pointed-core-square}.

To identify the odd cotangent bundle, work modulo the square of
the odd ideal of $X_\eta$.  The base coordinate $s$
is unchanged on overlaps, so it spans the subline
$S\otimes\cO_C$.  Setting $s=0$ recovers the
original split supercurve and its odd cotangent $L$.
This proves the exact sequence in \eqref{eq:Feta}.
The term $-s g_{ij}\partial_\theta$ is the off-diagonal
entry of its transition matrix.  Thus its extension class in
$$
 \Ext^1_{\cO_C}(L,S\otimes\cO_C)
 =S\otimes H^1(C,L^{-1})
$$
is $\pm s\otimes\eta$, according to the convention for
transition cocycles.  Taking determinants gives the canonical
identification $\bigwedge^2\cF_\eta=S\otimes L$.

Finally, the quadratic-odd correction to the even coordinate is
$$
 s\,g_{ij}(x)\theta\,\partial_x.
$$
Since $T_C=L^{-2}$, the coefficient identification is
$$
 T_C\otimes\bigwedge^2\cF_\eta
 =T_C\otimes S\otimes L=S\otimes L^{-1}.
$$
Under this identification, the quadratic correction represents $s\otimes\eta$. 
Its coefficients lie in $T_C\otimes\bigwedge^2\cF_\eta$, where $\cF_\eta$ 
is the odd cotangent bundle determined above.  It proves \eqref{eq:DW-class}.
Because $s\ne0$ and $\eta\ne0$, the resulting class
is nonzero.  
\end{proof}

\emph{The even-tail family.}
For $\eps=+$, put $Y=U_E^\vee$.  The isomorphism
$\delta_q:R_E\to U_E$ of \cref{lem:tail-characters}
allows us to realise every odd pointed deformation by moving the
NS section on the fixed split elliptic supercurve.  More explicitly,
its dual is an equivariant isomorphism
$$
 \delta_q^\vee:Y\xrightarrow{\sim}M_q.
$$
The point over $\mathbb A_Y^{0|1}$ with reduction $q$
is defined by evaluation of the split elliptic structure sheaf
at $q$, followed by the superalgebra map
$$
 \C\oplus M_q\longrightarrow\C\oplus Y,
 \qquad c+m\longmapsto c+(\delta_q^\vee)^{-1}(m),
$$
 where $M_q$ and $Y$ are placed in odd degree.  Together
with the identity on the base, this gives the moving NS section
of the constant elliptic family.  Denote the resulting pointed
family by
$$
 e_Y:\mathbb A_Y^{0|1}\longrightarrow\M_{1,1}^+.
$$
Its point-variation differential is $\delta_q^{-1}:U_E\to R_E$,
so its pointed Kodaira--Spencer map is
$de_{Y,-}=\delta_q\delta_q^{-1}=\Id_{U_E}$, with the
point-variation convention of \eqref{eq:tail-evaluation-cohomology}.

This construction is coherently $A$-equivariant: the fixed
elliptic supercurve carries its spin lift, the character on the
base conormal $Y$ is $\rho^{-1}$, and both evaluation
and $\delta_q^\vee$ are equivariant.  Hence the moving
section is preserved by the simultaneous action on curve and base.
The induced comparison arrows on its moduli map satisfy the group
law because they come from this action on the marked family.

\smallskip
\noindent\emph{The odd-tail family.}
For $\eps=-$, the connecting map $\delta_q$ is
zero.  Moving the marking would therefore contribute no
pointed Kodaira--Spencer vector.  We use the constant pointed
family
$$
 e:\Spec\C\longrightarrow\M_{1,1}^-,
 \qquad e(*)=(E,\cO_E,q),
$$
with the chosen spin lift on the tail as its coherent
comparison action.  Its base has no odd tangent direction.
Consequently the pointed-tail deformation space $U_E$
will remain in the tangential quotient, with the character
$\rho^3$ computed above.

\smallskip
\noindent\emph{The central source and clutching.}
The two central sources and their odd cotangent bundles are
\begin{equation}\label{eq:central-sources}
 \begin{array}{c|c|c}
 \eps&Z_\eta^\eps&\cF^\eps=T_-^\vee Z_\eta^\eps\\\hline
 +&X_\eta\times\mathbb A_Y^{0|1}
   &\cF_\eta\oplus(Y\otimes\cO_C)\\
 -&X_\eta&\cF_\eta.
 \end{array}
\end{equation}
Both have reduced scheme $C$.  We call them central
because they will be the inverse image of the boundary in the
formal extensions.  The \emph{full central superscheme} means
$Z_\eta^\eps$ itself, with all its odd functions and their
products retained, rather than only its reduction $C$.
Locally it has one even coordinate and three odd coordinates
for $\eps=+$, or two odd coordinates for $\eps=-$.
In the following diagram, the upper map is $(\kappa_\eta,e_Y)$
for $\eps=+$ and $(\kappa_\eta,e)$ for $\eps=-$:
\begin{equation}\label{eq:central-clutching-diagram}
\begin{tikzcd}[column sep=large,row sep=large]
 Z_\eta^\eps\arrow[r]\arrow[dr,"F_0"']
 &\M_{h,1}^+\times\M_{1,1}^\eps
   \arrow[d,"\mathrm{clutch}"]\\
 &D\arrow[r,hook]&\mathfrak X_g^\eps.
\end{tikzcd}
\end{equation}
The group $A$ acts trivially on $X_\eta$, and on
the extra odd factor as specified above.  Clutching the equivariant
marked families gives coherent comparison arrows for $F_0$.
Here and in the application of \cref{cor:equivariant-formal},
we equip the target pair $(\mathfrak X_g^\eps,D)$ with
the trivial $A$-action.  Equivariance means that the family
automorphisms give isomorphisms $F_0\circ a\simeq F_0$,
for $a\in A$, satisfying the group law.  Although the action
on the target pair is trivial, these comparison isomorphisms
induce the linearisations of the pulled-back tangent bundles and
boundary conormal line.  The reduction of $F_0$ is $i$.  The total spin parity
is $\eps$: separating clutching adds the parities of the
two components, and parity is locally constant under smoothing.

\begin{lemma}[The full tangential quotient]\label{lem:central-differential}
Let $Z=Z_\eta^\eps$ be the superscheme in
\eqref{eq:central-sources}, and let $F_0:Z\to D$
be its clutching map.  On $Z$ there is a locally split
exact sequence of super vector bundles
$$
 0\longrightarrow T_Z\xrightarrow{\,dF_0\,}F_0^*T_D
 \longrightarrow\cQ\longrightarrow0.
$$
With $V_\eta=U_C/\C\eta$,
the parity summands of its reduction
$\cQ|_C=\cQ\otimes_{\cO_{Z_\eta^\eps}}\cO_C$ are
\begin{equation}\label{eq:Q-reduced}
 (\cQ|_C)_{\bar0}=W_0\otimes\cO_C,
 \qquad
 (\cQ|_C)_{\bar1}=
 \begin{cases}
 V_\eta\otimes\cO_C,&\eps=+,\\
 (V_\eta\oplus U_E)\otimes\cO_C,&\eps=-.
 \end{cases}
\end{equation}
These identifications are equivariant.  The characters on
$W_0,V_\eta$ are trivial, and the retained $U_E$ in
the odd case has character $\rho^3$.
\end{lemma}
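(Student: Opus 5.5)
The plan is to compute $dF_0$ by factoring it through clutching. By \cref{prop:boundary-bundles} the boundary tangent along $i$ is the sum of the pointed-core and pointed-tail tangents. I will first work on the reduction $C$, and then lift local splitness to the full superscheme $Z$ by Nakayama.

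\emph{Step 1: the differential on $C$.} Since $F_0=\mathrm{clutch}\circ(\kappa_\eta,e_Y)$ or $\mathrm{clutch}\circ(\kappa_\eta,e)$, and clutching is étale onto $D$ near our curves (the genera differ), $F_0^*T_D$ is the sum of $\kappa_\eta^*T\M_{h,1}^+$ and the pulled-back tail tangent. Restricted to $C$, the summands are $\cE_{C,\pm}$ and $W_E$, $U_E$.

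The even part of $T_Z|_C$ is $T_C$. Its image under $dF_{0,+}$ is the point-motion inclusion $T_C\to\cE_{C,+}$ of \eqref{eq:pointed-core-tangent}, which is a subbundle with cokernel $W_C\otimes\cO_C$. Adding $W_E$ gives $(\cQ|_C)_{\bar0}=W_0\otimes\cO_C$.

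The odd part of $T_Z|_C$ is $\cF_\eta^\vee$, plus $Y^\vee\otimes\cO_C$ in the even-tail case. Dualising \eqref{eq:Feta} gives $0\to L^{-1}\to\cF_\eta^\vee\to S^\vee\otimes\cO_C\to0$. I claim $d\kappa_{\eta,-}$ maps this sequence to \eqref{eq:pointed-core-tangent} for $B=L^{-1}$, as follows.
\begin{itemize}
\item On the fibre direction $L^{-1}$, the map is odd point motion, i.e.\ the inclusion $L^{-1}\hookrightarrow\cE_{C,-}$.
\item On the quotient, the induced map $S^\vee\otimes\cO_C\to U_C\otimes\cO_C$ is the Kodaira--Spencer map of $f_\eta$, namely $s^\vee\mapsto\eta$. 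This holds because forgetting the point commutes with \eqref{eq:pointed-core-square}.
\end{itemize}
This is a map of extensions whose outer maps are a subbundle inclusion and an injection of trivial bundles. By the snake lemma it is injective with locally free cokernel $V_\eta\otimes\cO_C$.

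For the tail factor, in the even case $de_{Y,-}=\Id_{U_E}$, so $Y^\vee\to U_E$ is an isomorphism and $U_E$ disappears from the quotient. In the odd case the tail map is constant, so $U_E$ survives. This gives \eqref{eq:Q-reduced}, and in particular shows that $dF_0|_C$ is fibrewise injective.

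\emph{Step 2: passing to $Z$.} Here $T_Z$ and $F_0^*T_D$ are super vector bundles on $Z$, and the odd ideal of $Z$ is nilpotent. A morphism of super vector bundles whose restriction to $C$ is a split injection is itself a split injection, with locally free cokernel. To see this, choose local homogeneous bases extending bases on $C$ adapted to the splitting. The matrix of $dF_0$ then has an invertible minor, since invertibility modulo nilpotents suffices, and Nakayama gives the claim. Taking $\cQ$ as the cokernel yields the exact sequence, and $\cQ\otimes\cO_C=\cQ|_C$ is computed by Step 1 by right-exactness.

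\emph{Step 3: equivariance and characters.} Every map above is induced by the coherent $A$-action, so the identifications are $A$-equivariant.
\begin{itemize}
\item $A$ acts trivially on the core, so $W_C$ and $V_\eta$ have trivial character.
\item $W_E$ has trivial character by \cref{lem:tail-characters}.
\item The retained $U_E$ in the odd case has character $\rho^3$, also by \cref{lem:tail-characters}.
\end{itemize}

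The main obstacle is the claim in Step 1 that $d\kappa_{\eta,-}$ induces exactly the map of extensions described, with the quotient map equal to $s^\vee\mapsto\eta$. I would verify it from the explicit cocycle in the proof of \cref{lem:pointed-core-test}. The $s$-direction deforms the core by $g_{ij}(\partial_\theta-\theta\partial_x)$. In the pointed deformation space this maps to the class of $\eta$ modulo point-motion, which is precisely the forgetful image.
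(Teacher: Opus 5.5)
Your proposal is correct and follows essentially the same route as the paper. On $C$ you split the target tangent via clutching, treat the even part through $\jmath_\partial$ and \eqref{eq:Eplus}, treat the odd part through the map of extensions \eqref{eq:odd-differential-diagram} and the snake lemma, and treat the tail through $de_{Y,-}=\Id_{U_E}$ (even tail) or the constant family (odd tail); you then lift local splitness to $Z$ by invertibility modulo the nilpotent odd ideal. The one point to state precisely is that the left vertical map in your map of extensions is the identity of $L^{-1}$, not merely a subbundle inclusion: that is what makes the snake-lemma cokernel exactly $V_\eta\otimes\cO_C$.
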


\begin{proof}
We first compute the differential after restriction to $C$,
where the tangent bundles separate into ordinary even and odd
vector bundles.  For the odd part, dualising \eqref{eq:Feta}
and differentiating the Cartesian square
\eqref{eq:pointed-core-square} gives the commutative diagram
with exact rows
\begin{equation}\label{eq:odd-differential-diagram}
\begin{tikzcd}[column sep=small,row sep=large]
0\arrow[r]&L^{-1}\arrow[r]\arrow[d,equal]
 &\cF_\eta^\vee\arrow[r]\arrow[d,"d\kappa_{\eta,-}"]
 &S^\vee\otimes\cO_C\arrow[r]\arrow[d,"u_\eta"]&0\\
0\arrow[r]&L^{-1}\arrow[r]&\cE_{C,-}\arrow[r]
 &U_C\otimes\cO_C\arrow[r]&0.
\end{tikzcd}
\end{equation}
The left map is the identity on the tangent of the moving core
point.  With the chosen generator of $S$, the right map
sends $s^\vee$ to the Kodaira--Spencer vector $\eta$.
Since $\eta$ is non-zero, it follows that 
$u_\eta$ is an injection with constant cokernel
$V_\eta\otimes\cO_C$.  The snake lemma shows that the
middle map is injective with the same cokernel.

Clutching appends the constant tail summand $U_E$ to
$\cE_{C,-}$.  For the even tail the extra source tangent
is $Y^\vee=U_E$, and $de_{Y,-}$ maps it isomorphically
onto this summand.  For the odd tail the constant map contributes
no source tangent, so $U_E$ remains in the quotient.  This
proves the odd part of \eqref{eq:Q-reduced}.  The even differential
on $C$ is $\jmath_\partial:T_C\to\cE_+$, whose
cokernel is $W_0\otimes\cO_C$ by \eqref{eq:Eplus}.
All maps just used are equivariant, giving the asserted characters.

These calculations establish a locally split injection on
the reduction.  We must now show that they imply the full
bundle assertion on $Z$, which is the hypothesis required
by \cref{prop:transverse-extension}.  Let $\cJ$ be the
ideal generated by the odd functions on $Z$, so that
$\cO_Z/\cJ=\cO_C$ and $\cJ$ is nilpotent.
On a local
split chart of $Z_\eta^\eps$, choose homogeneous bases of
source and target tangent bundles.  The reduced images of the
source basis are part of a homogeneous target basis, since the
reduced cokernel just computed is locally free.  Complete them
to a basis and lift the complementary vectors.  The full images
under $dF_0$, together with those lifts, form an invertible
matrix modulo the nilpotent odd ideal.  They therefore form an
invertible matrix over the full ring: after lifting an inverse,
the remaining error is nilpotent and is inverted by a finite
geometric series.  In this target basis the differential has form
$$
 dF_0=\begin{pmatrix}\Id\\0\end{pmatrix}.
$$
It is consequently a locally split injection, and its cokernel
is locally free.  Its rank is
$(3h-2\mid2h-3)$ for $\eps=+$ and
$(3h-2\mid2h-2)$ for $\eps=-$, in agreement with
\eqref{eq:Q-reduced}.
\end{proof}

We can now combine the source calculation with the tangent
maps computed above.  To state the comparison precisely,
regard $F_0$ as the composite
$Z_\eta^\eps\to D\hookrightarrow\mathfrak X_g^\eps$,
with reduction $i$.  Its odd differential induces the
coefficient map
$$
 \begin{aligned}
 \iota_{F_0}:
 i^*\Hom\!\left(\bigwedge^2T_-\mathfrak X_g^\eps,
                         T_+\mathfrak X_g^\eps\right)
 &\longrightarrow T\otimes\bigwedge^2\cF^\eps,\\
 \beta&\longmapsto\beta\circ\bigwedge^2dF_{0,-}.
 \end{aligned}
$$
Define the central comparison class by
$$
 c_\eta^\eps:=
 H^1(\iota_{F_0})
 \bigl(i^*\omega_2(\mathfrak X_g^\eps)\bigr)
 \in H^1(C,T\otimes\bigwedge^2\cF^\eps).
$$
\Needspace{14\baselineskip}
\begin{proposition}[Nonzero central comparison]\label{prop:central-class}
The central comparison class satisfies
$$
 c_\eta^\eps=s\otimes J_h(\eta)\ne0,
$$
where the right-hand side is included through the direct
summand
$\bigwedge^2\cF_\eta=S\otimes L
\subset\bigwedge^2\cF^\eps$.
More precisely, $c_\eta^\eps$ is the image of
$$
 s\otimes J_h^\partial(\eta)
 \in H^1(C,S\otimes\cE_+\otimes L)
$$
under the boundary-tangent inclusion and this exterior-square
inclusion.  In particular $c_\eta^\eps$ is $A$-invariant.
\end{proposition}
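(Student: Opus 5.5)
The plan is to apply the compatibility identity \eqref{eq:primary-naturality} to the clutching map $F_0:Z_\eta^\eps\to\mathfrak X_g^\eps$, whose reduction is $i$. With $f=F_0$ the coefficient sheaf \eqref{mixed} restricted to $C$ is $\cG_{F_0}=T\otimes\bigwedge^2\cF^\eps$. The identity then reads
$$
 c_\eta^\eps=H^1(\iota_{F_0})\bigl(i^*\omega_2(\mathfrak X_g^\eps)\bigr)
 =H^1(j_{F_0})\bigl(\omega_2(Z_\eta^\eps)\bigr),
$$
so the task reduces to computing the source primary class and pushing it forward along $dF_{0,+}:T_C\to T$. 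On the reduction, this even differential is exactly $\jmath=(i^*d\iota_{D,+})\circ\jmath_\partial$, by construction of the moving-point family.

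Next we compute $\omega_2(Z_\eta^\eps)$. For $\eps=-$ we have $Z=X_\eta$, and \cref{lem:pointed-core-test} gives $\omega_2=s\otimes\eta$ in $H^1(C,T_C\otimes S\otimes L)$. For $\eps=+$, $Z=X_\eta\times\mathbb A_Y^{0|1}$ is a product with a purely odd affine factor. We take product charts in which the $Y$-coordinate is unchanged on overlaps. The even transition functions are then those of $X_\eta$, so they contain no quadratic terms involving the $Y$-coordinate. Hence $\omega_2(Z)$ is the image of $\omega_2(X_\eta)$ under the direct-summand inclusion
$$
 \bigwedge^2\cF_\eta=S\otimes L\subset\bigwedge^2\bigl(\cF_\eta\oplus Y\otimes\cO_C\bigr).
$$
The other summands, $\cF_\eta\otimes Y$ and $\bigwedge^2 Y=0$, receive zero components. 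In both cases, applying $H^1(j_{F_0})=H^1(\jmath\otimes\Id)$ gives $s\otimes J_h(\eta)$ on that summand. Unwinding $\jmath$ exhibits it as the image of $s\otimes J_h^\partial(\eta)$ under the boundary-tangent inclusion.

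Nonvanishing then follows from \cref{lem:Jh-injection}: $J_h$ is injective and $\eta\neq0$, and the inclusion of a direct summand $S\otimes T\otimes L\subset T\otimes\bigwedge^2\cF^\eps$ is injective on $H^1$, so $c_\eta^\eps\neq0$. For $A$-invariance, $A$ acts trivially on $X_\eta$, on $S$, on $L$, and on $\cE_+$ by \cref{prop:boundary-bundles}. The class $s\otimes J_h^\partial(\eta)$ therefore lies in a subspace on which $A$ acts trivially. The inclusions used are equivariant, since they come from the equivariant clutching data, so the image is invariant.

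The main obstacle is the identification of $dF_{0,+}$ restricted to $C$ with $\jmath$, together with the claim that the $Y$-factor contributes nothing to $\omega_2$. Both must hold with the precise coefficient identifications, including signs, that make the $\pm$ in \eqref{eq:Feta} harmless. Nonvanishing is sign-insensitive, so I would fix the sign convention of \cite[Section~3.5]{DW} throughout and check only that the product charts are compatible with the étale atlas used for the target.
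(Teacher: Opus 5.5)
Your proposal is correct and follows the paper's proof essentially step by step. You compute the source primary class, with the $Y$-factor entering only through the direct-summand inclusion $\bigwedge^2\cF_\eta\subset\bigwedge^2\cF^+$, push it forward along $\jmath$ using \eqref{eq:primary-naturality}, and obtain nonvanishing and $A$-invariance from \cref{lem:Jh-injection} and the equivariance of the summand inclusions; the only cosmetic difference is that you cite the packaged injectivity of $J_h$, whereas the paper retraces it through the isomorphism $J_h^\partial$ and the $\sigma$-splitting retraction.
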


\begin{proof}
We first identify the primary class of the source, and then
apply functoriality to its map into the ambient superstack.
For $\eps=-$, the source is $X_\eta$, with primary
class $s\otimes\eta$ by \cref{lem:pointed-core-test}.
For $\eps=+$, its product with a constant split odd line
has transitions independent of the extra odd coordinate.  Its
primary class is therefore the image of the same class under
the direct-summand inclusion
$$
 \bigwedge^2\cF_\eta
 \hookrightarrow\bigwedge^2\cF^+
 =\bigwedge^2\cF_\eta\oplus(\cF_\eta\otimes Y).
$$
Thus in both cases the source primary class has the same
nonzero component $s\otimes\eta$, and in the even case
its component involving $Y$ is zero.
The even differential of the central map is the composite
$\jmath:T_C\to\cE_+\to T$ defined above.
The primary-class compatibility \eqref{eq:primary-naturality}
therefore reads
$$
 H^1(\iota_{F_0})\bigl(i^*\omega_2(\mathfrak X_g^\eps)\bigr)
 =H^1(\jmath\otimes\Id_{\wedge^2\cF^\eps})
      \bigl(\omega_2(Z_\eta^\eps)\bigr)
 =s\otimes J_h(\eta).
$$
To check nonvanishing, follow the coefficient maps separately.
First, \cref{lem:Jh-injection} gives an isomorphism
$J_h^\partial$, so
$s\otimes J_h^\partial(\eta)\ne0$.
Second, the boundary-tangent inclusion has the retraction
given by \eqref{eq:sigma-splitting}.  Finally, the
exterior-square inclusion is the identity for the odd tail
and a direct-summand inclusion for the even tail.  Both
inclusions therefore induce injections on cohomology.
Their composite cannot kill the class.

The action of $A$ is trivial on $S,\cE_+$, and $L$.
The boundary-tangent summand and the exterior-square inclusion
are equivariant, so their image $c_\eta^\eps$ is invariant.
\end{proof}

\begin{remark}[{Why we wag the even tail}]\label{rem:wag-tail}
The reason for moving the even tail is an obstruction
calculation, rather than a need to obtain a nonzero central
class.  Suppose we kept an even pointed tail constant and used
only $X_\eta$ as source.  The odd tangent quotient on
$C$ would then retain $U_E$.  In the order-one
extension coefficient, the degree-one piece of the odd-ideal
filtration would consequently contain
$$
 U_E\otimes\cF_\eta\otimes L\otimes R_E^{-1}.
$$
Its $A$-character would be
$\rho\cdot\rho^{-1}=1$.  Moreover, tensoring
\eqref{eq:Feta} by $L$ and using $H^1(C,L)=0$
gives
$$
 H^1(C,\cF_\eta\otimes L)\simeq H^1(C,L^2)
 =H^1(C,K_C)\simeq\C.
$$
Thus our calculation with graded cohomology would encounter
a nonzero invariant line.  This shows why
the vanishing criterion used here would not follow from the
tail characters and the core ineffectivity alone.

For an even tail, $\delta_q:R_E\to U_E$ is an
isomorphism.  The moving NS section therefore adds to the
source a tangent direction mapping onto the whole of $U_E$,
and removes this summand from $\cQ|_C$.
It also adds the conormal $Y=U_E^\vee$ to the source.
Both effects are included in the complete calculation of
Section~\ref{sec:vanishing}; they leave no invariant graded
first cohomology at the first two orders.

For an odd tail, a movement of the marking is absorbed by
an unpointed odd infinitesimal automorphism, so $\delta_q=0$.
We therefore keep the pointed tail constant.  Its retained
$U_E$ has character $\rho^3$, and the corresponding
order-one coefficient has character
$\rho^3\rho^{-1}=-1$, so the problematic invariant
line above does not occur.  Thus the two choices of source
serve the same obstruction-vanishing argument by different
representations.
\end{remark}

\Needspace{10\baselineskip}
\section{The obstruction calculation and formal smoothing}
\label{sec:vanishing}

Section~\ref{sec:boundary} constructed a map
$F_0:Z_\eta^\eps\to D$ from a smooth central superscheme
into the separating boundary of $\mathfrak X_g^\eps$, where
$g=h+1$.  Its reduction is the curve $C$, and its
comparison class is nonzero by \cref{prop:central-class}.
Moreover, \cref{lem:central-differential} showed that the
differential of $F_0$ is a locally split injection, with
locally free quotient $\cQ$.  These are the initial data
for the transverse extension problem of Section~\ref{subsec:extension-problem}.
We now construct a formal thickening of this source and extend
$F_0$ in a direction that smooths the attaching node.

The source thickening and its map will be chosen together, as in
\cref{def:transverse}.  Thus the remaining task is to show that
the obstruction groups of \cref{cor:equivariant-formal} have
no invariant classes.  We first identify their coefficients
and compute their associated graded separately for the two
tails.  A common vanishing argument then proves
\cref{thm:formal-map}, the main result of this section.
This produces the formal smoothing needed in
Section~\ref{sec:Laurent}; the survival of the comparison class
after leaving the boundary will be proved there.

Write $Z=Z_\eta^\eps$.  Let $\cI_D$ be the ideal
of the boundary divisor $D$ in $\mathfrak X_g^\eps$.
Its conormal line bundle $\cI_D/\cI_D^2$ is a bundle
on $D$, so the central map defines an even line bundle
on the full superscheme $Z$:
$$
 \widetilde\cL=F_0^*(\cI_D/\cI_D^2).
$$
This is the prescribed conormal line bundle of $Z$ in the
formal extension we seek.  Its restriction to the reduced curve
is the dual of the smoothing normal line computed in
\cref{prop:boundary-bundles}:
$$
 \widetilde\cL|_C=\cL=N^\vee=L\otimes R_E^{-1},
 \qquad N=L^{-1}\otimes R_E.
$$
Here $R_E=M_q^{-1}$ is the one-dimensional tail spin
fibre introduced in Section~\ref{sec:boundary}.  Distinguishing
$\widetilde\cL$ from $\cL$ matters: the lifting
problem is posed on $Z$, including its odd nilpotents,
whereas the cohomology calculation below will use bundles on $C$.

For $r\geq1$, put
$$
 \cB_r=\cQ\otimes\widetilde\cL^r.
$$
For a chosen order-$(r-1)$ map,
\cref{prop:transverse-extension} places the next obstruction
in $H^1(Z,(\cB_r)_{\bar0})$.  The factor $\cQ$
records changes of the map modulo changes of source coordinates;
$\widetilde\cL^r$ records their normal order.  In a local
boundary parameter $t$, an order-$r$ difference is a
coefficient multiplying $t^r$.  

We compute these groups using a second, finite filtration.  Let
$\cJ$ be the ideal generated by the odd functions of $Z$,
and let $\cF^\eps=\cJ/\cJ^2$ be its odd cotangent
bundle on $C$.  Smoothness of $Z$ gives
$\gr_\cJ\cO_Z=\bigwedge^\bullet\cF^\eps$, and
local freeness of $\cB_r$ gives
\begin{equation}\label{eq:odd-filtration}
 \gr^k_\cJ\cB_r
 \simeq\cQ|_C\otimes\bigwedge^k\cF^\eps\otimes\cL^r.
\end{equation}
Here $r$ is the normal order of the extension problem,
whereas $k$ counts odd factors on the central source.  We
hold $r$ fixed while passing through the finitely many
values of $k$.  This canonical associated-graded formula
requires no splitting of $Z$ or of $\cQ$.  Its total
even part uses $(\cQ|_C)_{\bar0}$ when $k$ is even
and $(\cQ|_C)_{\bar1}$ when $k$ is odd.

Recall also that $A=\langle a\rangle\simeq\boldsymbol\mu_4$
acts trivially on $C,L,S,\cF_\eta,W_0,V_\eta$, and that
$\rho=\chi(R_E)$ satisfies $\rho^2=-1$.
The character on $\cL=L\otimes R_E^{-1}$ is therefore
$\rho^{-1}$.  In \eqref{eq:odd-filtration}, the factor
$\cL^r$ contributes $\rho^{-r}$.

For clarity, we fix a notation convention for the two coefficient
tables \eqref{eq:even-obstructions} and \eqref{eq:odd-obstructions}
below.  A constant vector space denotes its tensor product with
$\cO_C$.  Each entry in the bundle column must also be
tensored with $R_E^{-r}$, which is suppressed there to
keep the table readable.  The character column includes this
factor, as well as the character of $Y$ or $U_E$
when present.  Rows with the same value of $k$ are the
direct summands of the corresponding total-even graded piece.

\subsection{The even tail}

For an even tail, the central source of
\eqref{eq:central-sources} is
$Z_\eta^+=X_\eta\times\mathbb A_Y^{0|1}$, where
$Y=U_E^\vee$ and $\chi(Y)=\rho^{-1}$.
The additional odd parameter moves the marked point on the
tail.  Its differential removes $U_E$ from the tangent
quotient, while its functions add $Y$ to the odd cotangent
bundle of the central source.  Both effects must be included
in \eqref{eq:odd-filtration}.

More explicitly, \cref{lem:central-differential} gives
$$
 \cF^+=\cF_\eta\oplus Y,\qquad
 (\cQ|_C)_{\bar0}=W_0\otimes\cO_C,\qquad
 (\cQ|_C)_{\bar1}=V_\eta\otimes\cO_C,
$$
where $V_\eta=U_C/\C\eta$ and $W_0=W_C\oplus W_E$.
Since $\bigwedge^2\cF_\eta=S\otimes L$, the exterior
powers of the rank-three odd cotangent bundle are
\begin{align*}
 \bigwedge^0\cF^+&=\cO_C,&
 \bigwedge^1\cF^+&=\cF_\eta\oplus Y,\\
 \bigwedge^2\cF^+&=(S\otimes L)\oplus(\cF_\eta\otimes Y),&
 \bigwedge^3\cF^+&=S\otimes L\otimes Y.
\end{align*}
Higher exterior powers vanish.  Substituting these decompositions
into \eqref{eq:odd-filtration} gives all the total-even graded
pieces, with the convention about $R_E^{-r}$ just stated:
\begin{equation}\label{eq:even-obstructions}
\begin{array}{c|l|c}
 k&\text{bundle factor on }C& A\text{-character}\\\hline
 0&W_0\otimes L^r&\rho^{-r}\\
 1&V_\eta\otimes\cF_\eta\otimes L^r&\rho^{-r}\\
 1&V_\eta\otimes Y\otimes L^r&\rho^{-(r+1)}\\
 2&W_0\otimes S\otimes L^{r+1}&\rho^{-r}\\
 2&W_0\otimes\cF_\eta\otimes Y\otimes L^r&\rho^{-(r+1)}\\
 3&V_\eta\otimes S\otimes Y\otimes L^{r+1}&\rho^{-(r+1)}.
\end{array}
\end{equation}
For example, the second summand at $k=1$ is actually
$V_\eta\otimes Y\otimes L^r\otimes R_E^{-r}$.
Its character is $\rho^{-1}\rho^{-r}=\rho^{-(r+1)}$.
The degree-three term uses the odd quotient $V_\eta$:
its parity is odd plus odd, hence even.  It is therefore part
of the obstruction coefficient, despite its odd filtration degree.

\subsection{The odd tail}

For an odd tail, the central source is instead
$Z_\eta^-=X_\eta$, and the pointed tail family is
constant.  There is no additional source parameter to remove
the tail deformation direction $U_E$.  Thus the {odd cotangent bundle} has rank two, while the odd tangent quotient
has the extra summand $U_E$.  

In formulas,
$$
 \cF^-=\cF_\eta,\qquad
 (\cQ|_C)_{\bar0}=W_0\otimes\cO_C,\qquad
 (\cQ|_C)_{\bar1}=(V_\eta\oplus U_E)\otimes\cO_C.
$$
The exterior powers of the odd cotangent bundle are
$$
 \bigwedge^0\cF^-=\cO_C,\qquad
 \bigwedge^1\cF^-=\cF_\eta,\qquad
 \bigwedge^2\cF^-=S\otimes L,
$$
and the higher ones vanish.  By \cref{lem:tail-characters}, the
retained $U_E$ has character $\rho^3$.
Consequently \eqref{eq:odd-filtration} gives the complete table
\begin{equation}\label{eq:odd-obstructions}
\begin{array}{c|l|c}
 k&\text{bundle factor on }C& A\text{-character}\\\hline
 0&W_0\otimes L^r&\rho^{-r}\\
 1&V_\eta\otimes\cF_\eta\otimes L^r&\rho^{-r}\\
 1&U_E\otimes\cF_\eta\otimes L^r&\rho^{3-r}\\
 2&W_0\otimes S\otimes L^{r+1}&\rho^{-r}.
\end{array}
\end{equation}
In the third row, the two character factors are
$\rho^3$ from $U_E$ and $\rho^{-r}$ from
the conormal power.  In particular, although the extra terms
in the two tables have different geometric origins, their
characters agree: $\rho^{3-r}=\rho^{-(r+1)}$.
This is why the same low-order argument will apply to both tails.

\subsection{Common vanishing and formal extension}

The tables describe associated-graded bundles on $C$.
The extension criterion, however, requires a statement about
$H^1(Z,(\cB_r)_{\bar0})$ on the full central superscheme.
We first prove vanishing for the graded pieces and then recover
the required statement by the finite odd-ideal filtration.
At normal orders one and two, the cohomology groups need not
vanish, but their characters exclude invariant classes.
From order three onward, every relevant line-bundle factor has
degree greater than $2h-2$, so its entire first cohomology
vanishes.

Once this has been established, \cref{cor:equivariant-formal}
constructs compatible transverse extensions at every order,
starting from the central map alone.  The result is the formal
smoothing of \cref{thm:formal-map}, including the local
algebraic maps obtained by inverting the boundary parameter
in the completed source rings.

\begin{lemma}[Vanishing at every order]\label{lem:all-orders}
For the central data of Section~\ref{sec:boundary}, with
$h\geq2$, $0\ne\eta\in U_C$, and either parity,
the full obstruction coefficients satisfy
\begin{align*}
 H^1\bigl(Z_\eta^\eps,(\cQ\otimes\widetilde\cL^r)_{\bar0}\bigr)^A
 &=0 &&(r=1,2),\\
 H^1\bigl(Z_\eta^\eps,(\cQ\otimes\widetilde\cL^r)_{\bar0}\bigr)
 &=0 &&(r\geq3).
\end{align*}
In particular, the invariant obstruction group vanishes for
every $r\geq1$.
\end{lemma}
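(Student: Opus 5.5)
The plan is to reduce the statement on the full central superscheme $Z=Z_\eta^\eps$ to statements about ordinary bundles on $C$, using the finite odd-ideal filtration \eqref{eq:odd-filtration}. The rows of \eqref{eq:even-obstructions} and \eqref{eq:odd-obstructions} list the graded pieces. For $r=1,2$ I expect their characters to exclude invariant classes. For $r\geq3$ I expect degree bounds to kill the whole of $H^1$.

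\emph{Reduction to graded pieces.} The ideal $\cJ$ is $A$-stable, because $A$ acts on $Z$ by superscheme automorphisms and $\cJ$ is generated by odd functions. Hence the filtration $\cJ^k\cB_r$ is $A$-stable, and taking total even parts gives a finite $A$-stable filtration of $(\cB_r)_{\bar0}$. This is a filtration by coherent sheaves on the ordinary scheme $(|Z|,\cO_{Z,\bar0})$, whose topological space is $|C|$. Its successive quotients are the total-even parts of $\gr^k_\cJ\cB_r$, which are $\cO_C$-modules listed row by row in the tables. From the short exact sequences
$$
0\to(\cJ^{k+1}\cB_r)_{\bar0}\to(\cJ^k\cB_r)_{\bar0}\to(\gr^k_\cJ\cB_r)_{\bar0}\to0
$$
I would argue by descending induction on $k$. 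In each long exact sequence the middle $H^1$ is squeezed between two $H^1$'s. Since $A$ is finite and we are over $\C$, taking $A$-invariants is exact. So it suffices to prove the vanishing, respectively the vanishing of invariants, for each row of the tables.

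\emph{Orders one and two.} Every row has character $\rho^{-r}$ or $\rho^{-(r+1)}$; for the odd tail this uses $\rho^{3-r}=\rho^{-(r+1)}$. Since $\rho$ has order four, for $r=1$ these characters are $\rho^{-1}$ and $\rho^{-2}=-1$, and for $r=2$ they are $-1$ and $\rho^{-3}=\rho$. All are nontrivial. Each graded piece is $A$-isotypic: the group acts trivially on $C,L,S,\cF_\eta,W_0,V_\eta$ and through a single character on the rest. Hence its first cohomology is isotypic of a nontrivial character, so its invariants vanish.

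\emph{Orders $r\geq3$.} Each row is a constant vector space tensored with $L^r$, $L^{r+1}$, or $\cF_\eta\otimes L^r$. We have $\deg L^r=r(h-1)\geq3h-3>2h-2$ because $h\geq2$, so $H^1(C,L^r)=H^1(C,L^{r+1})=0$ by Serre duality. For the $\cF_\eta$ rows, tensor \eqref{eq:Feta} with $L^r$ to get
$$
0\to S\otimes L^r\to\cF_\eta\otimes L^r\to L^{r+1}\to0.
$$
Both outer $H^1$'s vanish, so $H^1(C,\cF_\eta\otimes L^r)=0$. The factors $Y$, $U_E$ and $R_E^{-r}$ are constant lines and do not affect this.

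The only delicate point is the first step. One must check that taking total even parts commutes with the filtration and produces exactly the listed $\cO_C$-modules, with odd $k$ paired with $(\cQ|_C)_{\bar1}$. One must also check that the $A$-linearisation induced by the comparison arrows of $F_0$ respects the filtration. Both should follow from local freeness of $\cQ$ and $\widetilde\cL$ on $Z$, together with the canonical identification $\gr_\cJ\cO_Z=\bigwedge^\bullet\cF^\eps$.
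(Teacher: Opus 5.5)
Your proposal is correct and follows the paper's proof essentially step by step. It uses the same finite odd-ideal filtration with exact $A$-invariants and descending induction on $k$, the same check that none of $\rho^{-r},\rho^{-(r+1)}$ is trivial for $r=1,2$, and the same Serre-duality degree argument for $r\geq3$ via $0\to S\otimes L^r\to\cF_\eta\otimes L^r\to L^{r+1}\to0$; the paper simply treats the graded pieces first and the transfer to $Z_\eta^\eps$ last.
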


\begin{proof}
We begin with normal orders one and two.  Because $\rho^4=1$,
the complete lists of characters in
\eqref{eq:even-obstructions} and \eqref{eq:odd-obstructions}
reduce to
$$
 \begin{array}{c|c|c}
 r&\eps=+&\eps=-\\\hline
 1&\rho^3,\rho^2&\rho^3,\rho^2\\
 2&\rho^2,\rho&\rho^2,\rho.
 \end{array}
$$
Since $\rho^2=-1$, none of these characters is trivial.
The group $A$ fixes $C$ and acts on each bundle
summand by the displayed scalar character.  It acts by the same
scalar on its cohomology: this follows, for example, by taking
the \v{C}ech complex of an affine cover of $C$, on which
the action is scalar in every degree.  Thus every total-even
graded piece has zero invariant first cohomology for $r=1,2$.

For $r\geq3$, we can prove the stronger vanishing without
using the action.  The only nonconstant bundles in the tables
are powers of $L$ and twists of $\cF_\eta$.
Tensoring \eqref{eq:Feta} with $L^r$ gives
$$
 0\longrightarrow S\otimes L^r
 \longrightarrow\cF_\eta\otimes L^r
 \longrightarrow L^{r+1}\longrightarrow0.
$$
Hence each table entry is either a constant vector space tensored
with $L^r$ or $L^{r+1}$, or an extension of two
such bundles.  Recall that $L^2\simeq K_C$ and
$\deg L=h-1>0$.  For every $m\geq3$, Serre
duality gives
\begin{equation}\label{eq:degree-vanishing}
 H^1(C,L^m)^\vee
 =H^0(C,K_C\otimes L^{-m})
 =H^0(C,L^{2-m})=0,
\end{equation}
because $L^{2-m}$ has negative degree.  The cohomology
sequence of the displayed extension therefore also gives
$H^1(C,\cF_\eta\otimes L^r)=0$.
Tensoring by any of the constant factors preserves these
vanishings.  Every total-even graded piece consequently has
zero first cohomology when $r\geq3$.
In particular, the odd-tail term containing $U_E$ has
trivial character at $r=3$, but its entire first cohomology
already vanishes by this degree calculation.

It remains to transfer these conclusions from the associated
graded to the actual coefficient on $Z$.  Let
$\nu_+=3$ and $\nu_-=2$ be the odd dimensions
of the two central sources, and set
$$
 \cB_r^k=(\cJ^k\cB_r)_{\bar0},\qquad
 G_r^k=(\gr^k_\cJ\cB_r)_{\bar0}.
$$
The filtration is finite, with $\cB_r^{\nu_\eps+1}=0$,
and has equivariant exact sequences
\begin{equation}\label{eq:full-coefficient-filtration}
 0\longrightarrow\cB_r^{k+1}\longrightarrow\cB_r^k
 \longrightarrow G_r^k\longrightarrow0.
\end{equation}
The sheaf $G_r^k$ is annihilated by $\cJ$, so its
cohomology is that of the bundle on $C$ listed in the
tables.  For $r=1,2$, the long exact sequence followed
by finite-group invariants contains the exact segment
$$
 H^1(Z,\cB_r^{k+1})^A
 \longrightarrow H^1(Z,\cB_r^k)^A
 \longrightarrow H^1(C,G_r^k)^A.
$$
Here taking invariants is exact because averaging over the finite
group is available in characteristic zero.  The right-hand group
vanishes by the character calculation.  Starting with
$\cB_r^{\nu_\eps+1}=0$ and descending in $k$,
we obtain the vanishing of the middle group at every stage.
For $r\geq3$, the same argument works without taking
invariants, using the degree vanishing for $G_r^k$.
Finally, $\cB_r^0=(\cQ\otimes\widetilde\cL^r)_{\bar0}$,
which proves both assertions.  
\end{proof}

\begin{remark}[The role of the variable source]\label{rem:no-cubic}
The coefficient in this lemma is $\cQ\otimes\widetilde\cL^r$,
where $\cQ=F_0^*T_D/dF_0(T_Z)$.  In particular, the
moving-core-point directions $T_C$ and $L^{-1}$
have already been removed, as has the moving-tail direction
in the even case.  {This is the quotient arising from
source reparametrisation in \cref{prop:transverse-extension}.}

The lemma also includes the first-order obstruction. The class of the normal
sequence in \cref{lem:first-normal} is invariant and is killed
by the calculation at $r=1$.  Both low orders use the
full group $A$, not just its involution $\sigma=a^2$:
the nontrivial character $\rho^2$ becomes trivial on
$\langle\sigma\rangle$.  From order three onward,
$L^2\simeq K_C$ and $\deg L>0$ suffice to kill
the entire obstruction group.  
\end{remark}

We now apply the vanishing to the central map.  The following
theorem is the main conclusion of the section: it constructs
the formal source together with its map, and gives the local
maps to smooth supermoduli that will be used in the Laurent
comparison.

\begin{theorem}[Uniform transverse formal smoothing]\label{thm:formal-map}
Let $h\geq2$, $g=h+1$, $0\ne\eta\in U_C$,
and $\eps\in\{+,-\}$, with the core, tail, and central
map $F_0:Z_\eta^\eps\to D\subset\mathfrak X_g^\eps$
constructed in Section~\ref{sec:boundary}.  Here $D$ is
the divisor of curves with the selected separating NS node,
and $\mathfrak X_g^\eps\setminus D=\M_g^\eps$.
There exists a coherently $A$-equivariant transverse formal
extension
$$
 \widehat F:\widehat{\mathfrak Y}_\eta^\eps
 \longrightarrow\widehat{\mathfrak X_g^\eps}_D,
$$
where the target is the formal completion along $D$,
and $\widehat{\mathfrak Y}_\eta^\eps$ is a formal
superscheme containing $Z_\eta^\eps$.  Its central
restriction is specified by the $2$-Cartesian square
$$
\begin{tikzcd}[column sep=large,row sep=large]
 Z_\eta^\eps \arrow[r,hook] \arrow[d,"F_0"']
 &\widehat{\mathfrak Y}_\eta^\eps \arrow[d,"\widehat F"]\\
 D \arrow[r,hook]&\widehat{\mathfrak X_g^\eps}_D.
\end{tikzcd}
$$
The source is locally $\Spf B[[t]]$, where
$\Spec B$ is an affine chart of $Z_\eta^\eps$
and $t$ is one new even formal parameter.  Its local
coordinate dimensions are $2|3$ for $\eps=+$ and
$2|2$ for $\eps=-$.  The pulled-back boundary
ideal $\cI$ is Cartier and satisfies
$$
 \cO_{\widehat{\mathfrak Y}_\eta^\eps}/\cI
 \simeq\cO_{Z_\eta^\eps},\qquad
 \cI^r/\cI^{r+1}\simeq\widetilde\cL^r
 \quad(r\geq1).
$$

More explicitly, on a compatible affine \'etale target chart
$V=\Spec R\to\mathfrak X_g^\eps$, write
$D_V=D\times_{\mathfrak X_g^\eps}V=(u=0)$.
On an affine formal source chart mapping to its completion,
the induced homomorphism satisfies
$$
 \varphi:R\longrightarrow B[[t]],\qquad
 \varphi(u)=e\,t,
 \qquad e\in\bigl(B[[t]]_{\bar0}\bigr)^\times.
$$
It therefore induces a morphism of superschemes to the
smooth-curve locus, followed by its atlas map:
$$
 \Spec\!\bigl(B[[t]][t^{-1}]\bigr)
 \longrightarrow V\setminus D_V
 \longrightarrow\M_g^\eps.
$$
\end{theorem}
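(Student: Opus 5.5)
The proof will apply \cref{cor:equivariant-formal} to the data $(\mathfrak X,D,Z,f,H)=(\mathfrak X_g^\eps,D,Z_\eta^\eps,F_0,A)$, with the trivial $A$-action on the target pair and the coherent comparison arrows for $F_0$ from Section~\ref{sec:boundary}.

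\emph{Checking the hypotheses.} By \cref{prop:boundary-bundles}, $D$ is a smooth effective even Cartier divisor in the smooth separated Deligne--Mumford superstack $\mathfrak X_g^\eps$. Separatedness and finite type are inherited from the open inclusion into $\Mbar_g^\eps$ from \cite[Theorem~A]{FKP}. The source $Z_\eta^\eps$ is smooth with proper reduction $C$. By \cref{lem:central-differential}, $dF_0$ is a locally split injection with quotient $Q_{F_0}=\cQ$, and by definition $A_{F_0}=\widetilde\cL$. The coefficient sheaves are therefore $\cM_r=(\cQ\otimes\widetilde\cL^r)_{\bar0}=(\cB_r)_{\bar0}$. \cref{lem:all-orders} gives exactly criterion \eqref{eq:all-order-criterion} for every $r\geq1$. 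The only point requiring care is that the $A$-linearisations on $\cQ$ and $\widetilde\cL$ used in \cref{lem:all-orders} are the ones induced by the comparison arrows. This holds by construction, since the characters were read off from the family automorphisms.

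\emph{Formal extension.} \cref{cor:equivariant-formal} then produces a compatible $A$-equivariant system of transverse extensions $Z_{[r]}$ with maps $F_{[r]}$. Its colimit $\widehat{\mathfrak Y}_\eta^\eps$ is a formal superscheme, locally $\Spf B[[t]]$ on affine opens $\Spec B\subset Z_\eta^\eps$ on which $\widetilde\cL$ is trivial. The pulled-back ideal $\cI$ is Cartier, with $\cI^r/\cI^{r+1}\simeq\widetilde\cL^r$. The maps $F_{[r]}$ land in the $r$-th infinitesimal neighbourhoods of $D$ and are compatible, so they define $\widehat F$ into the completion along $D$. The $2$-Cartesian square at the centre is the $r=0$ case of \cref{def:transverse}, where the fibre over $D$ of each $Z_{[r]}$ is $Z$. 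The stated local dimensions come from adding one even parameter to $Z_\eta^\eps$, which has dimension $1|3$ or $1|2$.

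\emph{Local algebraic maps.} On a compatible affine étale chart $V=\Spec R$, the morphisms $\Spec B[t]/(t^{r+1})\to V$ are compatible. Since $R$ is of finite type and $B[[t]]=\varprojlim B[t]/(t^{r+1})$, they assemble to a ring map $\varphi:R\to B[[t]]$. Transversality means $\varphi(u)$ generates $\cI=(t)$ modulo $t^{r+1}$ at each order, with class a unit times $t$ in $\cI/\cI^2$. Since multiplication by $t$ is injective on $B[[t]]$, we can write $\varphi(u)=e\,t$ with $e\in B[[t]]$ even, and $e$ is a unit because its reduction modulo $t$ is. Inverting $t$ therefore makes $\varphi(u)$ invertible, so $\Spec B[[t]][t^{-1}]\to V$ factors through $V\setminus D_V$. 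Composing with the atlas map lands in $\mathfrak X_g^\eps\setminus D=\M_g^\eps$.

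The main obstacle is the bookkeeping needed to choose the source charts, the target charts, and the lifts compatibly across all orders, so that a single $\varphi$ exists on one fixed $B$. I would handle this with the fixed-affine argument at the end of the proof of \cref{cor:equivariant-formal}, lifting the chart $U\to V$ once by formal étaleness and propagating it order by order.
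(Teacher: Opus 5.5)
Your proposal is correct and follows essentially the same route as the paper. You verify the hypotheses of \cref{cor:equivariant-formal} for $(F_0,Z_\eta^\eps,\cQ,\widetilde\cL,A)$ using \cref{prop:boundary-bundles}, \cref{lem:central-differential} and \cref{lem:all-orders}, pass to the limit on a fixed affine chart to obtain $B[[t]]$, and then write $\varphi(u)=e\,t$ with $e$ a unit and localise.

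The differences are cosmetic. The paper obtains $\varphi$ as the composite $R\to\widehat R_{(u)}\to B[[t]]$; in your version the finite-type remark is unnecessary, since the inverse limit already has the universal property. The paper derives the Cartesian square from the fact that the pulled-back ideal is $(\varphi(u))=(t)$ with $B[[t]]/(t)=B$, which is a cleaner justification than calling it the ``$r=0$ case'' of \cref{def:transverse}.
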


\begin{proof}
We first check that the central map satisfies the hypotheses of
\cref{cor:equivariant-formal}.  In the notation of that
corollary, the data are
$$
 (f,Z,Q_f,A_f,H)
 =(F_0,Z_\eta^\eps,\cQ,\widetilde\cL,A).
$$
The smoothness of the target and its Cartier boundary divisor
were recorded in \cref{prop:boundary-bundles}.  The differential
on the full central superscheme is a locally split injection
by \cref{lem:central-differential}.  The source action and
the coherent comparison arrows for $F_0$ were constructed
in \eqref{eq:central-clutching-diagram}, with the trivial action
on the target pair as specified there.  Finally,
\cref{lem:all-orders} proves
$$
 H^1\bigl(Z_\eta^\eps,
       (\cQ\otimes\widetilde\cL^r)_{\bar0}\bigr)^A=0
 \qquad\text{for every }r\geq1,
$$
which is exactly \eqref{eq:all-order-criterion}.

To spell out the resulting induction, start with
$\mathfrak Y_{[0]}=Z_\eta^\eps$ and the map $F_0$.
At $r=1$, the obstruction is the extension class of the
normal sequence in \cref{lem:first-normal}.  It is invariant,
so the first-order vanishing just proved makes it zero.
If an equivariant order-$(r-1)$ extension has
been chosen, \cref{cor:equivariant-formal} places its next
obstruction in the displayed zero invariant group and supplies
an equivariant order-$r$ extension.  The source action and
comparison $2$-isomorphisms extend coherently by that
corollary.  We obtain maps
$$
 F_{[r]}:\mathfrak Y_{[r]}\longrightarrow\mathfrak X_g^\eps,
 \qquad F_{[r]}|_{\mathfrak Y_{[r-1]}}=F_{[r-1]},
$$
with the prescribed identifications of each preceding order.
These restrictions are understood with their compatible
comparison isomorphisms.

The fixed-affine construction in the proof of
\cref{cor:equivariant-formal} applies to these thickenings:
on $U=\Spec B\subset Z_\eta^\eps$ trivialising
the full conormal line, it supplies compatible sections and
parameters identifying their rings with $B[t]/(t^{r+1})$.
{Their inverse limit is $B[[t]]$; the compatible
transition maps and target arrows give
$\widehat{\mathfrak Y}_\eta^\eps$ and $\widehat F$.}

The boundary ideal is locally $(t)$, and multiplication
by $t$ is injective in $B[[t]]$, even when $B$
has odd nilpotents.  Hence $\cI$ is Cartier, and its
zero locus is exactly the full central superscheme.  This
proves the Cartesian assertion.  Pullback of the boundary
conormal identifies $\cI/\cI^2$ with
$\widetilde\cL$; multiplication then gives
$$
 \cI^r/\cI^{r+1}
 \simeq(\cI/\cI^2)^{\otimes r}
 \simeq\widetilde\cL^r.
$$
Finally, adjoining the single even formal variable to a source
of dimension $1|3$, respectively $1|2$, gives the
stated local coordinate dimensions.

For the localised maps, choose an affine target chart
$V=\Spec R$ and a compatible source chart as in the
statement.  The formal map gives a continuous homomorphism
$$
 \widehat R_{(u)}=\varprojlim_r R/(u^{r+1})
 \longrightarrow B[[t]].
$$
Composing with $R\to\widehat R_{(u)}$ gives
$\varphi:R\to B[[t]]$.  Since $u$ and $t$
generate the target and pulled-back boundary ideals, respectively,
$\varphi(u)=e\,t$ for an invertible even element $e$.
After inverting $t$, the image of $u$ is invertible,
so the universal property of localisation gives
$$
 R[u^{-1}]\longrightarrow B[[t]][t^{-1}].
$$
Taking spectra gives the asserted map to $V\setminus D_V$.
The selected NS node is the only singularity permitted in
$\mathfrak X_g^\eps$, so removing $D$ leaves
precisely $\M_g^\eps$.
\end{proof}

\begin{remark}[Localisation after completion]\label{rem:formal-localisation}
The final part of \cref{thm:formal-map} uses the algebraic
spectrum of a completed ring and then inverts the boundary
parameter.  It does not take an open complement inside
$\Spf B[[t]]$: the latter has its underlying points
on $t=0$.  Nor can one invert $t$ at a finite
order, where it is nilpotent.  We first form the inverse limit
and then localise:
$$
 \varprojlim_r B[t]/(t^{r+1})=B[[t]]
 \quad\longrightarrow\quad B[[t]][t^{-1}],
$$
where the arrow is the algebraic localisation map.
Each element of the last ring has a
finite lower bound on its powers of $t$.  Replacing a
local generator $t$ by an invertible multiple gives the
same localisation, so a global choice of parameter is unnecessary.

These local algebraic maps require neither algebraisation of
the formal source nor analytic convergence of its transition
functions.  They allow algebraic data on the smooth locus to
be pulled back to the localised completed rings.
Section~\ref{sec:Laurent} develops the corresponding complete
and localised coefficient sheaves and proves that the central
comparison class survives localisation.
\end{remark}

\section{The comparison class after Laurent localisation}
\label{sec:Laurent}

Section~\ref{sec:boundary} produced the nonzero central
comparison class $c_\eta^\eps$, and
\cref{thm:formal-map} extended its source and map transversely
to the boundary.  We must now show that this class can still
be detected after the boundary parameter is inverted.
A nonzero class on the central curve does not by itself give
this conclusion: a cocycle that has no regular primitive could
acquire a primitive with poles after localisation.

Our argument rules out precisely this possibility.  We represent
the comparison by a regular cocycle on the actual formal source.
If it had a Laurent primitive, we could average that primitive
under tail parity.  Its first possible polar coefficient would
then define an invariant global section of an associated-graded
bundle on $C$.  We prove that all such negative-order
sections vanish.  Removing the possible poles one order at a
time would therefore produce a regular primitive, contradicting
the nonzero central class.

Section~\ref{subsec:complete-coefficients} constructs the
complete and localised coefficient modules and identifies their
boundary filtration.  Section~\ref{subsec:negative-coefficients}
proves the required vanishing of negative invariant sections.
Section~\ref{subsec:completed-cocycle} constructs compatible
local splittings and their cocycle, and identifies its reduction
with $c_\eta^\eps$.  Finally,
Section~\ref{subsec:Laurent-survival} gives the filtered
pole-removal argument and proves Laurent survival.
Section~\ref{sec:algebraic} will use this result to exclude
an algebraic splitting on smooth supermoduli.

\subsection{Complete coefficient modules}
\label{subsec:complete-coefficients}

We first specify where the comparison cocycle and a possible
primitive will live.  At every finite normal order, the odd
differential of the extended map changes the coefficient of the
target primary class to a vector bundle on the source's ordinary
thickening.  Taking compatible inverse limits produces the
complete coefficient module; inverting the boundary ideal then
allows finite-order poles.  Its associated graded is computable
from the central bundles, even though the formal source need
not be the completion of a line bundle over $C$.

Let $\widehat{\mathfrak Y}_\eta^\eps$,
$\widehat F$, and $\cI$ be the formal source,
map, and pulled-back Cartier boundary ideal of
\cref{thm:formal-map}.  The zero locus of $\cI$ is
the full central superscheme $Z_\eta^\eps$, whose
reduction is $C$.  Let $\cJ$ be the ideal generated
by odd functions on the formal source, and put
$$
 \widehat\cA=\cO_{\widehat{\mathfrak Y}_\eta^\eps}/\cJ,
 \qquad \cA_{[a]}=\widehat\cA/\cI^{a+1}
 \quad(a\geq0).
$$
In the second expression, and throughout this section, $\cI$
also denotes its image in the bosonic structure sheaf
$\widehat\cA$.  We have
$\cA_{[0]}=\cO_C$ and
$\widehat\cA\simeq\varprojlim_a\cA_{[a]}$.
{On an adapted open of $C$, these rings are locally
$\cO_C[[t]]$ and $\cO_C[t]/(t^{a+1})$, respectively.}

Write $\mathfrak Y_{[a]}$ for the order-$a$
source thickening, defined by $\cI^{a+1}$, and
$F_{[a]}$ for the corresponding map constructed in
\cref{thm:formal-map}.  Its bosonic truncation is an ordinary
scheme $(C,\cA_{[a]})$, with reduced scheme $C$.
The induced map is
\begin{equation} \label{fa}
f_a:(C,\cA_{[a]})\longrightarrow
 X:=(\mathfrak X_g^\eps)_{\red}.
\end{equation}

Let $\cJ_{[a]}$ be the odd ideal of
$\mathfrak Y_{[a]}$.  Its odd cotangent bundle and its
dual are
$$
 \cF_{[a]}=\cJ_{[a]}/\cJ_{[a]}^2,
 \qquad
 \cV_{[a]}=\Hom_{\cA_{[a]}}(\cF_{[a]},\cA_{[a]}).
$$
These are locally free $\cA_{[a]}$-modules, of rank
three for $\eps=+$ and two for $\eps=-$.
At order zero they are $\cF^\eps$ and
$(\cF^\eps)^\vee$, respectively.  The morphism
$F_{[a]}$ induces a map of odd cotangent bundles; dualising
it gives the odd differential
\begin{equation} \label{dF}
 dF_{a,-}:\cV_{[a]}\longrightarrow
 f_a^*T_-\mathfrak X_g^\eps.
\end{equation}

The comparison of primary classes in
\eqref{eq:primary-naturality} takes values in homomorphisms
from an exterior square of source odd tangent directions to
target even tangent directions.  This motivates the coefficient
at each order,
which is the specialization to our situation of \eqref{mixed},
and its two successive enlargements:
\begin{equation}\label{eq:comparison-module}
\begin{split}
 \cG_{[a]}&=\Hom_{\cA_{[a]}}
 \left(\bigwedge^2\cV_{[a]},f_a^*T_+\mathfrak X_g^\eps\right),\\
 \widehat\cG&=\varprojlim_a\cG_{[a]},
 \qquad
 \cG^\times=\widehat\cG\otimes_{\widehat\cA}\cA^\times,
 \qquad
 \cA^\times=\widehat\cA[\cI^{-1}].
\end{split}
\end{equation}
All these objects are viewed as sheaves on the underlying space
$C$.  A section of $\widehat\cG$ is called
\emph{regular} here: it is defined through all formal orders
with no pole along the boundary.  A section of $\cG^\times$
is a \emph{Laurent section}.  Locally it is a section of
$\widehat\cG$ divided by a finite power of $t$.
{The order of these operations is explained in
\cref{rem:formal-localisation}.}

\smallskip

The following table distinguishes the bundles on the central
source from the coefficient modules on its thickenings.  The
quotient $\cQ$ controls transverse extensions; the modules
$\cG_{[a]}$ compare primary classes after such extensions
have been constructed.  All notation in the table has been
defined above or in Sections~\ref{sec:boundary}--\ref{sec:vanishing}.
\medskip
\begin{center}
\small
\renewcommand{\arraystretch}{1.16}
\begin{tabularx}{\linewidth}{@{}>{\raggedright\arraybackslash}p{.18\linewidth}
 >{\raggedright\arraybackslash}p{.21\linewidth}
 >{\raggedright\arraybackslash}X@{}}
\toprule
Object & Base or coefficient ring & Role \\
\midrule
$L$ & $C$ & Ineffective theta characteristic of the core. \\
$\cL$, $\widetilde\cL$ & $C$, $Z_\eta^\eps$ &
Boundary conormal line bundles, with
$\cL=\widetilde\cL|_C=L\otimes R_E^{-1}$. \\
$\cF_\eta$, $\cF^\eps$ & $C$ &
Odd cotangent bundles of $X_\eta$ and $Z_\eta^\eps$. \\
$\cF_{[a]}$ & $\cA_{[a]}$ &
Odd cotangent bundle of $\mathfrak Y_{[a]}$. \\
$\cQ$ & $Z_\eta^\eps$ &
Tangential quotient $F_0^*T_D/dF_0(T_{Z_\eta^\eps})$. \\
$\cG_{[a]}$ & $\cA_{[a]}$ &
Primary-class comparison coefficient at order $a$. \\
$\widehat\cG$, $\cG^\times$ &
$\widehat\cA$, $\cA^\times$ &
Complete and Laurent comparison coefficients. \\
\bottomrule
\end{tabularx}
\end{center}

\begin{lemma}[The boundary filtration]\label{lem:complete-filtration}
The module $\widehat\cG$ is locally free of finite rank
over $\widehat\cA$, and
$\widehat\cG/\cI^{a+1}\widehat\cG\simeq\cG_{[a]}$
for every $a\geq0$.  In particular, it is complete and
separated for the boundary filtration:
$$
 \widehat\cG\simeq
 \varprojlim_a\bigl(\widehat\cG/\cI^{a+1}\widehat\cG\bigr).
$$
For $m>0$, regard the inverse line bundle
$\cI^{-m}=\Hom_{\widehat\cA}(\cI^m,\widehat\cA)$
as a subsheaf of $\cA^\times$; where $\cI=(t)$,
it is $t^{-m}\widehat\cA$.  Define, for every integer
$n$,
$$
 \mathrm F^n\cG^\times=\cI^n\widehat\cG
 \subset\cG^\times.
$$
Then $\mathrm F^0\cG^\times=\widehat\cG$, and there
are canonical equivariant identifications
\begin{equation}\label{eq:full-Laurent-gr}
 \gr^n\cG^\times
 :=\frac{\mathrm F^n\cG^\times}{\mathrm F^{n+1}\cG^\times}
 \simeq T\otimes\bigwedge^2\cF^\eps\otimes\cL^n,
 \qquad n\in\mathbb Z.
\end{equation}
 {For $i$ the test morphism \eqref{inclusion},}
$T=i^*T_+\mathfrak X_g^\eps$, and
$\cL=L\otimes R_E^{-1}$ are the central tangent and
conormal bundles of \cref{prop:boundary-bundles}.
Every Laurent section on a quasi-compact open subset of $C$
belongs to $\mathrm F^{-m}\cG^\times$ for some finite
$m\geq0$.
\end{lemma}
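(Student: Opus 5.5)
I will work locally on an adapted affine open $U\subset C$ where, by the fixed-affine construction in the proof of \cref{cor:equivariant-formal} and \cref{thm:formal-map}, the formal source is $\Spf B[[t]]$. Its bosonic quotient is then $\widehat\cA|_U\simeq\cO_U[[t]]$, with $\cA_{[a]}|_U\simeq\cO_U[t]/(t^{a+1})$ and $\cI=(t)$. The local freeness statements will follow in three steps. First, $\cF_{[a]}$ is locally free of rank $\nu_\eps$ over $\cA_{[a]}$ because each $\mathfrak Y_{[a]}$ is a smooth superscheme locally of the form $Z\times\Spec\C[t]/(t^{a+1})$. Second, $f_a^*T_+\mathfrak X_g^\eps$ is a pulled-back vector bundle. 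Third, $\cG_{[a]}$ is therefore a free $\cA_{[a]}$-module on $U$ once the bundles are trivialised.

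The key point is compatibility of these trivialisations in $a$. The transition maps $\cA_{[a]}\to\cA_{[a-1]}$ are surjective, and $f_a$ restricts to $f_{a-1}$. Base change of Hom between free modules therefore gives $\cG_{[a]}\otimes_{\cA_{[a]}}\cA_{[a-1]}\simeq\cG_{[a-1]}$, and similarly for $\cF_{[a]}$, since the odd ideal of $\mathfrak Y_{[a-1]}$ is the image of that of $\mathfrak Y_{[a]}$. I will lift a basis successively by Nakayama, exactly as the generators $t_r$ were lifted, and obtain compatible bases. Then $\widehat\cG|_U\simeq\cO_U[[t]]^{\oplus N}$ is free over $\widehat\cA$, with $\widehat\cG/\cI^{a+1}\widehat\cG\simeq\cG_{[a]}$. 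Completeness and separatedness follow from $\varprojlim_a\cO_U[t]/(t^{a+1})=\cO_U[[t]]$.

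For the filtration I will use that $\cI$ is an invertible ideal of $\widehat\cA$, which follows from the Cartier assertion of \cref{thm:formal-map} after passing to the bosonic quotient, where $t$ remains a nonzerodivisor in $\cO_U[[t]]$. Thus $\cI^n\widehat\cG$ makes sense for all $n\in\mathbb Z$ inside $\cG^\times$, and
$$
 \gr^n\cG^\times\simeq(\widehat\cG/\cI\widehat\cG)\otimes_{\cO_C}(\cI^n/\cI^{n+1}).
$$
Here $\cI^n/\cI^{n+1}$ is the image of $\widetilde\cL^n$ on the bosonic reduction, namely $\cL^n$ on $C$; for negative $n$ it is the dual line $\cL^{n}$, obtained from the inverse of the local generator. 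It remains to identify $\widehat\cG/\cI\widehat\cG=\cG_{[0]}$. By definition $\cG_{[0]}=\Hom_{\cO_C}(\bigwedge^2(\cF^\eps)^\vee,T)$, because $f_0=i$ and $\cV_{[0]}=(\cF^\eps)^\vee$; this equals $T\otimes\bigwedge^2\cF^\eps$. This gives \eqref{eq:full-Laurent-gr}.

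Equivariance will follow because all constructions are functorial in the coherent $A$-action on the formal source and its map. The identification $\cI/\cI^2\simeq\widetilde\cL$ is itself induced by pulling back the boundary conormal, which carries the linearisation of \cref{prop:boundary-bundles}. The finite pole-order statement is local: on each of finitely many adapted opens covering a quasi-compact set, a Laurent section is $t^{-m_j}$ times a regular section by the definition of localisation. Taking $m=\max m_j$ gives the claim, since $\mathrm F^{-m}$ is intrinsic and independent of the choice of $t$.

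The main obstacle is canonicity. The identification of $\gr^n$ must not depend on the local parameter $t$ or on the chosen compatible sections $B\to R_r$, since the formal source is not globally a product. I will check that a change $t\mapsto et$ multiplies the $\gr^n$ generator by $e|_C^{\,n}$, which is exactly the transition of $\cL^n$. Changes of section only alter terms of positive $t$-order and so do not affect $\gr^n$.
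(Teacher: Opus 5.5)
Your proposal is correct and follows essentially the same route as the paper: base change of $\cF_{[a]}$, $\cV_{[a]}$ and $f_a^*T_+\mathfrak X_g^\eps$ along the truncations, successive lifting of a basis on a fixed affine thickening to get local freeness and $\widehat\cG/\cI^{a+1}\widehat\cG\simeq\cG_{[a]}$, invertibility of $\cI$ in $\widehat\cA$ to identify $\gr^n\cG^\times$ with $\cG_{[0]}\otimes\cL^n$, and quasi-compactness for the pole bound; your canonicity check under $t\mapsto et$ is the transformation law the paper records right after the lemma. One small slip: $\mathfrak Y_{[a]}$ is not smooth for $a\geq1$, but you only use its local form $B[t]/(t^{a+1})$ with $B$ locally split, and that is what gives local freeness of $\cF_{[a]}$.
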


\begin{proof}
We first check that the finite-order coefficient bundles form
a compatible inverse system.  On a local presentation of the
formal source, the odd cotangent bundle has a basis given by
the odd coordinates modulo their products.  Reducing the even
parameter from order $a+1$ to order $a$ preserves
this basis.  Thus
$$
 \cF_{[a+1]}\otimes_{\cA_{[a+1]}}\cA_{[a]}
 \simeq\cF_{[a]}.
$$
More explicitly, the odd ideal $\cJ_{[a]}$ is the image
of $\cJ$ in $\cO_{\mathfrak Y_{[a]}}$.
On a local product chart with odd generators
$\theta_1,\ldots,\theta_\nu$, we have
$$
 \cF_{[a]}=
 \bigoplus_{j=1}^{\nu}\cA_{[a]}\,\overline\theta_j,
 \qquad \nu=3\text{ or }2
$$
according to the tail parity.  These are the same generators
at consecutive normal orders.  The resulting base-change maps
are canonical, since they are induced by the quotient maps of
the odd ideals, and hence agree on overlaps.  Dualising gives
$$
 \cV_{[a+1]}\otimes_{\cA_{[a+1]}}\cA_{[a]}
 \xrightarrow{\sim}\cV_{[a]}.
$$
The map on odd cotangent bundles is induced by the homomorphism
of structure sheaves of $F_{[a]}$.  Compatibility of the
formal maps therefore gives, under these identifications and
the corresponding identification of the pulled-back target bundles,
\begin{equation}\label{eq:odd-differential-base-change}
 dF_{a+1,-}\otimes_{\cA_{[a+1]}}\cA_{[a]}
 =dF_{a,-}.
\end{equation}
Duals, exterior powers, and the Hom in
\eqref{eq:comparison-module} commute with this base change
because the modules involved are locally free.  Compatibility
 of the maps $f_a$ of \eqref{fa}
consequently gives
$$
 \cG_{[a+1]}\otimes_{\cA_{[a+1]}}\cA_{[a]}
 \simeq\cG_{[a]}.
$$

Choose a sufficiently small affine open subset of $C$ on
which $\cG_{[0]}$ has a basis.  Each of its ordinary
nilpotent thickenings is affine.  The transition
$\cG_{[a+1]}\to\cG_{[a]}$ is surjective with
quasi-coherent kernel on that thickening, so sections lift:
the first cohomology of the kernel vanishes.  Lift the chosen
basis successively at every order.  A lifted tuple is still
a basis, since its reduction is a basis and the intervening
ideal is nilpotent.  Equivalently, the corresponding matrix
is invertible modulo a nilpotent ideal and therefore invertible.
Taking inverse limits of these compatible bases identifies
$\widehat\cG$ locally with a finite free
$\widehat\cA$-module.  This proves its local freeness,
its reductions, and the asserted completeness and separatedness.

At the central order,
$$
 \cG_{[0]}=
 \Hom\!\left(\bigwedge^2(\cF^\eps)^\vee,T\right)
 \simeq T\otimes\bigwedge^2\cF^\eps.
$$
The ideal $\cI$ is an invertible ideal generated locally
by a non-zero-divisor.  Its positive and negative tensor powers
therefore embed in $\cA^\times$, and multiplication gives
$$
 \frac{\cI^n\widehat\cG}{\cI^{n+1}\widehat\cG}
 \simeq \cG_{[0]}\otimes_{\cO_C}(\cI/\cI^2)^n.
$$
For a negative $n$, the last factor means the corresponding
dual tensor power.  The bosonic conormal line bundle
$\cI/\cI^2$ is $\cL$, by
\cref{thm:formal-map}.  This proves
\eqref{eq:full-Laurent-gr} for all integers $n$.
Locally the quotient is
$t^n\widehat\cG/t^{n+1}\widehat\cG$; the intrinsic
multiplication construction makes the identification equivariant.

Finally, a section of the localisation is locally a fraction
with a finite power of a boundary generator in its denominator.
On a quasi-compact open subset, choose finitely many such
presentations and let $m$ be the largest denominator order.
Every restriction then lies in $\cI^{-m}\widehat\cG$.
Membership in this subsheaf is local, so the original section
lies there as well.  Injectivity into the localisation follows
from the fact that a local generator of $\cI$ acts
injectively on the locally free module $\widehat\cG$.
\end{proof}

We will use the lemma to control changes of formal coordinates,
not to split the complete module into its graded pieces.
For example, on an overlap choose local boundary generators
$t_i,t_j$ and local frames $e_i,e_j$ of
$\widehat\cG$, and write
$$
 t_j=v_{ij}t_i,\qquad e_j=P_{ij}e_i,
$$
with $v_{ij}$ a unit and $P_{ij}$ an invertible
matrix over the completed structure sheaf.  The transition
for $t_j^n e_j$, modulo order $n+1$, is then
$\overline v_{ij}^{\,n}\overline P_{ij}$, where a bar
means reduction modulo $\cI$.  These are exactly the
transitions of $\cG_{[0]}\otimes\cL^n$ in
\eqref{eq:full-Laurent-gr}.  Higher-order terms can change
higher coefficients but cannot change this symbol.
{This is the transformation law used to remove poles below.}

\subsection{The negative invariant coefficients}
\label{subsec:negative-coefficients}

We now identify the possible first polar coefficient of a
primitive and prove that it must vanish.  On an open cover
$\mathcal U=\{U_i\}$, a \emph{Laurent primitive}
of a regular \v{C}ech cocycle $\gamma$ is a collection
$b_i\in\cG^\times(U_i)$ satisfying
$$
 \gamma_{ij}=b_j-b_i\quad\text{on }U_i\cap U_j.
$$
Suppose the sections $b_i$ have poles of order at most
$m>0$.  Their symbols at order $-m$ are their
images in
$\mathrm F^{-m}\cG^\times/\mathrm F^{-m+1}\cG^\times$.
If $-m$ is the first order at which a coefficient is
nonzero, these are the \emph{leading negative symbols}.
Regularity of $\gamma$ makes the symbols agree on
overlaps, since $\gamma$ has zero symbol at every
negative order.  They therefore define a global section of
$\gr^{-m}\cG^\times$.

The cocycle to be constructed will be invariant under tail
parity, so any proposed primitive can first be averaged under
the involution $\sigma$.  It is consequently enough to
exclude invariant global sections at all negative orders.
We compute the invariant summands and prove their vanishing
now; the finite pole-removal argument is given in
\cref{lem:lowest-Laurent}.  The only term not settled by
negative degree occurs in the even component at order $-1$,
where the nonsplit extension $\cF_\eta$ is essential.

Recall that $\sigma=a^2$ is the tail-parity involution
of the group $A$ used in Section~\ref{sec:vanishing}.
It fixes the underlying curve and acts by $+1$ on
$\cF_\eta,\cE_+$, and by $-1$ on
$N,\cL$ and on $Y$ when present.
The central decompositions from Section~\ref{sec:boundary} are
$$
 T=\cE_+\oplus N,
 \qquad
 \bigwedge^2\cF^-=S\otimes L,
 \qquad
 \bigwedge^2\cF^+=S\otimes L\oplus\cF_\eta\otimes Y.
$$
The factor $\cL^n$ contributes sign $(-1)^n$.
Multiplying the signs and keeping the $+1$-eigenspace
in \eqref{eq:full-Laurent-gr} gives, for the odd component,
\begin{equation}\label{eq:odd-Laurent-gr}
 (\gr^n\cG^\times)^\sigma=
 \begin{cases}
 S\otimes\cE_+\otimes L\otimes\cL^n,&n\text{ even},\\
 S\otimes N\otimes L\otimes\cL^n,&n\text{ odd},
 \end{cases}
\end{equation}
and, for the even component,
\begin{equation}\label{eq:even-Laurent-gr}
 (\gr^n\cG^\times)^\sigma=
 \begin{cases}
 (S\otimes\cE_+\otimes L \ \oplus \ \ N\otimes\cF_\eta\otimes Y) \
       \otimes\cL^n,&n\text{ even},\\
 (S\otimes N\otimes L \ \ \oplus \ \cE_+\otimes\cF_\eta\otimes Y) \
       \otimes\cL^n,&n\text{ odd}.
 \end{cases}
\end{equation}
These are direct-sum decompositions of the associated
graded bundles: the complete formal module need not carry
corresponding direct-sum decompositions.

\begin{lemma}[No negative invariant sections]\label{lem:no-negative-H0}
For both parities and every integer $n<0$,
$$
 H^0\bigl(C,(\gr^n\cG^\times)^\sigma\bigr)=0.
$$
\end{lemma}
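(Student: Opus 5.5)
Because every summand in \eqref{eq:odd-Laurent-gr} and \eqref{eq:even-Laurent-gr} is a constant line tensored with a bundle on $C$ built from $L$, $\cF_\eta$, $\cE_+$ and $N$, the proof reduces to a degree count term by term, using $\cL^n=L^n\otimes R_E^{-n}$ and $N=L^{-1}\otimes R_E$. The constant lines $S,Y,R_E$ play no role and will be dropped. For $n<0$ the summands become, up to such constants:
\[
 S\otimes\cE_+\otimes L^{1+n},\qquad S\otimes N\otimes L^{1+n}\simeq L^{n},
\]
\[
 N\otimes\cF_\eta\otimes L^{n}\simeq\cF_\eta\otimes L^{n-1},\qquad \cE_+\otimes\cF_\eta\otimes L^{n}.
\]
The terms $L^n$ and $\cF_\eta\otimes L^{n-1}$ are handled at once. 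The first has negative degree. For the second, tensor \eqref{eq:Feta} by $L^{n-1}$: the subbundle $S\otimes L^{n-1}$ and the quotient $L^{n}$ both have negative degree, so there are no sections.

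For the $\cE_+$ terms I use \eqref{eq:Eplus}, namely $0\to L^{-2}\to\cE_+\to W_0\otimes\cO_C\to0$. Tensoring with $L^{1+n}$ with $n\le-1$ gives a subbundle $L^{n-1}$ with no sections and a quotient $W_0\otimes L^{1+n}$. For $n\le-2$ this quotient has negative degree. For $n=-1$ it is trivial, but the connecting map $W_0=H^0(W_0\otimes\cO_C)\to H^1(C,L^{-2})=W_C$ is the Kodaira--Spencer-type coboundary of \eqref{eq:pointed-core-tangent}, so I must check that it is injective. On the summand $W_C$ it is the identity, since it comes from the sequence for $B=T_C$, whose connecting map $H^0(H^1(C,B)\otimes\cO_C)\to H^1(C,B)$ is the identity because $H^0(C,\cP_B)=0$. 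That vanishing needs to be argued separately: the fibres of $\cP_B$ are pointed deformation spaces, and a section of $\cP_B$ would contradict $H^0(C,B)=0$ once the pushed-forward sequence has identity connecting map. The summand $W_E\otimes\cO_C$, however, splits off as a direct summand of $\cE_+$ and does contribute a constant section. So I expect $H^0(C,\cE_+\otimes L^{0})$ to be nonzero, and the $n=-1$ odd-component term $S\otimes\cE_+\otimes L^{1+n}$ is not killed by degree alone. I suspect I have mis-assigned the parity, and I will check it. For $n=-1$ odd, \eqref{eq:odd-Laurent-gr} gives $S\otimes N\otimes L\otimes\cL^{-1}\simeq L^{-1}$, which is fine. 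The $\cE_+$ terms at negative $n$ appear only for $n\le-2$ even, where the degree is $\le-1$, and the subbundle $L^{-3}$ also has no sections. So in the odd component everything vanishes by degree.

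In the even component the remaining term is $\cE_+\otimes\cF_\eta\otimes L^{n}$ for $n$ odd negative. For $n\le-3$, filter using \eqref{eq:Eplus} and \eqref{eq:Feta}: every graded piece is a constant times $L^{j}$ with $j<0$, so there are no sections. The main obstacle is $n=-1$, the term $\cE_+\otimes\cF_\eta\otimes L^{-1}$, which is the exceptional term flagged in the introduction. Its pieces include $W_0\otimes\cF_\eta\otimes L^{-1}$, whose $\cF_\eta\otimes L^{-1}$ sits in $0\to S\otimes L^{-1}\to\cF_\eta\otimes L^{-1}\to\cO_C\to0$. The connecting map $H^0(\cO_C)\to S\otimes H^1(C,L^{-2})$ sends $1$ to the extension class $\pm s\otimes\eta\ne0$ pushed into $H^1(L^{-2})$. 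I would check this carefully, since the class actually lives in $S\otimes H^1(C,L^{-1})$ after twisting, and the twist must be handled correctly. Hence $H^0(\cF_\eta\otimes L^{-1})=0$ by nonsplitness. For the other piece, $L^{-2}\otimes\cF_\eta\otimes L^{-1}$, the filtration pieces $S\otimes L^{-3}$ and $L^{-2}$ are both negative. The sub–quotient argument for \eqref{eq:Eplus} then gives $H^0=0$ in this case, which completes the proof.
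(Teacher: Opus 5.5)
Your argument is correct and is essentially the paper's proof. Every invariant summand vanishes by negative degree through \eqref{eq:Eplus} and \eqref{eq:Feta}, except the even-component term $\cE_+\otimes\cF_\eta\otimes L^{-1}$ at $n=-1$, which you handle exactly as the paper does: first $H^0(C,\cF_\eta\otimes L^{-1})=0$ from the nonsplit extension, then \eqref{eq:Eplus} tensored with $\cF_\eta\otimes L^{-1}$.

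Two corrections for the write-up:
\begin{itemize}
\item The connecting map is $\C\to S\otimes H^1(C,L^{-1})$, with $1\mapsto\pm s\otimes\eta$. The target $S\otimes H^1(C,L^{-2})$ you first wrote is wrong, and nothing is pushed into it. What makes the map detect sections is $H^0(C,S\otimes L^{-1})=0$.
\item Delete the detour about $S\otimes\cE_+$ at $n=-1$. That summand is $\sigma$-anti-invariant, and it does have sections coming from $W_E$. This is exactly why averaging under $\sigma$ is needed.
\end{itemize}
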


\begin{proof}
We use $\deg L>0$ and the two exact sequences
\eqref{eq:Eplus} and \eqref{eq:Feta}.  Constant vector-space
factors do not affect the vanishing of sections.  Thus, after
the invariant summands have been selected as above, we can
suppress such factors and replace
$N=L^{-1}\otimes R_E$ and
$\cL=L\otimes R_E^{-1}$ by their powers of $L$.

We first handle the summands that occur in both components.
For $n=-2m$, with $m\geq1$, the relevant bundle
is $\cE_+\otimes L^{1-2m}$.  Tensoring
\eqref{eq:Eplus} by $L^{1-2m}$ gives
$$
 0\longrightarrow L^{-2m-1}
 \longrightarrow\cE_+\otimes L^{1-2m}
 \longrightarrow W_0\otimes L^{1-2m}
 \longrightarrow0.
$$
Both outer bundles have no sections, since all their line-bundle
factors have negative degree.  The middle bundle therefore has
no sections either.  For $n=-(2m+1)$, with $m\geq0$,
the common summand is just $L^{-(2m+1)}$, up to a
constant factor, so the same degree argument applies.
This proves the whole assertion for the odd component,
including order $-1$.

For the even component we must also treat the second summand
in each line of \eqref{eq:even-Laurent-gr}.  If
$n=-2m$, with $m\geq1$, this is
$\cF_\eta\otimes L^{-(2m+1)}$.  Tensoring
\eqref{eq:Feta} shows that its two factors are
$S\otimes L^{-(2m+1)}$ and $L^{-2m}$, both
of negative degree.  If $n=-(2m+1)$, with $m\geq1$,
the extra bundle is
$\cE_+\otimes\cF_\eta\otimes L^{-(2m+1)}$.
The filtrations of $\cE_+$ and $\cF_\eta$
induce a filtration with successive factors, suppressing
constant multiplicities,
$$
 L^{-(2m+3)},\quad L^{-(2m+2)},\quad
 L^{-(2m+1)},\quad L^{-2m}.
$$
Again all degrees are negative.  Applying the left-exact
global-section functor to the successive short exact sequences
proves vanishing for the whole bundle. 

The remaining case is the extra even-component summand at
$n=-1$.  We must prove
$H^0(C,\cE_+\otimes\cF_\eta\otimes L^{-1})=0$.
The preceding filtration now has a factor $\cO_C$,
so negative degree alone does not suffice.  The issue is
whether its constant section lifts through the extension.
Tensor \eqref{eq:Feta} by $L^{-1}$ to obtain
\begin{equation}\label{eq:exceptional-extension}
 0\longrightarrow S\otimes L^{-1}
 \longrightarrow\cF_\eta\otimes L^{-1}
 \longrightarrow\cO_C\longrightarrow0.
\end{equation}
Since $C$ is connected and proper and
$H^0(C,L^{-1})=0$, the cohomology sequence begins
$$
 0\longrightarrow H^0(C,\cF_\eta\otimes L^{-1})
 \longrightarrow\C
 \xrightarrow{\ \partial_\eta\ }S\otimes H^1(C,L^{-1}).
$$
The connecting map sends $1$ to the extension class
$\pm s\otimes\eta\ne0$.  Indeed, choose local lifts
of $1$ in \eqref{eq:exceptional-extension}; their
differences are the cocycle defining that extension.
The map $\partial_\eta$ is therefore nonzero and,
because its domain is one-dimensional, injective.  Consequently
\begin{equation}\label{eq:key-H0}
 H^0(C,\cF_\eta\otimes L^{-1})=0.
\end{equation}
To include the factor $\cE_+$, tensor
\eqref{eq:Eplus} by $\cF_\eta\otimes L^{-1}$:
$$
 0\longrightarrow\cF_\eta\otimes L^{-3}
 \longrightarrow\cE_+\otimes\cF_\eta\otimes L^{-1}
 \longrightarrow W_0\otimes\cF_\eta\otimes L^{-1}
 \longrightarrow0.
$$
The first bundle has negative-degree factors $L^{-3}$
and $L^{-2}$, and the last has no sections by
\eqref{eq:key-H0}.  The middle bundle therefore has no
sections.  This completes the exceptional case and the proof.
\end{proof}

\subsection{A compatible completed cocycle}
\label{subsec:completed-cocycle}

The preceding vanishing will apply once we have a regular
cocycle through all formal orders, with a known nonzero
central reduction.  We now construct that cocycle from a
vector-bundle extension representing the primary class on
$\mathfrak X_g^\eps$.  At each order we pull the
extension back along $f_a$ and change its coefficient
using $dF_{a,-}$.  We then choose local splittings
compatibly as the order increases, and take their differences.
This constructs the completed cocycle itself, rather than
only a list of finite-order cohomology classes.

All finite-order extensions below are extensions of vector
bundles on ordinary schemes mapping to the reduced target
stack.  We use the primary class of the smooth ambient
superstack $\mathfrak X_g^\eps$, which includes the
boundary divisor $D$; we do not need to assign a primary obstruction
class to a nonreduced source thickening or to its localisation.

On $X=(\mathfrak X_g^\eps)_{\red}$, put
$$
 K={\mathcal Hom}(\bigwedge^2T_-\mathfrak X_g^\eps,
              T_+\mathfrak X_g^\eps).
$$
Choose a vector-bundle extension representing
$\omega_2^{\rm alg}(\mathfrak X_g^\eps)$:
\begin{equation}\label{eq:Green-extension}
 0\longrightarrow K\longrightarrow B
 \xrightarrow{p}\cO_X\longrightarrow0.
\end{equation}
Such an extension exists because
$\Ext^1(\cO_X,K)=H^1(X,K)$.  Its quotient is locally
free, so the extension is locally split and its middle term
is locally free.  It is an extension on the stack itself,
including its descent data.

At order $a$, precomposition with the odd differential
defines the coefficient map
$$
 \phi_a:f_a^*K\longrightarrow\cG_{[a]},
 \qquad b\longmapsto b\circ\bigwedge^2dF_{a,-},
$$
 where $f_a$ is the map in \eqref{fa},
$\cG_{[a]}$ is defined in \eqref{eq:comparison-module},
and $dF_{a,-}$ is the odd differential \eqref{dF} induced by the map
$\widehat F$ of Theorem \ref{thm:formal-map}.
Pull back \eqref{eq:Green-extension} and push out along
$\phi_a$.  If $\cP_{[a]}$ is the middle term
of the pushout, the resulting diagram has exact rows:
\begin{equation}\label{eq:comparison-pushout}
\begin{tikzcd}[column sep=small,row sep=large]
 0\arrow[r]&f_a^*K\arrow[r]\arrow[d,"\phi_a"]
 &f_a^*B\arrow[r]\arrow[d]
 &\cA_{[a]}\arrow[r]\arrow[d,equal]&0\\
 0\arrow[r]&\cG_{[a]}\arrow[r]
 &\cP_{[a]}\arrow[r,"q_a"]
 &\cA_{[a]}\arrow[r]&0.
\end{tikzcd}
\end{equation}
The pullback is exact without a flatness assumption on $f_a$,
since the original sequence is locally split.  The pushout is
an extension with locally free quotient, hence is locally
split as well.  Compatibility of the formal maps and
\eqref{eq:odd-differential-base-change} give identifications
$$
 \cP_{[a+1]}\otimes_{\cA_{[a+1]}}\cA_{[a]}
 \simeq\cP_{[a]},\qquad
 q_{a+1}\otimes\cA_{[a]}=q_a.
$$
Thus these are compatible extensions at all orders, with
specified reduction maps.  The next lemma shows directly
that their inverse limit remains an extension and admits
the local splittings needed for a single cocycle.

\begin{lemma}[Compatible invariant local splittings]
\label{lem:compatible-splittings}
Let $\mathcal U=\{U_i\}$ be a finite affine open cover
of $C$, and put
$\widehat\cP=\varprojlim_a\cP_{[a]}$, using the
middle terms and reduction maps of
\eqref{eq:comparison-pushout}.  There is an exact sequence
of locally free $\widehat\cA$-modules
\begin{equation}\label{eq:completed-extension}
 0\longrightarrow\widehat\cG
 \longrightarrow\widehat\cP
 \xrightarrow{\widehat q}\widehat\cA
 \longrightarrow0.
\end{equation}
It reduces modulo $\cI^{a+1}$ to the lower sequence
of \eqref{eq:comparison-pushout}.  In particular,
$\widehat\cP/\cI^{a+1}\widehat\cP\simeq\cP_{[a]}$,
and these modules are complete and separated for the
$\cI$-adic topology.

There exist local sections
$\lambda_i\in\widehat\cP(U_i)$ such that
$\widehat q(\lambda_i)=1$ and
$\sigma\lambda_i=\lambda_i$.  Equivalently, the maps
$a\mapsto a\lambda_i$ are equivariant local splittings
of \eqref{eq:completed-extension}.  Their differences define
the regular invariant cocycle
\begin{equation}\label{eq:Gamma}
 \Gamma_{ij}=\lambda_j-\lambda_i,
 \qquad
 \Gamma\in\check Z^1(\mathcal U,\widehat\cG)^\sigma.
\end{equation}
Changing the local splittings changes $\Gamma$ by a
coboundary $\delta d$, with
$d\in\check C^0(\mathcal U,\widehat\cG)$.
\end{lemma}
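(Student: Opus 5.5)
The proof has three parts: the completed sequence, the invariant local splittings, and the cocycle.

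\emph{The completed sequence.} Work on a fixed affine open $U\subset C$ and pass to the inverse limit of the lower rows of \eqref{eq:comparison-pushout}. The transition maps $\cG_{[a+1]}\to\cG_{[a]}$, $\cP_{[a+1]}\to\cP_{[a]}$ and $\cA_{[a+1]}\to\cA_{[a]}$ are surjective, with quasi-coherent kernels on the affine nilpotent thickenings. So the Mittag-Leffler condition holds, and $\varprojlim$ preserves exactness on $U$. This gives \eqref{eq:completed-extension}, with $\widehat\cG$ identified by \cref{lem:complete-filtration}.

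For local freeness of $\widehat\cP$ I would use the same lifting-of-bases argument as in that lemma. Take a basis of $\cP_{[0]}$ consisting of a basis of $\cG_{[0]}$ together with a lift of $1$. Lift it compatibly through all orders. It stays a basis at each order, because its reduction is a basis and the intervening ideal is nilpotent. The limit is therefore a free $\widehat\cA$-module. Since the compatible bases are adapted to the identifications $\cP_{[a+1]}\otimes\cA_{[a]}\simeq\cP_{[a]}$, the reductions $\widehat\cP/\cI^{a+1}\widehat\cP\simeq\cP_{[a]}$ hold, and so do completeness and separatedness. Reduction of the sequence itself modulo $\cI^{a+1}$ follows because $\widehat\cA$ is free of rank one, so the sequence splits locally.

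\emph{Invariant splittings.} On each affine $U_i$, lift $1\in\cA_{[a]}(U_i)$ successively through the surjections $q_a$, compatibly with the reduction maps. At each stage the set of lifts at order $a+1$ over a given lift at order $a$ is a nonempty torsor under the kernel of $\cG_{[a+1]}(U_i)\to\cG_{[a]}(U_i)$; it is nonempty because the kernel is quasi-coherent on an affine thickening. This produces $\lambda'_i\in\widehat\cP(U_i)$ with $\widehat q(\lambda'_i)=1$.

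For invariance, the involution $\sigma$ acts on the whole tower. It acts on the formal source by the coherent action of \cref{thm:formal-map}. It acts on the target trivially, but through the comparison arrows, which act on $f_a^*B$ and on $\cG_{[a]}$. The maps $q_a$ are equivariant, and $\sigma$ fixes $1\in\widehat\cA$ because it acts on $\widehat\cA$ by ring automorphisms. I then set $\lambda_i=\tfrac12(\lambda'_i+\sigma\lambda'_i)$, so that $\widehat q(\lambda_i)=1$ and $\sigma\lambda_i=\lambda_i$.

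This step carries the one real issue. Since $\sigma$ fixes $C$ pointwise but acts nontrivially on $\widehat\cA$ (by $t\mapsto -t$ up to units), $\sigma$ must preserve each $U_i$ as an open of $C$, and the action must be semilinear over $\widehat\cA$. I would check that the averaged element is still a section over $U_i$. This holds because $\sigma$ covers the identity on the underlying space $|C|$.

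\emph{The cocycle.} Set $\Gamma_{ij}=\lambda_j-\lambda_i$. This lies in $\ker\widehat q=\widehat\cG$, is $\sigma$-invariant, and satisfies the cocycle identity trivially. If $\lambda_i$ is replaced by $\lambda_i+d_i$, then $\widehat q(d_i)=0$, so $d_i\in\widehat\cG(U_i)$, and $\Gamma$ changes by $\delta d$.

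I expect the main obstacle to be the equivariance bookkeeping: making sure the pushout $\cP_{[a]}$ carries a $\sigma$-action compatible across orders. I would derive it from the coherent comparison $2$-isomorphisms of $\widehat F$, applied to the stack extension $B$ via its descent data.
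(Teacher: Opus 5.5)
Your proposal is correct and follows essentially the same route as the paper. Both arguments build compatible lifts of $1$ order by order, using vanishing of $H^1$ of quasi-coherent kernels on the affine thickenings, then average under the $\sigma$-action induced by the comparison arrows (each $U_i$ is preserved because $\sigma$ fixes $|C|$), and take the differences $\lambda_j-\lambda_i$ as the cocycle. The only difference is organisational: the paper reads off exactness, local freeness and the reductions directly from the compatible splittings $\widehat\cP|_{U_i}\simeq\widehat\cG|_{U_i}\oplus\widehat\cA|_{U_i}$, whereas you first obtain them from Mittag-Leffler and basis lifting.
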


\begin{proof}
Each $(U_i,\cA_{[a]})$ is a nilpotent thickening of an
affine scheme, and hence is affine.  The lower extension in
\eqref{eq:comparison-pushout} splits there, since
$H^1(U_i,\cG_{[a]})=0$.  Equivalently, one can
choose $\lambda_{i,a}\in\cP_{[a]}(U_i)$ with
$q_a(\lambda_{i,a})=1$.  We must make these choices
compatible, not merely obtain a splitting separately at each
order.

Suppose $\lambda_{i,a}$ has already been chosen.
At the next order choose a section satisfying
$$
 \mu_{i,a+1}\in\cP_{[a+1]}(U_i),\qquad
 q_{a+1}(\mu_{i,a+1})=1.
$$
Its reduction minus
$\lambda_{i,a}$ lies in the kernel of $q_a$,
which is $\cG_{[a]}$.  That difference lifts to a
section $d_{i,a+1}\in\cG_{[a+1]}(U_i)$: the
transition map is surjective, with quasi-coherent kernel on
an affine thickening, and the kernel has zero first cohomology.
Then
$$
 \lambda_{i,a+1}=\mu_{i,a+1}-d_{i,a+1}
$$
still maps to $1$, and reduces to $\lambda_{i,a}$.
Starting at order zero and repeating this construction gives
a compatible family of local splittings.

At every order these splittings identify the middle term locally
with $\cG_{[a]}\oplus\cA_{[a]}$, and the
identifications commute with reduction.  Their inverse limit
therefore identifies
$$
 \widehat\cP|_{U_i}\simeq
 \widehat\cG|_{U_i}\oplus\widehat\cA|_{U_i},
$$
with $\widehat q$ the projection to the second factor.
This proves exactness of \eqref{eq:completed-extension},
local freeness, and the assertions about reduction and
completeness.  

We next arrange equivariance.  At each finite order, the
comparison arrow $f_a\circ\sigma\simeq f_a$ induced
by $\widehat F$ identifies
$\sigma^*f_a^*B$ with $f_a^*B$, and similarly
for $K$ and $\cO_X$.  The extension
\eqref{eq:Green-extension} is defined on the target stack,
so these identifications respect its maps.  They satisfy the
group law by coherence of the comparison arrows.
The odd differential and the pushouts inherit
the same compatibility.  Thus $\sigma$ acts on the
extensions at every order and commutes with reduction.
It fixes the underlying curve and hence each open $U_i$,
although it need not act trivially on its complete ring of
functions.

Replace the compatible local sections by
$$
 \lambda_{i,a}^{\rm inv}
 =\tfrac12\bigl(\lambda_{i,a}+\sigma\lambda_{i,a}\bigr).
$$
Their images under $q_a$ remain equal to $1$, since
$\sigma(1)=1$.  Averaging commutes with reduction, so
their limits are invariant sections $\lambda_i$ with
$\widehat q(\lambda_i)=1$.  Each gives the
$\widehat\cA$-linear splitting
$a\mapsto a\lambda_i$; this splitting is equivariant
because
$\sigma(a\lambda_i)=\sigma(a)\lambda_i$.

On an overlap, $\lambda_j-\lambda_i$ has image zero
under $\widehat q$, so it is a section of
$\widehat\cG$.  On a triple overlap its differences
telescope, proving the cocycle identity in \eqref{eq:Gamma}.
If $\lambda_i'$ is another choice, then
$d_i=\lambda_i'-\lambda_i$ lies in
$\widehat\cG(U_i)$, and
$\Gamma'-\Gamma=\delta d$.  This proves the final
assertion.
\end{proof}

The remaining task is to identify its reduction at order zero.
By \cref{lem:complete-filtration}, the coefficient at that
order is
$\widehat\cG/\cI\widehat\cG
=\cG_{[0]}\simeq T\otimes\bigwedge^2\cF^\eps$.
Thus reducing $\Gamma$ means applying this quotient
map to every overlap section $\Gamma_{ij}$.  The
next proposition connects this concrete cocycle to the nonzero
class $c_\eta^\eps$ of \cref{prop:central-class}.

\begin{proposition}[The central symbol]\label{prop:weight-zero}
Let $\Gamma=(\lambda_j-\lambda_i)$ be the cocycle
of \cref{lem:compatible-splittings}, and denote its
coefficientwise reduction by
$$
 \Gamma_0=\Gamma\bmod\cI
 \in\check Z^1\!\left(\mathcal U,
                  T\otimes\bigwedge^2\cF^\eps\right).
$$
Its class in $H^1(C,T\otimes\bigwedge^2\cF^\eps)$
is $c_\eta^\eps\ne0$.  More precisely, it is the
image of
$$
 0\ne s\otimes J_h^\partial(\eta)
 \in H^1(C,S\otimes\cE_+\otimes L)
$$
under the invariant direct-summand inclusion
$$
 S\otimes\cE_+\otimes L
 \hookrightarrow\gr^0\cG^\times
 =T\otimes\bigwedge^2\cF^\eps.
$$
This is the summand displayed at $n=0$ in
\eqref{eq:odd-Laurent-gr} and \eqref{eq:even-Laurent-gr}.
\end{proposition}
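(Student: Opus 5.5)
The plan is to reduce $\Gamma$ modulo $\cI$ and recognise the result as exactly the order-zero pushout extension of \eqref{eq:comparison-pushout}. Then \cref{prop:central-class} supplies the identification with $c_\eta^\eps$.

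First, reduction commutes with everything in sight. By \cref{lem:compatible-splittings}, the completed extension \eqref{eq:completed-extension} reduces modulo $\cI$ to the lower row of \eqref{eq:comparison-pushout} at $a=0$. Under this reduction the invariant local sections $\lambda_i$ go to local sections $\lambda_{i,0}\in\cP_{[0]}(U_i)$ with $q_0(\lambda_{i,0})=1$. Hence $\Gamma_0=(\lambda_{j,0}-\lambda_{i,0})$ is a \v{C}ech cocycle representing the extension class of
$$
 0\to\cG_{[0]}\to\cP_{[0]}\to\cO_C\to0
$$
in $\Ext^1(\cO_C,\cG_{[0]})=H^1(C,\cG_{[0]})$. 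By construction, this extension is the pullback along $f_0=i$ of \eqref{eq:Green-extension}, pushed out along $\phi_0=\iota_{F_0}$; here $dF_{0,-}$ is the odd differential of the central map. Pullback and pushout act on extension classes by $i^*$ and $H^1(\iota_{F_0})$ respectively. Since \eqref{eq:Green-extension} represents $\omega_2^{\rm alg}(\mathfrak X_g^\eps)$, the class of $\Gamma_0$ is $H^1(\iota_{F_0})(i^*\omega_2(\mathfrak X_g^\eps))=c_\eta^\eps$. This is the definition preceding \cref{prop:central-class}. One sign convention must be fixed: \v{C}ech differences of splittings versus the Yoneda class. I would choose it once so that it matches the convention in \eqref{eq:primary-naturality}. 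Changing the convention only multiplies by $\pm1$, which does not affect nonvanishing.

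Second, \cref{prop:central-class} gives $c_\eta^\eps=s\otimes J_h(\eta)$. This is the image of $s\otimes J_h^\partial(\eta)$ under the boundary-tangent inclusion $\cE_+\hookrightarrow T$ and the exterior-square inclusion $S\otimes L\hookrightarrow\bigwedge^2\cF^\eps$. The composite inclusion $S\otimes\cE_+\otimes L\hookrightarrow T\otimes\bigwedge^2\cF^\eps$ is exactly the $n=0$ summand displayed in \eqref{eq:odd-Laurent-gr} and \eqref{eq:even-Laurent-gr}. To see this, note that at $n=0$ no factor of $\cL$ appears. Moreover, $\cE_+$ is the $\sigma=+1$ part of $T$ by \eqref{eq:sigma-splitting}, and $S\otimes L$ is $\sigma$-fixed. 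Finally, $\gr^0\cG^\times=\cG_{[0]}$ by \cref{lem:complete-filtration}. The inclusion is a direct summand, via the $\sigma$-splitting together with the direct sum decomposition of $\bigwedge^2\cF^+$. Therefore injectivity on $H^1$ and $J_h^\partial$ being an isomorphism (\cref{lem:Jh-injection}) give nonvanishing.

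The main obstacle is the first step, namely justifying that the reduction of the completed cocycle really computes the Yoneda class of the order-zero pushout. This requires checking that the identification $\widehat\cP/\cI\widehat\cP\simeq\cP_{[0]}$ is compatible with $\widehat q$ and with the inclusion of $\widehat\cG$, so that reduced splittings are splittings. Both compatibilities are part of the statement of \cref{lem:compatible-splittings}. I would also check that $\phi_0$ agrees with the map $\iota_{F_0}$ used to define $c_\eta^\eps$. This holds because both are precomposition with $\bigwedge^2 dF_{0,-}$ for the same central map $F_0$, whose reduction is $i$.
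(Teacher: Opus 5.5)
Your proposal is correct and follows the paper's proof. You reduce the compatible splittings $\lambda_i$ modulo $\cI$ to splittings of the order-zero pushout extension (the pullback along $f_0=i$ of \eqref{eq:Green-extension}, pushed out along $\phi_0=\iota_{F_0}$), so that $\Gamma_0$ represents $c_\eta^\eps$; you then conclude via \cref{prop:central-class}, the direct-summand inclusion $S\otimes\cE_+\otimes L\hookrightarrow T\otimes\bigwedge^2\cF^\eps$, and \cref{lem:Jh-injection}, exactly as the paper does.
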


\begin{proof}
At order zero, $f_0=i$,
$\cV_{[0]}=(\cF^\eps)^\vee$, and $\phi_0$
is precomposition with $\bigwedge^2dF_{0,-}$.
The lower extension in \eqref{eq:comparison-pushout} at
this order therefore represents exactly the central comparison
class $c_\eta^\eps$ of \cref{prop:central-class}.
The reductions of $\lambda_i$ are local sections of
$\cP_{[0]}$ mapping to $1$, and hence are local
splittings of this extension.  Their differences are
$\Gamma_0$, so its cohomology class is
$c_\eta^\eps$.  That proposition identifies the class
as the image of $s\otimes J_h^\partial(\eta)$.

The inclusion in the statement is a direct-summand
inclusion: use $T=\cE_+\oplus N$ and the
decomposition of $\bigwedge^2\cF^\eps$.
It therefore induces an injection on cohomology.
By \cref{lem:Jh-injection},
$s\otimes J_h^\partial(\eta)\ne0$, so the class
of $\Gamma_0$ is nonzero as well.
\end{proof}

\subsection{Removal of Laurent poles}
\label{subsec:Laurent-survival}

We now combine the two ingredients: $\Gamma$ has a
nonzero central class by \cref{prop:weight-zero}, and every
negative invariant graded bundle has no global sections by
\cref{lem:no-negative-H0}.  The following lemma is the
filtered argument needed for \cref{thm:Laurent}.
Given a Laurent primitive of a regular invariant cocycle,
we average it, choose a common finite pole bound, and look
at its symbol at that order.  The symbol glues globally and
must vanish, improving the bound by one.  Repetition removes
every negative order.  The argument uses only the filtration
and its quotient maps, so it also applies when the complete
transition functions contain higher-order terms.

\begin{lemma}[Removal of poles from a primitive]\label{lem:lowest-Laurent}
Let $\widehat{\mathcal R}$ be a sheaf of complex algebras
on $C$, complete and separated for the powers of a
locally principal Cartier ideal $\cI$, and let
$\widehat{\cM}$ be a locally free
$\widehat{\mathcal R}$-module of finite rank.  Set
$$
 \cM^\times=\widehat{\cM}[\cI^{-1}],\qquad
 \mathrm F^n\cM^\times=\cI^n\widehat{\cM}
 \quad(n\in\mathbb Z).
$$
Suppose a finite group $H$ acts compatibly on the ring,
ideal, and module, fixes the underlying curve, and satisfies
$$
 H^0\bigl(C,(\gr^n\cM^\times)^H\bigr)=0
 \qquad(n<0).
$$
Let $\mathcal U=\{U_i\}$ be a finite cover of $C$
by quasi-compact open subsets, and let
$\gamma\in\check Z^1(\mathcal U,\widehat{\cM})^H$.
If
$$
 \gamma=\delta b
 \quad\text{for some }b\in\check C^0(\mathcal U,\cM^\times),
$$
then there is an invariant regular primitive
$b^{\rm reg}\in\check C^0(\mathcal U,\widehat{\cM})^H$
with $\delta b^{\rm reg}=\gamma$.

In particular, if the reduction of $\gamma$ in
$\widehat{\cM}/\cI\widehat{\cM}$ represents a nonzero
cohomology class on $C$, then $\gamma$ is not a
Laurent coboundary on this cover.
\end{lemma}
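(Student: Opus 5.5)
The plan is a descending induction on the pole order, after two preliminary reductions.

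\emph{Invariance.} Replace $b$ by its average $\bar b=\frac1{|H|}\sum_{h\in H}h\cdot b$. Since $H$ fixes each $U_i$ and acts compatibly on $\widehat\cR$, $\cI$ and $\widehat\cM$, it acts on $\cM^\times$ preserving every $\mathrm F^n$. Because $\gamma$ is invariant and $\delta$ is equivariant, we still have $\delta\bar b=\gamma$. Averaging is available since we work over $\C$.

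\emph{Finite pole bound.} Each $U_i$ is quasi-compact and the cover is finite, so the last assertion of \cref{lem:complete-filtration} (whose proof uses only local principality of $\cI$ and local freeness of the module) gives a single $m\geq0$ with $\bar b_i\in\mathrm F^{-m}\cM^\times(U_i)$ for all $i$. If $m=0$ we are done.

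\emph{Inductive step.} Suppose $m>0$ and let $\beta_i$ be the image of $\bar b_i$ in $\gr^{-m}\cM^\times(U_i)$. On $U_i\cap U_j$ we have $\bar b_j-\bar b_i=\gamma_{ij}\in\widehat\cM=\mathrm F^0\subset\mathrm F^{-m+1}$, so $\beta_i=\beta_j$ there. The $\beta_i$ therefore glue to a global section $\beta\in H^0(C,\gr^{-m}\cM^\times)$, which is $H$-invariant because the $\bar b_i$ are. By hypothesis $\beta=0$, so every $\bar b_i$ lies in $\mathrm F^{-m+1}$. Induction on $m$ lowers the pole bound to $0$, giving an invariant regular primitive $b^{\rm reg}=\bar b$.

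There is no need to lift $\beta$ or to correct $b$ by a global section. Vanishing of $\beta$ alone improves the bound, so no $H^1$ issues or completeness arguments enter at this stage. The only point needing care is that $\mathrm F^n/\mathrm F^{n+1}$ is computed as a sheaf quotient, whereas $\beta_i$ is a section over $U_i$. The image of a section of $\mathrm F^{-m}$ in the quotient sheaf vanishes exactly when the section lies in $\mathrm F^{-m+1}$, since $\mathrm F^{-m+1}\subset\mathrm F^{-m}$ is a subsheaf and membership is local. So the conclusion $\bar b_i\in\mathrm F^{-m+1}(U_i)$ holds on the nose.

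\emph{Final assertion.} Suppose $\gamma=\delta b$ with $b$ Laurent. The above gives $b^{\rm reg}$ regular with $\delta b^{\rm reg}=\gamma$. Reducing modulo $\cI\widehat\cM$ shows that the reduction of $\gamma$ is a \v{C}ech coboundary on $\mathcal U$ with values in $\widehat\cM/\cI\widehat\cM$. Then its class in $\check H^1(\mathcal U,\cdot)$ is zero, and so is its image in $H^1(C,\cdot)$, contradicting the hypothesis.

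I expect no real obstacle in this argument; the delicate inputs, namely the vanishing of invariant negative sections and the filtration identification, are supplied by the hypothesis and by \cref{lem:complete-filtration}.
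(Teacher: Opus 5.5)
Your proposal is correct and follows the paper's proof essentially step for step. You average the Laurent primitive over $H$, fix one finite pole bound using quasi-compactness of the finitely many $U_i$, glue the leading symbols into an invariant global section of $\gr^{-m}\cM^\times$ that vanishes by hypothesis, and iterate until the primitive is regular, finishing with reduction modulo $\cI$. Your remark about the sheaf quotient is the paper's observation that the kernel of the symbol map is $\mathrm F^{-m+1}\cM^\times$.
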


\begin{proof}
Suppose $\gamma=\delta b$ for a Laurent
$0$-cochain $b$.  Since $H$ fixes the underlying
curve, every open subset in the cover is stable under its
action.  We can therefore replace $b$ by
$$
 b^{\rm av}=\frac1{|H|}\sum_{h\in H}h\cdot b.
$$
The differential $\delta$ commutes with the action, and
$\gamma$ is invariant, so
$\delta b^{\rm av}=\gamma$.  Write $b$ for this
averaged cochain from now on.

Each component $b_i$ has a finite pole bound on $U_i$.
Indeed it is locally a section of $\widehat{\cM}$
divided by a power of a generator of $\cI$, and a finite
subcover of the quasi-compact $U_i$ gives a common bound
there.  Since there are finitely many $U_i$, we obtain
one integer $N\leq0$ such that
$$
 b\in\check C^0(\mathcal U,\mathrm F^N\cM^\times)^H.
$$
This need not be the sharp pole bound; any such $N$
will start the argument.

If $N<0$, apply the quotient map to the order-$N$
symbol.  It commutes with restriction and hence with the
\v{C}ech differential, giving
\begin{equation}\label{eq:Laurent-symbol-diagram}
\begin{tikzcd}[column sep=large,row sep=large]
 \check C^0(\mathcal U,\mathrm F^N\cM^\times)
 \arrow[r,"\delta"]\arrow[d,"\operatorname{symb}_N"]
 &\check C^1(\mathcal U,\mathrm F^N\cM^\times)
 \arrow[d,"\operatorname{symb}_N"]\\
 \check C^0(\mathcal U,\gr^N\cM^\times)
 \arrow[r,"\delta"]
 &\check C^1(\mathcal U,\gr^N\cM^\times).
\end{tikzcd}
\end{equation}
The cocycle $\gamma$ is regular, and
$\mathrm F^0\subset\mathrm F^{N+1}$ for $N<0$.
Its order-$N$ symbol is consequently zero.  Since
$\delta b=\gamma$, the symbols of the $b_i$
agree on overlaps and glue to a global section of
$(\gr^N\cM^\times)^H$.  By hypothesis that section
is zero.  The kernel of the symbol map is
$\mathrm F^{N+1}\cM^\times$, so in fact every
$b_i$ belongs to this smaller subsheaf.

We can now repeat the argument at orders $N+1,\ldots,-1$.
After finitely many repetitions,
$b\in\check C^0(\mathcal U,\mathrm F^0\cM^\times)^H$,
so the averaged primitive is regular.  It gives the required
$b^{\rm reg}$.  Reduction of
$\gamma=\delta b^{\rm reg}$ modulo $\cI$
makes the central cocycle a coboundary, proving the final
assertion.  
\end{proof}

\begin{theorem}[Laurent survival]\label{thm:Laurent}
For either parity, let
$\Gamma=(\lambda_j-\lambda_i)
\in\check Z^1(\mathcal U,\widehat\cG)^\sigma$
be the regular cocycle constructed in
\cref{lem:compatible-splittings}.  Its image is not a
coboundary in $\check C^\bullet(\mathcal U,\cG^\times)$.
Equivalently, the localised extension
$$
 0\longrightarrow\cG^\times
 \longrightarrow\widehat\cP\otimes_{\widehat\cA}\cA^\times
 \xrightarrow{\widehat q^\times}\cA^\times
 \longrightarrow0
$$
admits no global splitting as a sequence of
$\cA^\times$-modules on $C$.
\end{theorem}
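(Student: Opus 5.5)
The plan is to apply \cref{lem:lowest-Laurent} with
$\widehat{\mathcal R}=\widehat\cA$, $\widehat\cM=\widehat\cG$, $H=\langle\sigma\rangle$, and $\gamma=\Gamma$, and then derive a contradiction from \cref{prop:weight-zero}. First I check the hypotheses of that lemma. By \cref{lem:complete-filtration}, $\widehat\cA$ is complete and separated for the powers of the locally principal Cartier ideal $\cI$. By the same lemma, $\widehat\cG$ is locally free of finite rank, with $\mathrm F^n\cG^\times=\cI^n\widehat\cG$ and graded pieces \eqref{eq:full-Laurent-gr}. The involution $\sigma$ fixes $C$ and acts compatibly on ring, ideal and module, through the coherent comparison arrows of \cref{thm:formal-map}. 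The identifications in \eqref{eq:full-Laurent-gr} are equivariant, so the invariant graded pieces are exactly \eqref{eq:odd-Laurent-gr} and \eqref{eq:even-Laurent-gr}. Their global sections vanish for every $n<0$ by \cref{lem:no-negative-H0}. Finally, $\Gamma$ is a $\sigma$-invariant regular cocycle on a finite affine cover, by \cref{lem:compatible-splittings}.

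Suppose $\Gamma=\delta b$ with $b\in\check C^0(\mathcal U,\cG^\times)$. Then \cref{lem:lowest-Laurent} gives an invariant regular primitive $b^{\rm reg}$. Reducing modulo $\cI$ makes $\Gamma_0$ a \v{C}ech coboundary in $T\otimes\bigwedge^2\cF^\eps$. The cover consists of affine opens of the separated curve $C$, so \v{C}ech cohomology on $\mathcal U$ computes $H^1(C,-)$. Hence the class of $\Gamma_0$ would vanish. This contradicts \cref{prop:weight-zero}, which identifies that class with $c_\eta^\eps\ne0$. So the image of $\Gamma$ is not a Laurent coboundary.

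For the equivalent formulation, I localise \eqref{eq:completed-extension} along $\cI$. Localisation is exact, so we get an extension of $\cA^\times$-modules, and the localised $\lambda_i$ are local splittings with difference cocycle $\Gamma$. Suppose there were a global splitting $s$. Then $s(1)$ would be a global section mapping to $1$. The elements $b_i=s(1)|_{U_i}-\lambda_i$ lie in $\cG^\times(U_i)$ and satisfy $\delta b=-\Gamma$, so $\Gamma$ would be a Laurent coboundary (up to sign). Conversely, a Laurent primitive of $\Gamma$ corrects the $\lambda_i$ to sections that agree on overlaps; these glue to a global section mapping to $1$, and hence to a global splitting.

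The main obstacle is already absorbed in the earlier lemmas. The substantive point is that \cref{lem:lowest-Laurent} needs vanishing of invariant $H^0$ at all negative orders. That vanishing is exactly \cref{lem:no-negative-H0}, including the exceptional even-component term at $n=-1$. The only remaining care is bookkeeping: the averaging uses $\sigma$-invariance of $\Gamma$, and the \v{C}ech computation on $\mathcal U$ must agree with sheaf cohomology for the coherent sheaf $\cG_{[0]}$, which holds on an affine cover.
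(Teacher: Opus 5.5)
Your proposal is correct and follows the paper's own proof essentially step for step. You apply \cref{lem:lowest-Laurent} with $H=\langle\sigma\rangle$, use \cref{lem:no-negative-H0} and \cref{prop:weight-zero} to reach the contradiction, and pass between global splittings and Laurent primitives via the sections $\lambda_i-\beta|_{U_i}$; your opposite sign for $b$ and the affine-cover remark about \v{C}ech cohomology are harmless bookkeeping differences.
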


\begin{proof}
Apply \cref{lem:lowest-Laurent} with
$\widehat{\mathcal R}=\widehat\cA$,
$\widehat{\cM}=\widehat\cG$, and
$H=\langle\sigma\rangle$.
The Cartier filtration, completeness, and finite pole bounds
are supplied by \cref{lem:complete-filtration}.
The negative invariant global sections vanish by
\cref{lem:no-negative-H0}, and the class of
$\Gamma_0$ is nonzero by \cref{prop:weight-zero}.
Thus a Laurent primitive would be replaced by a regular one,
whose reduction would contradict that nonzero class.

To verify the equivalent formulation, a global splitting of
the localised extension is determined by a section $\beta$
of its middle term with $\widehat q^\times(\beta)=1$.
It would give
$$
 b_i=\lambda_i-\beta|_{U_i}\in\cG^\times(U_i),
 \qquad b_j-b_i=\lambda_j-\lambda_i=\Gamma_{ij},
$$
contradicting the first assertion.  Conversely, a Laurent
primitive $b$ would make the local sections
$\lambda_i-b_i$ agree on overlaps.  They would glue
to such a $\beta$, and multiplication by $\beta$
would define a splitting.  
\end{proof}

\section{Nonvanishing on smooth algebraic supermoduli}
\label{sec:algebraic}

We have constructed a transverse formal map in
Section~\ref{sec:vanishing} and a compatible comparison
extension in Section~\ref{sec:Laurent}.
Theorem~\ref{thm:Laurent} shows that this extension remains
nonsplit after the boundary parameter is inverted.  We now
use that conclusion to prove nonvanishing of the primary
class on the whole smooth algebraic component
$\M_g^\eps$.

The argument is by contradiction.  If the primary class vanished
on the smooth locus, its representing vector-bundle extension
would split there.  We must transfer this splitting to the
Laurent extension just constructed.  It cannot be pulled back
directly along a finite-order map $f_a$, since every
finite-order source is supported on the boundary.  Instead,
algebraicity gives a finite bound on its poles: after twisting
by a multiple of the boundary divisor, the splitting section
extends regularly and can be pulled back at every order.
Completing and then localising recovers a Laurent splitting,
contradicting \cref{thm:Laurent}.

Section~\ref{subsec:finite-pole-pullback} proves this implication,
and Section~\ref{subsec:algebraic-conclusion} assembles the
test construction for every genus and both parities.
The finite pole bound is the specifically algebraic input
here.  The passage to holomorphic nonvanishing will use the
comparison theorem developed in the next two sections.

\subsection{Algebraic sections and finite pole order}
\label{subsec:finite-pole-pullback}

We first isolate the auxiliary extension fact needed in
\cref{prop:finite-poles}.  An algebraic section defined away
from an effective Cartier divisor has only a finite pole order
on each affine chart.  Quasi-compactness gives one bound for
finitely many charts, and uniqueness of extension with that
bound makes the resulting sections descend to the stack.
Lemma~\ref{lem:algebraic-finite-poles} records this fact for
an arbitrary vector bundle.  We then apply it to the section
of the middle term that specifies a splitting, keeping track
of its image in the quotient throughout pullback, completion,
and localisation.

\begin{lemma}[Finite poles along a Cartier divisor]
\label{lem:algebraic-finite-poles}
Let $X$ be a quasi-compact Deligne--Mumford stack over $\C$,
let $D_0\subset X$ be an effective Cartier divisor, and let
$E$ be a vector bundle.  Every algebraic section of
$E|_{X\setminus D_0}$ extends, for some $m\geq0$,
to a section of $E(mD_0)$ on $X$.  Here restriction
to the complement uses the canonical trivialisation of
$\cO_X(mD_0)$ there.  For fixed $m$, such an
extension is unique if it exists.
\end{lemma}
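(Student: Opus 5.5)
The plan is to prove uniqueness first, then existence on an étale atlas, and finally descend the local extensions to $X$ using uniqueness. Throughout, let $s_{D_0}$ denote the canonical section of $\cO_X(D_0)$ cutting out $D_0$. Multiplication by $s_{D_0}^m$ identifies $E$ with a subsheaf of $E(mD_0)$. Both sheaves restrict to the same sheaf on $X\setminus D_0$ under the canonical trivialisation.

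\emph{Uniqueness.} Two extensions of the same section to $E(mD_0)$ differ by a section of the locally free sheaf $E(mD_0)$ that vanishes on $X\setminus D_0$. Work on an étale affine chart $\Spec R$ on which $D_0=(u)$ with $u$ a non-zero-divisor and $E$ is free. A section of a free $R$-module killed in $R[u^{-1}]$ is annihilated by a power of $u$. Since $u$ is a non-zero-divisor, such a section is zero. Sheaf uniqueness on an étale cover then gives uniqueness on $X$.

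\emph{Existence.} Choose an étale atlas $V\to X$ by a scheme. By quasi-compactness we may take it to be a finite disjoint union of affines $\Spec R_k$, and we may shrink so that on each one $D_0=(u_k)$ and $E$ is free. A section on $\Spec R_k[u_k^{-1}]$ is a tuple of elements of $R_k[u_k^{-1}]$. Each element has the form $r/u_k^{m}$, so the tuple lies in $u_k^{-m_k}R_k^{\oplus n}$ for some $m_k$. Because there are finitely many charts, we can take $m=\max_k m_k$. Then the pullback of the section extends to a section of $E(mD_0)$ on $V$. This local extension is independent of the chosen trivialisations, since changing $u_k$ by a unit preserves the pole bound.

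\emph{Descent.} The two pullbacks of the extended section to $V\times_X V$ both extend the same section over the complement of $D_0$, with the same bound $m$. This fibre product is a scheme in the DM case, or at worst an algebraic space, which is enough for étale-local uniqueness. By the uniqueness step the two pullbacks agree. Étale descent for quasi-coherent sheaves on the DM stack then produces a section of $E(mD_0)$ on $X$ extending the original section.

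The only point needing care is uniqueness on the overlaps, which rests on $u$ being a non-zero-divisor. That holds because $D_0$ is effective Cartier, so I do not expect a genuine obstacle.
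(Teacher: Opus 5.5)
Your proof is correct and follows essentially the same route as the paper: a finite étale atlas of affine charts on which $D_0$ is principal, the description of sections over the complement as elements with a finite power of $u$ in the denominator, a maximal pole bound $m$ over the finitely many charts, and descent via injectivity of restriction across an effective Cartier divisor on the overlaps (the same non-zero-divisor argument that gives uniqueness). The only differences are cosmetic: you prove uniqueness first and shrink the charts so that $E$ is free, whereas the paper works with a finite projective module and deduces uniqueness from the injectivity used in the descent step.
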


\begin{proof}
Choose an \'etale atlas consisting of finitely many affine
schemes $V_\alpha$ on which the divisor is principal.
To obtain such an atlas, first refine any \'etale atlas by
affine opens trivialising the invertible ideal of $D_0$,
and then use quasi-compactness of $X$ to select finitely
many whose images cover it.

On one chart $V=\Spec R$, let $u\in R$ be an
equation of the pulled-back divisor, and let $P$ be the
finite projective module corresponding to $E|_V$.
Since $u$ is a non-zero-divisor, it acts injectively
on $P$.  We may therefore regard $P$ and its
twists as submodules of their localisation.  Sections on the
principal open complement are described by
\begin{equation}\label{eq:finite-pole-module}
 P[1/u]=\bigcup_{m\geq0}u^{-m}P.
\end{equation}
The restriction of the given section thus belongs to
$u^{-m_\alpha}P$ for some finite $m_\alpha$.
Taking $m=\max_\alpha m_\alpha$ yields regular
sections of $E(mD_0)$ on all charts with the same twist.

We check their compatibility on the \'etale groupoid of the
atlas.  On an overlap $V_\alpha\times_X V_\beta$,
the two sections agree away from the pulled-back divisor,
because they both restrict to the original section.
Restriction of a locally free sheaf across the complement of
an effective Cartier divisor is injective: locally, a section
that becomes zero after inverting $u$ is annihilated by
a power of $u$, and hence is zero.  This argument also
applies on the overlaps, since the \'etale pullbacks preserve
the Cartier property.  The two sections therefore agree on
the whole overlap and descend to a section on $X$.

The same injectivity shows that two extensions with the same
twist must coincide.  This proves existence and uniqueness.
\end{proof}

We apply the lemma to the extension
$0\to K\to B\xrightarrow{p}\cO_X\to0$ chosen
in \eqref{eq:Green-extension}.  A splitting is determined
by a section of $B$ mapping to $1$.  The following
proposition explains how a splitting on the smooth locus
would give precisely the Laurent splitting excluded in
Section~\ref{sec:Laurent}.

\begin{proposition}[Finite-pole pullback of a splitting]
\label{prop:finite-poles}
Fix the central data and formal map of \cref{thm:formal-map},
and let $\widehat\cP$ be the middle term of the completed
comparison extension \eqref{eq:completed-extension}.
If $\omega_2^{\rm alg}(\M_g^\eps)=0$, there exists
a global section
$$
 \beta^\times\in
 H^0\!\left(C,\widehat\cP
              \otimes_{\widehat\cA}\cA^\times\right),
 \qquad \widehat q^\times(\beta^\times)=1.
$$
Consequently the localised comparison extension splits, and
the cocycle $\Gamma=(\lambda_j-\lambda_i)$ of
\cref{lem:compatible-splittings} is a Laurent coboundary on
the finite cover $\mathcal U$.
\end{proposition}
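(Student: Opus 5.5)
Assume $\omega_2^{\rm alg}(\M_g^\eps)=0$. The first step is to see that the extension \eqref{eq:Green-extension}, restricted to the smooth locus, splits. Restriction of $\omega_2^{\rm alg}(\mathfrak X_g^\eps)$ to the open substack $\M_g^\eps=\mathfrak X_g^\eps\setminus D$ is $\omega_2^{\rm alg}(\M_g^\eps)$, because the primary class is computed chartwise from quadratic-odd terms of transition functions. Over $X^\circ=X\setminus D_{\red}$ the restricted extension therefore has zero class. Since $\Ext^1(\cO,K)=H^1(K)$, this gives a section $\beta^\circ\in H^0(X^\circ,B)$ with $p(\beta^\circ)=1$.

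Next I apply \cref{lem:algebraic-finite-poles} to the vector bundle $B$ on the quasi-compact stack $X$ (replacing $\mathfrak X_g^\eps$ by a quasi-compact open containing the image of $i$ if necessary) and the Cartier divisor $D_0=D_{\red}$. This extends $\beta^\circ$ to $\beta\in H^0(X,B(mD_0))$ for some $m$. Then $p(\beta)\in H^0(X,\cO(mD_0))$ is the canonical section $1$, since both agree off $D_0$ and restriction is injective.

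The third step pulls $\beta$ back along each $f_a$ and pushes it through \eqref{eq:comparison-pushout}. This gives sections $\beta_a\in H^0(C,\cP_{[a]}\otimes f_a^*\cO(mD_0))$ with $q_a(\beta_a)$ equal to the pulled-back canonical section. Compatibility under reduction follows from compatibility of the $f_a$ and the pushouts. Passing to the limit gives a section of $\widehat\cP\otimes_{\widehat\cA}\cI^{-m}$, using $f_a^*\cO(-D_0)=\cI/\cI^{a+1}$. This identification holds because \cref{thm:formal-map} gives $\varphi(u)=e\,t$ with $e$ a unit, so the pullback of the boundary equation generates $\cI$.

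This is the delicate step. I must check that the bosonic pullback of the ideal of $D$, not just of $D_{\red}$, matches $\cI$ on $\widehat\cA$. I would argue chartwise: $u$ restricted to reduced charts is still a local equation of $D_{\red}\subset X$, since $D$ is an even Cartier divisor whose reduction is cut out by $\bar u$, and its image in $\cA_{[a]}$ is $\bar e\,t$. Inside $\widehat\cP\otimes\cA^\times$ the section $\beta^\times$ is locally $\beta\circ\varphi$ divided by $(et)^m$ times the canonical generator, so $\widehat q^\times(\beta^\times)=1$.

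The final step is the conclusion. Setting $b_i=\lambda_i-\beta^\times|_{U_i}\in\cG^\times(U_i)$ gives $\delta b=\Gamma$, exactly as in the proof of \cref{thm:Laurent}. So $\Gamma$ is a Laurent coboundary on $\mathcal U$, and multiplication by $\beta^\times$ splits the localised extension. This contradicts \cref{thm:Laurent} in the subsequent theorem, but the proposition itself only asserts the implication.
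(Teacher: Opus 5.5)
Your proposal is correct and follows the paper's proof essentially step for step: split on $U=X\setminus D_0$, extend the splitting section with a finite pole by \cref{lem:algebraic-finite-poles}, pull back along the $f_a$, complete, localise, transfer to $\widehat\cP$, and set $b_i=\lambda_i-\beta^\times|_{U_i}$. One correction: $f_a^*\cO_X(-D_0)$ is not $\cI/\cI^{a+1}$ but $\cI\otimes_{\widehat\cA}\cA_{[a]}=\cI/\cI^{a+2}$, an invertible $\cA_{[a]}$-module whose map to $\cA_{[a]}$ (multiplication by the nilpotent $e\,t$) has image $\cI/\cI^{a+1}$ and nonzero kernel; the identification with $\cI$ therefore holds only after completion, where $t$ is a non-zero-divisor, as the paper stresses, and your limit statement $\widehat\cP\otimes_{\widehat\cA}\cI^{-m}$ is unaffected.
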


\begin{proof}
We first turn the assumed vanishing into a section with a
finite pole bound on the ambient reduced stack.  Set
$$
 X=(\mathfrak X_g^\eps)_{\red},\qquad
 D_0=D_{\red},\qquad
 U=X\setminus D_0=\mathcal S_g^\eps.
$$
The stack $X$ is quasi-compact: it is an open substack
of the finite-type reduced stable moduli stack used in
Section~\ref{sec:boundary}.  The divisor $D_0$ is
effective Cartier by the local boundary coordinates in
\cref{prop:boundary-bundles}.  The restriction of
\eqref{eq:Green-extension} to $U$ represents
$\omega_2^{\rm alg}(\M_g^\eps)$.
Its vanishing means that this extension splits.  If
$\beta_U\in H^0(U,B|_U)$ is the image of $1$
under a splitting, then $p(\beta_U)=1$.

By \cref{lem:algebraic-finite-poles}, $\beta_U$
extends to a regular section
$\beta_m\in H^0(X,B(mD_0))$ for some finite
$m\geq0$.  Denote by $1_{mD_0}$ the canonical
image of $1$ under
$\cO_X\hookrightarrow\cO_X(mD_0)$.
Then
$$
 p(mD_0)(\beta_m)=1_{mD_0}.
$$
Indeed, both sides agree on $U$, and restriction of
$\cO_X(mD_0)$ to $U$ is injective by the
lemma.  The notation on the right records a section (not
a choice of frame).  On a chart where $D_0=(u=0)$,
the frame of $\cO_X(mD_0)$ is $u^{-m}$,
whereas its canonical section is $u^m u^{-m}$.
Writing $\beta_m=u^{-m}v$ in this frame, with
$v$ a regular section of $B$, the equality becomes
$p(v)=u^m$.

We can now pull back this regular twisted section at every
finite order.  Recall the ordinary maps
$$
 f_a:(C,\cA_{[a]})\longrightarrow X,
 \qquad \cA_{[a]}=\widehat\cA/\cI^{a+1},
$$
from Section~\ref{subsec:complete-coefficients}.
The sections $f_a^*\beta_m$ of
$f_a^*B(mD_0)$, and their images under the quotient
maps, commute with reduction from order $a+1$ to
order $a$.  For a vector bundle $E$ on $X$,
write
$$
 \widehat{f^*E}=\varprojlim_a f_a^*E.
$$
These complete modules are locally free: local frames of
the target bundle pull back compatibly on the formal charts.
Equivalently, one can lift frames successively as in
\cref{lem:complete-filtration}.

To identify the completed twist, consider first the line
bundle $\cO_X(-D_0)$ and its map to $\cO_X$.
On compatible charts, \cref{thm:formal-map} gives
$\widehat{f^*u}=e t$, with $e$ invertible and
$\cI=(t)$ in the complete bosonic source ring.
Consequently the completed map
$$
 \widehat{f^*\cO_X(-D_0)}\longrightarrow\widehat\cA
$$
is locally multiplication by $e t$.  Since $t$
is a non-zero-divisor in $\widehat\cA$, this map
identifies the completed pullback line bundle with $\cI$.
At finite order the pullback line bundle is still invertible,
but multiplication by the nilpotent image of $e t$ is
not injective.  Thus the identification with an ideal is
being made in the complete ring, not in a finite-order
truncation.

Taking dual tensor powers and using local frames therefore gives
$$
 \varprojlim_a f_a^*B(mD_0)
 \simeq\widehat{f^*B}\otimes_{\widehat\cA}\cI^{-m}.
$$
The compatible sections $f_a^*\beta_m$ define a section
of this module.  Its inclusion into the localisation gives
$$
 \widetilde\beta^\times\in
 H^0\!\left(C,\widehat{f^*B}
              \otimes_{\widehat\cA}\cA^\times\right).
$$
Let $\widehat p:\widehat{f^*B}\to\widehat\cA$
be the limit of the maps $f_a^*p$.  The image of this
section under its localisation is $1$.
This can also be checked directly in the local expression above:
$$
 \widetilde\beta^\times=(e t)^{-m}\widehat{f^*v},
 \qquad
 \widehat p^{\,\times}(\widetilde\beta^\times)
 =(e t)^{-m}\widehat{f^*(u^m)}=1.
$$
In particular, substitution by the formal map preserves a
finite pole bound.

This construction explains the order of operations.  We
extended a section of $B|_U$ to a regular section of
$B(mD_0)$, pulled that section back to all finite
orders, took the compatible limit, and then inverted the
Cartier ideal.  Localising $\cA_{[a]}$ itself at
$t$ would give the zero ring, since $t$ is
nilpotent there.  The regular twisted pullbacks are what
allow us to pass through those finite orders.

It remains to transfer the section to the comparison extension.
The compatible middle arrows in
\eqref{eq:comparison-pushout} give a map
$\widehat\psi:\widehat{f^*B}\to\widehat\cP$.
After localisation it fits into the commutative diagram
$$
\begin{tikzcd}[column sep=large,row sep=large]
 \widehat{f^*B}\otimes_{\widehat\cA}\cA^\times
 \arrow[r,"\widehat\psi^\times"]
 \arrow[d,"\widehat p^{\,\times}"']
 &\widehat\cP\otimes_{\widehat\cA}\cA^\times
 \arrow[d,"\widehat q^\times"]\\
 \cA^\times\arrow[r,equal]&\cA^\times.
\end{tikzcd}
$$
Hence $\beta^\times=\widehat\psi^\times
(\widetilde\beta^\times)$ satisfies
$\widehat q^\times(\beta^\times)=1$.
This section is global: $\beta_m$ descended on $X$,
and pullback, the compatible inverse limits, and the maps
in the pushout preserve its gluing identities.
Multiplication by $\beta^\times$ gives the required
$\cA^\times$-linear splitting.

Finally, the local sections $\lambda_i$ of
\cref{lem:compatible-splittings} also map to $1$.
Thus
$$
 b_i=\lambda_i-\beta^\times|_{U_i}
 \in\cG^\times(U_i).
$$
Their differences on overlaps are
$b_j-b_i=\lambda_j-\lambda_i=\Gamma_{ij}$, so
$\delta b=\Gamma$.  This is the asserted Laurent
primitive.
\end{proof}

\subsection{Nonvanishing in every genus and both parities}
\label{subsec:algebraic-conclusion}

The constructions in Section~\ref{sec:boundary} are available
for every $g\geq3$ and each parity.  We now combine Laurent
survival with \cref{prop:finite-poles} to deduce nonvanishing
on the entire smooth moduli component.

\Needspace{9\baselineskip}
\begin{theorem}[Algebraic nonvanishing in both parities]
\label{thm:algebraic}
For every integer $g\geq3$ and $\eps\in\{+,-\}$,
$$
 \omega_2^{\rm alg}(\M_g^\eps)\ne0
 \quad\hbox{in}\quad
 H^1\!\left(\mathcal S_g^\eps,
 \Hom(\bigwedge^2T_-\M_g^\eps,T_+\M_g^\eps)\right).
$$
In particular each such algebraic component is non-projected and
non-split.
\end{theorem}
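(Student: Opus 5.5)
The argument is by contradiction and simply assembles the two chains already in place. Fix $g\geq3$ and $\eps\in\{+,-\}$, and put $h=g-1\geq2$. By \cref{lem:ineffective-theta} there is a core $(C,L)$ satisfying \eqref{eq:core}. Since $\dim U_C=2h-2>0$ by \eqref{eq:dimensions}, we may fix a nonzero $\eta\in U_C$. Attach the elliptic tail with theta characteristic $M$ of parity $\eps$, as in the table preceding \cref{lem:tail-characters}. This yields the central source $Z_\eta^\eps$ of \eqref{eq:central-sources} and the clutching map $F_0:Z_\eta^\eps\to D\subset\mathfrak X_g^\eps$. Separating clutching adds the parities of the two components, so the target component is exactly the one with parity $\eps$. \cref{thm:formal-map} then supplies the transverse formal smoothing $\widehat F$, whose Laurent localisation maps into $\M_g^\eps=\mathfrak X_g^\eps\setminus D$.

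Now suppose $\omega_2^{\rm alg}(\M_g^\eps)=0$. \cref{prop:finite-poles} then produces a global section $\beta^\times$ that splits the localised comparison extension. Equivalently, the regular invariant cocycle $\Gamma$ of \cref{lem:compatible-splittings} becomes a Laurent coboundary on the finite cover $\mathcal U$. This directly contradicts \cref{thm:Laurent}, which states that $\Gamma$ is not a coboundary in $\check C^\bullet(\mathcal U,\cG^\times)$. That theorem rests on two inputs: the nonzero central symbol of \cref{prop:weight-zero}, and the vanishing of negative invariant sections in \cref{lem:no-negative-H0}. Hence $\omega_2^{\rm alg}(\M_g^\eps)\ne0$.

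The final assertions follow from the remark in Section~\ref{subsec:primary-conventions}: the primary class vanishes whenever $\mathfrak X$ is projected, so a nonzero class forces non-projectedness. A splitting in particular gives a projection, so non-split follows as well.

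The only real point to check is the bookkeeping of the target. First, the class used in \cref{prop:finite-poles} is the restriction to $U=\mathcal S_g^\eps$ of the extension \eqref{eq:Green-extension} representing $\omega_2^{\rm alg}(\mathfrak X_g^\eps)$. Since $\M_g^\eps$ is open in $\mathfrak X_g^\eps$, this restriction represents $\omega_2^{\rm alg}(\M_g^\eps)$. Second, the reduction of $\M_g^\eps$ is indeed the open complement $X\setminus D_0$. Both facts have already been recorded in the proof of \cref{prop:finite-poles}, so no new obstacle arises; the substantive work lies entirely in the earlier results.
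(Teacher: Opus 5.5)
Your proposal is correct and follows essentially the same route as the paper. Both arguments build the even core, the parity-$\eps$ tail, the central map $F_0$ and its formal smoothing, then argue by contradiction: vanishing of $\omega_2^{\rm alg}(\M_g^\eps)$ would, via \cref{prop:finite-poles}, give a Laurent splitting that \cref{thm:Laurent} excludes, and the non-projected and non-split conclusions are deduced exactly as in the paper.
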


\begin{proof}
Fix $g\geq3$ and $\eps\in\{+,-\}$, and put $h=g-1$.
Choose the ineffective even core $(C,L)$ of
\cref{lem:ineffective-theta} and a nonzero
$\eta\in U_C=H^1(C,L^{-1})$, whose dimension is
$2h-2>0$.  Section~\ref{sec:boundary} supplies the elliptic
tail of parity $\eps$ and the central map
$F_0:Z_\eta^\eps\to D\subset\mathfrak X_g^\eps$.
The core has even parity, so clutching and subsequent smoothing
have total parity $\eps$.

By \cref{prop:central-class}, this map has nonzero central
comparison class $c_\eta^\eps$.  This uses the compatibility
\eqref{eq:primary-naturality}, the source primary class of
\cref{lem:pointed-core-test}, and the injection of
\cref{lem:Jh-injection}.  Theorem~\ref{thm:formal-map}
extends $F_0$ formally, and \cref{prop:weight-zero}
identifies the central class of the completed comparison
extension with $c_\eta^\eps$.
Theorem~\ref{thm:Laurent} then excludes a splitting of that
extension after Laurent localisation.

If $\omega_2^{\rm alg}(\M_g^\eps)$ vanished,
\cref{prop:finite-poles} would produce precisely such a
Laurent splitting, a contradiction.  The construction applies
for every $h\geq2$ and both parities, proving the asserted
nonvanishing.  Projectedness forces the primary class to vanish,
and splitness implies projectedness, so both are excluded.
\end{proof}

This completes the algebraic argument.  Section~\ref{sec:gaga}
proves an injectivity theorem for analytification of first
cohomology, and Section~\ref{sec:compactification} verifies
its geometric hypotheses for spin-level covers and transfers
the conclusion to the spin stack.  Together they will show
that the primary classes just detected remain nonzero
holomorphically.

\section{Algebraic-to-analytic comparison in codimension two}
\label{sec:gaga}

Theorem~\ref{thm:algebraic} establishes nonvanishing of the
algebraic primary class.  To obtain the holomorphic conclusion,
we must show that this class cannot become zero after
analytification.  Since the smooth moduli space is not proper,
the usual GAGA theorem does not apply to it directly.  We prove
instead that, for any vector bundle on an open subset of a
normal projective variety whose complement has codimension at
least two, analytification is injective on first cohomology.
Section~\ref{sec:compactification} will construct such
compactifications for spin-level covers and use this injection
to complete the nonvanishing argument on the spin stack.

For the supergeometric result, this section and the
compactification part of Section~\ref{sec:compactification}
provide the auxiliary comparison machinery.  Their statements
also concern ordinary algebraic and complex geometry
independently of that application: the comparison applies to
arbitrary vector bundles and their extension classes, and the
compactifications in the next section concern moduli of
ordinary curves with spin and level structures. 

Throughout this section, $U$ is a dense open subset of a
normal complex algebraic variety $X$, and
$j:U\hookrightarrow X$ denotes the inclusion. We set
$Z=X\setminus U$, assume
$\operatorname{codim}_X Z\geq2$, and write
$j^{\an}:U^{\an}\hookrightarrow X^{\an}$ for the
analytified inclusion. For the comparison theorem, $X$
will also be projective, and hence a compactification of $U$.

The argument represents a class in $H^1(U,F)$, for an
algebraic vector bundle $F$, by an extension of
$\cO_U$ by $F$. If the class becomes zero after
analytification, this extension admits a holomorphic splitting.
Such a splitting is a section of its middle bundle mapping
to $1$ under the quotient morphism. We first show that this
section extends across $Z$ as a section of a coherent
reflexive sheaf on $X^{\an}$. When $X$ is projective,
GAGA then algebraizes the extended section. Compatibility
with the quotient morphism ensures that its restriction to
$U$ is an algebraic splitting, proving that the original
class was already zero.

\subsection{Reflexive extension across the complement}

We first establish the extension property needed for this
argument. An algebraic vector bundle $F$ on $U$ need not
extend as a vector bundle on $X$. The appropriate extension
is instead the coherent reflexive sheaf $E=j_*F$, which
may fail to be locally free along $Z$.

The following lemma shows that this extension also captures
holomorphic sections: for every analytic open subset
$W\subset X^{\an}$, sections of $F^{\an}$ on
$W\cap U^{\an}$ extend uniquely to sections of $E^{\an}$
on $W$. Taking $W=X^{\an}$ will provide the global
section to which we apply GAGA in the splitting argument.

\begin{lemma}[Reflexive Hartogs extension]\label{lem:reflexive-Hartogs}
Suppose that $X$ is normal and
$\operatorname{codim}_X Z\geq2$.  For a vector bundle $F$
on $U$, the sheaf $E=j_*F$ is coherent and reflexive, and
the canonical restriction morphism is an isomorphism
\begin{equation}\label{eq:reflexive-analytic-extension}
 E^{\an}\xrightarrow{\sim}j^{\an}_*F^{\an}.
\end{equation}
Equivalently, for every analytic open subset $W\subset X^{\an}$,
restriction induces an isomorphism
$$
 H^0(W,E^{\an})\xrightarrow{\sim}
 H^0(W\cap U^{\an},F^{\an}).
$$
\end{lemma}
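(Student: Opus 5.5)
The plan is to establish the algebraic statements (coherence and reflexivity of $E=j_*F$) and then the analytic isomorphism \eqref{eq:reflexive-analytic-extension}, reducing everything to the normality of $X$ and the codimension hypothesis.

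\emph{Algebraic part.} Coherence of $E=j_*F$ is local on $X$, so work on an affine open $\Spec R\subset X$. First extend $F$ to some coherent sheaf $F'$ on $X$; this is possible because a coherent sheaf on an open subset of a Noetherian scheme extends to a coherent sheaf. Replace $F'$ by its double dual $F'^{\vee\vee}$. This is reflexive, and it agrees with $F$ on $U$ because $F$ is locally free there.

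Next use the standard fact that a reflexive coherent sheaf on a normal variety satisfies Serre's condition $S_2$. Together with $\operatorname{codim}Z\geq2$, this gives $F'^{\vee\vee}\xrightarrow{\sim}j_*j^*F'^{\vee\vee}=j_*F$. In terms of local cohomology, the groups $\mathcal H^0_Z$ and $\mathcal H^1_Z$ of a module of depth $\geq2$ at points of $Z$ vanish. Hence $E$ is coherent and reflexive.

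\emph{Analytic part.} Analytification preserves reflexivity, and normality passes to $X^{\an}$. It also preserves codimension, since $Z^{\an}$ has the same dimension as $Z$. So $E^{\an}$ is a reflexive coherent analytic sheaf on a normal complex space, and it suffices to prove that restriction $E^{\an}(W)\to E^{\an}(W\setminus Z^{\an})$ is bijective for every open $W$.

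\emph{Key step: the analytic Hartogs property.} I would prove this locally by choosing a presentation that realises $E^{\an}$ as the kernel of a morphism of free modules,
$$
 0\longrightarrow E^{\an}\longrightarrow\cO^{a}\longrightarrow\cO^{b}.
$$
Such a presentation exists for reflexive sheaves: dualise a free presentation $\cO^{b}\to\cO^{a}\to E^{\vee,\an}\to0$ of the dual. For $\cO$ itself, sections extend across analytic subsets of codimension $\geq2$ on a normal space; this is Riemann's second extension theorem for normal complex spaces.

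Given a section of $E^{\an}$ off $Z^{\an}$, its image in $\cO^{a}$ extends. The extension still maps to zero in $\cO^{b}$, because it vanishes on the dense set $W\setminus Z^{\an}$ and sections of $\cO^{b}$ are determined there. It therefore lies in $E^{\an}$. Uniqueness of the extension follows from torsion-freeness of $E^{\an}$ and density of $W\setminus Z^{\an}$.

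This local statement gives \eqref{eq:reflexive-analytic-extension}, since $j^{\an}_*F^{\an}$ has sections $F^{\an}(W\cap U^{\an})=E^{\an}(W\setminus Z^{\an})$.

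\emph{Main obstacle.} The delicate point is the analytic $S_2$/Hartogs step on a possibly singular normal space. Rather than invoking depth arguments analytically, I rely on the kernel presentation to reduce to the extension theorem for holomorphic functions.
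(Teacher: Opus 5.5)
Your proposal is correct and follows essentially the same route as the paper. You identify $j_*F$ with the double dual of a coherent extension, where your $S_2$/local-cohomology formulation is the reflexive-sheaf extension fact the paper cites from Hartshorne. You then realise $E^{\an}$ locally as a kernel $0\to E^{\an}\to\cO^{a}\to\cO^{b}$ and conclude by Hartogs extension for holomorphic functions on the normal space $X^{\an}$ together with density.

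The one step you assert without justification is that analytification preserves reflexivity. The paper supplies this through flatness of the analytic local rings over the algebraic ones, which makes duals commute with analytification.
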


\begin{proof}
We first construct the algebraic extension and explain why it
is canonical.  Choose a coherent extension $G$ of $F$
to $X$, and form its double dual $G^{\vee\vee}$.
On a normal variety this hull is reflexive, and its restriction
to $U$ is $F^{\vee\vee}=F$.  A reflexive sheaf on
a normal variety is recovered from its restriction away from
a subset of codimension at least two
\cite[Proposition~1.6]{Hartshorne}; see also
\cite[Tag~0EBJ]{StacksProject} for the normal case used here.
Applying this fact to
the hull gives
$$
 G^{\vee\vee}\simeq j_*\bigl(G^{\vee\vee}|_U\bigr)
 \simeq j_*F.
$$
Thus $E=j_*F$ is coherent and reflexive, independently of
the initial choice of $G$.

We next verify that this same extension has the required
Hartogs property on analytic open subsets.  The analytic
local ring is flat over the algebraic local ring
\cite[Section~6, Proposition~3 and its corollary]{SerreGAGA}.
Duals of finitely presented modules commute with flat base
change \cite[Proposition~21]{SerreGAGA}, so the isomorphism
$E\simeq E^{\vee\vee}$ analytifies to
$E^{\an}\simeq(E^{\an})^{\vee\vee}$.  In particular,
$E^{\an}$ is reflexive.  The space $X^{\an}$ is
normal, and $Z^{\an}$ has codimension at least two.

To see explicitly how sections extend, work locally on
$X^{\an}$ and dualise a finite presentation of
$(E^{\an})^\vee$.  Reflexivity expresses $E^{\an}$
as the kernel of a map between finite free sheaves:
$$
 0\longrightarrow E^{\an}\longrightarrow
 \cO_{X^{\an}}^{\oplus a}
 \xrightarrow{\Phi}\cO_{X^{\an}}^{\oplus b}.
$$
A section of $E^{\an}$ away from $Z^{\an}$ is
therefore a tuple of holomorphic functions satisfying
$\Phi(s)=0$.  The Hartogs theorem on normal complex
spaces extends each function uniquely across $Z^{\an}$.
The relation $\Phi(s)=0$ continues to hold by density, so
the extended tuple is a section of $E^{\an}$.  Uniqueness
makes these local extensions agree on overlaps.  They hence
glue on every analytic open subset, proving
\eqref{eq:reflexive-analytic-extension} and its stated
interpretation in terms of sections.
\end{proof}

\subsection{Algebraisation of splittings}

Projectivity was not needed for \cref{lem:reflexive-Hartogs}.
We now use it to apply proper GAGA to the coherent reflexive
extension.  This compares global sections on $U$ with
holomorphic sections on $U^{\an}$, and thereby detects
whether a vector-bundle extension splits.

\begin{theorem}[Codimension-two GAGA injection]
\label{thm:GAGA}
Let $U$ be a dense open subset of a normal projective complex
variety $X$, with $\operatorname{codim}_X(X\setminus U)\geq2$.
For every algebraic vector bundle $F$ on $U$, the natural map
$$
 H^1(U,F)\longrightarrow H^1(U^{\an},F^{\an})
$$
is injective.
\end{theorem}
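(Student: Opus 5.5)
The plan is to follow the strategy sketched at the start of this section: represent a class by an extension, extend the holomorphic splitting section across $Z$ by \cref{lem:reflexive-Hartogs}, and algebraise it by proper GAGA. Let $\xi\in H^1(U,F)$ and let
$$
 0\longrightarrow F\longrightarrow G\xrightarrow{\;p\;}\cO_U\longrightarrow0
$$
be an extension of algebraic vector bundles with class $\xi$. Such an extension exists since $\Ext^1_U(\cO_U,F)=H^1(U,F)$, and $G$ is locally free because the quotient is. Analytification is exact, and the connecting map $H^0(U^{\an},\cO)\to H^1(U^{\an},F^{\an})$ sends $1$ to $\xi^{\an}$; this compatibility follows from the \v{C}ech description with an algebraic affine cover, which is also an analytic Stein cover. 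Hence, if $\xi^{\an}=0$, there is a holomorphic section $\sigma\in H^0(U^{\an},G^{\an})$ with $p^{\an}(\sigma)=1$. It then suffices to produce an algebraic section $\tau\in H^0(U,G)$ with $p(\tau)=1$.

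The extension step comes next. Put $E=j_*G$. By \cref{lem:reflexive-Hartogs}, applied to the vector bundle $G$, $E$ is coherent and reflexive, and $H^0(X^{\an},E^{\an})\simeq H^0(U^{\an},G^{\an})$. So $\sigma$ extends uniquely to $\widetilde\sigma\in H^0(X^{\an},E^{\an})$. Applying the same lemma to $\cO_U$ gives $j_*\cO_U=\cO_X$, using normality. The morphism $p$ extends to $j_*p:E\to\cO_X$, and $(j_*p)^{\an}$ agrees with $j^{\an}_*(p^{\an})$ under the identification \eqref{eq:reflexive-analytic-extension}. Therefore $(j_*p)^{\an}(\widetilde\sigma)$ is a holomorphic function on $X^{\an}$ restricting to $1$ on $U^{\an}$, so it equals $1$.

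Now apply GAGA. Since $X$ is projective and $E$ is coherent, Serre's GAGA gives $H^0(X,E)\xrightarrow{\sim}H^0(X^{\an},E^{\an})$. Thus $\widetilde\sigma=\tau_X^{\an}$ for a unique algebraic $\tau_X\in H^0(X,E)=H^0(U,G)$. Again by GAGA, applied to $\cO_X$ (or by injectivity of $H^0(X,\cO_X)\to H^0(X^{\an},\cO_{X^{\an}})$), the identity $(j_*p)(\tau_X)=1$ holds algebraically because it holds after analytification. Setting $\tau=\tau_X|_U$ gives $p(\tau)=1$, so the algebraic extension splits and $\xi=0$.

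I expect the main obstacle to be the bookkeeping of naturality rather than any deep step. One point is that the analytification of $j_*p$ is the map induced on $j^{\an}_*$; this follows from the naturality of the restriction isomorphism \eqref{eq:reflexive-analytic-extension} in the bundle. The other is that the analytic class of the analytified extension is $\xi^{\an}$. The substantive inputs, Hartogs extension for reflexive sheaves and proper GAGA in degree zero, are already supplied by \cref{lem:reflexive-Hartogs} and \cite{SerreGAGA}.
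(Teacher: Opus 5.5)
Your proposal is correct and follows essentially the same route as the paper: you represent the class by an extension, extend the holomorphic splitting section across $Z$ via \cref{lem:reflexive-Hartogs}, and algebraise it by degree-zero proper GAGA. The only cosmetic difference is that you verify $p(\tau)=1$ on $X$ via $j_*\cO_U=\cO_X$, whereas the paper checks it directly on $U$ using naturality in $p$ and injectivity of $H^0(U,\cO_U)\to H^0(U^{\an},\cO_{U^{\an}})$.
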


\begin{proof}
Let $\alpha\in H^1(U,F)$ have zero analytic image.
The identification
$H^1(U,F)=\Ext^1_{\cO_U}(\cO_U,F)$ represents it by an
algebraic extension
$$
 0\longrightarrow F\longrightarrow B
 \xrightarrow{p}\cO_U\longrightarrow0.
$$
Its quotient is locally free, so the sequence splits locally
and $B$ is a vector bundle.  Analytification preserves this exact sequence, and the
resulting extension represents the image
$\alpha^{\an}$ under the natural comparison map. 
The vanishing of
$\alpha^{\an}$ gives a holomorphic section
$s^{\an}\in H^0(U^{\an},B^{\an})$ satisfying
$p^{\an}(s^{\an})=1$.

Apply \cref{lem:reflexive-Hartogs} to $B$.  The sheaf
$\overline B=j_*B$ is coherent reflexive, and
$\overline B^{\an}\simeq j^{\an}_*B^{\an}$.
Proper GAGA \cite{SerreGAGA} gives the natural isomorphisms
$$
\begin{aligned}
 H^0(U,B)&=H^0(X,\overline B)\\
 &\xrightarrow{\sim}H^0(X^{\an},\overline B^{\an})
 =H^0(U^{\an},B^{\an}).
\end{aligned}
$$
Thus $s^{\an}$ is the analytification of an algebraic
section $s\in H^0(U,B)$.  The same comparison for
$\cO_U$, and naturality with respect to $p$, give
the commutative square
$$
\begin{tikzcd}[column sep=large]
 H^0(U,B)\arrow[r,"\sim"]\arrow[d,"p"']&
 H^0(U^{\an},B^{\an})\arrow[d,"p^{\an}"]\\
 H^0(U,\cO_U)\arrow[r,"\sim"]&
 H^0(U^{\an},\cO_{U^{\an}}).
\end{tikzcd}
$$
Since the bottom arrow sends $1$ to $1$ and is
injective, $p(s)=1$.  The map $a\mapsto as$ therefore
splits the algebraic extension, so $\alpha=0$.
\end{proof}

For the application to obstruction classes, it is useful to
express the theorem directly in terms of extensions.  The
following consequence applies to any pair of vector bundles
on $U$, irrespective of their geometric origin.

\begin{corollary}[Algebraic splittings detected analytically]
\label{cor:analytic-splitting}
Under the hypotheses of \cref{thm:GAGA}, an algebraic extension
of vector bundles
$$
 0\longrightarrow F_1\longrightarrow B\longrightarrow F_2
 \longrightarrow0
$$
on $U$ splits algebraically if it splits holomorphically.
\end{corollary}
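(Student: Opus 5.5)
The plan is to reduce to \cref{thm:GAGA} by identifying extensions of vector bundles with first cohomology of a Hom bundle. Given the algebraic extension
$$
 0\longrightarrow F_1\longrightarrow B\longrightarrow F_2\longrightarrow0
$$
on $U$, its class lies in $\Ext^1_U(F_2,F_1)$. Since $F_2$ is locally free, the local-to-global spectral sequence degenerates, because ${\mathcal Ext}^q(F_2,F_1)=0$ for $q>0$. This gives
$$
 \Ext^1_U(F_2,F_1)\simeq H^1\bigl(U,\Hom(F_2,F_1)\bigr),
$$
and the same holds on $U^{\an}$ with the analytified bundles.

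Concretely, I will use the following cocycle description. Choose a Zariski affine cover on which $B\to F_2$ admits sections $s_i$. The differences $s_j-s_i$ form a \v{C}ech cocycle with values in $\Hom(F_2,F_1)$, and the extension splits exactly when this cocycle is a coboundary. Analytifying the cover and the sections gives the corresponding holomorphic cocycle. The cohomology maps commute with these identifications, so the class of $B^{\an}$ is the image of the class of $B$ under
$$
 H^1\bigl(U,\Hom(F_2,F_1)\bigr)\longrightarrow H^1\bigl(U^{\an},\Hom(F_2,F_1)^{\an}\bigr),
$$
using $\Hom(F_2,F_1)^{\an}=\Hom(F_2^{\an},F_1^{\an})$ for locally free sheaves.

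A holomorphic splitting makes the analytic class zero. Applying \cref{thm:GAGA} to the algebraic vector bundle $F=\Hom(F_2,F_1)$ then shows that the algebraic class is zero, so the extension splits algebraically.

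There is a second route that avoids the compatibility check. Pull the extension back along $\cO_U\to\Hom(F_2,F_2)$ after tensoring with $F_2^\vee$: the sequence $0\to F_1\otimes F_2^\vee\to B\otimes F_2^\vee\to F_2\otimes F_2^\vee\to0$, pulled back along $\Id$, gives an extension of $\cO_U$ by $\Hom(F_2,F_1)$. This reduces directly to the setting of the proof of \cref{thm:GAGA}.

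The only step I expect to need care is the compatibility between analytification of extension classes and the comparison map on $H^1$. It should be routine once the classes are computed via the same \v{C}ech cover, since Zariski \v{C}ech cohomology of the coherent sheaf on a separated $U$ computes $H^1$, and analytic \v{C}ech $H^1$ injects into sheaf $H^1$.
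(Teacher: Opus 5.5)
Your proposal is correct and matches the paper's proof: identify the extension class with an element of $H^1(U,\Hom(F_2,F_1))$, observe that a holomorphic splitting kills its analytic image, and apply \cref{thm:GAGA} to the bundle $\Hom(F_2,F_1)$. Your \v{C}ech compatibility check and your alternative reduction, pulling back $\Hom(F_2,B)\to\Hom(F_2,F_2)$ along $1\mapsto\Id$, spell out details that the paper leaves implicit.
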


\begin{proof}
The extension class lies in $H^1(U,\Hom(F_2,F_1))$.
A holomorphic splitting makes its analytic image zero, so
\cref{thm:GAGA}, applied to $\Hom(F_2,F_1)$, makes the
algebraic class zero as well.
\end{proof}

The comparison theorem is now available for any normal
projective compactification with a codimension-two
complement.  Section~\ref{sec:compactification} verifies
this geometric condition for spin-level covers and
transfers the resulting injectivity to the spin stack,
where it will apply to the primary obstruction.

\section{Spin-level compactifications and holomorphic nonvanishing}
\label{sec:compactification}

The algebraic primary class is nonzero by
Theorem~\ref{thm:algebraic}, and Theorem~\ref{thm:GAGA}
shows how to preserve a first cohomology class under
analytification when a suitable compactification is available.
We now verify that geometric hypothesis for spin moduli.
We first construct normal projective compactifications of
its level covers with complement of codimension at least
two.  We then descend the resulting comparison through
the finite level cover and the central spin gerbe.
Applying it to the primary obstruction will prove that
both components of smooth unpunctured supermoduli are
non-projected, algebraically and holomorphically, in every
genus at least three.

The compactification argument takes place in ordinary
algebraic geometry.  We begin with coarse moduli spaces
and return to stacks only when retaining the spin data
and descending cohomology.  Write $\mathcal M_g$ for
the stack of smooth unpunctured curves, $M_g$ for its
coarse moduli space, and $\overline M_g$ for the
Deligne--Mumford compactification by stable curves
\cite{DeligneMumford}.  We write $A_g$ for the coarse
moduli space of principally polarised abelian varieties
of dimension $g$.  Throughout this section, $g\geq3$.

\subsection{The Torelli morphism and its Satake boundary}

The usual stable-curve compactification adds divisors,
whereas Theorem~\ref{thm:GAGA} requires a complement of
codimension at least two.  We obtain the smaller boundary
by passing to the Satake image of the Torelli morphism:
there the attaching points of a stable curve are forgotten.
Lemma~\ref{lem:coarse-Torelli} identifies the open locus
corresponding to smooth curves and proves finiteness over
that locus.  Lemma~\ref{lem:satake-boundary} then estimates
the dimension of the omitted locus.  These two facts will
allow the next subsection to recover the spin-level spaces
by finite normalisation.

For a smooth curve $C$, let $J(C)$ denote its
Jacobian with its canonical principal polarisation.  The
Torelli morphism is
$$
 \tau:M_g\longrightarrow A_g,\qquad [C]\longmapsto[J(C)].
$$
A principally polarised abelian variety is
\emph{indecomposable} if it is not a product of two
positive-dimensional principally polarised abelian
varieties with the product polarisation.  Denote this
open locus by $A_g^{\rm ind}$.  Smooth Jacobians are
indecomposable, and we write $J_g\subset A_g^{\rm ind}$
for the reduced image of $\tau$.  When lower genera
occur below, $J_h$ has the same meaning for $h\geq2$,
and $J_1=A_1$ is the moduli space of elliptic curves.

The Satake--Baily--Borel compactification is a projective
variety with a stratification
$$
 A_g^{\rm Sat}=\coprod_{h=0}^g A_h,
$$
where $A_0$ is a point and $A_g$ is the open
stratum \cite{BailyBorel}.  Thus $A_g^{\rm ind}$ is
also open in $A_g^{\rm Sat}$.  We write
$\overline J_g^{\rm Sat}$ for the reduced closure
of $J_g$ in this compactification.

The Torelli morphism extends to a proper morphism
$$
 \overline\tau:\overline M_g\longrightarrow A_g^{\rm Sat}.
$$
It can be obtained by composing the extension to the
second Voronoi compactification with its morphism to
Satake; see \cite{AlexeevBrunyate} for this construction
and its context.  Its effect on a stable curve is
particularly simple.  Let $C_v$ be the connected
components of the normalisation of a stable curve $C$,
and let $\Gamma_C$ be its dual graph, with one vertex
for each component and one edge for each node.  The genus
formula is
$g=\sum_v g(C_v)+b_1(\Gamma_C)$, where
$b_1(\Gamma_C)$ is the first Betti number of the graph.
The abelian part of the generalised Jacobian is the
product of the Jacobians of the components, while its
torus has rank $b_1(\Gamma_C)$.  Satake retains only
the abelian part, so
\begin{equation}\label{eq:stable-Satake-image}
 \overline\tau(C)=\prod_{g(C_v)>0}J(C_v)
 \ \in A_{g-b_1(\Gamma_C)}\subset A_g^{\rm Sat},
\end{equation}
with the product principal polarisation.  In particular,
the attaching points do not enter this image.

\begin{lemma}[The smooth Jacobian locus]
\label{lem:coarse-Torelli}
One has $\overline\tau^{-1}(A_g^{\rm ind})=M_g$.
The Torelli morphism $\tau:M_g\to J_g$ is finite and
surjective.  Moreover, $J_g$ is closed in $A_g^{\rm ind}$
and open in its reduced Satake closure $\overline J_g^{\rm Sat}$.
\end{lemma}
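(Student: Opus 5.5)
The plan has three steps: compute the preimage of the indecomposable locus using \eqref{eq:stable-Satake-image}, deduce finiteness of $\tau$ from properness of $\overline\tau$, and then read off the topological statements about $J_g$.

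\emph{Preimage of the indecomposable locus.} Let $C$ be a stable curve of genus $g$. If $b_1(\Gamma_C)>0$, then \eqref{eq:stable-Satake-image} puts $\overline\tau(C)$ in the boundary stratum $A_{g-b_1(\Gamma_C)}$. That stratum is disjoint from $A_g\supset A_g^{\rm ind}$. If $b_1(\Gamma_C)=0$, then $\Gamma_C$ is a tree and $g=\sum_v g(C_v)$, so the image is the product $\prod_{g(C_v)>0}J(C_v)$ in $A_g$. When $C$ is singular, stability forces at least two components of positive genus. Indeed, a leaf of the tree has only one node, so it cannot be rational, and a tree with at least one edge has at least two leaves. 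The product is then decomposable. Hence only smooth curves map into $A_g^{\rm ind}$. Conversely, smooth Jacobians are indecomposable, since the theta divisor of a smooth curve is irreducible. Together these give $\overline\tau^{-1}(A_g^{\rm ind})=M_g$.

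\emph{Finiteness of $\tau$.} Restricting the proper morphism $\overline\tau$ over the open set $A_g^{\rm ind}$ gives a proper morphism $M_g=\overline\tau^{-1}(A_g^{\rm ind})\to A_g^{\rm ind}$. By the Torelli theorem, this morphism is injective on closed points. A proper morphism with finite fibres is finite. Therefore $\tau$ is finite, its image $J_g$ is closed in $A_g^{\rm ind}$, and $\tau:M_g\to J_g$ is finite and surjective onto the reduced image. Quasi-finiteness is all that is needed here; the failure of the coarse map to be an immersion along the hyperelliptic locus is irrelevant.

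\emph{Openness of $J_g$ in its closure.} We have $J_g\subset\overline J_g^{\rm Sat}\cap A_g^{\rm ind}$. The set $J_g$ is closed in the open subset $A_g^{\rm ind}$ of $A_g^{\rm Sat}$. Hence the closure of $J_g$ inside $A_g^{\rm ind}$ is $J_g$ itself, which gives $\overline J_g^{\rm Sat}\cap A_g^{\rm ind}=J_g$. This exhibits $J_g$ as open in $\overline J_g^{\rm Sat}$.

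The main obstacle is the first step, which requires care with the combinatorics of stable curves. One has to ensure that the Satake image of a curve of compact type, with at least one node, really is a nontrivially decomposable product. This is where the stability argument on the leaves of the tree is used. One must also confirm that the polarisation is the product polarisation, which is part of \eqref{eq:stable-Satake-image}. The remaining steps are formal consequences of properness and the Torelli theorem.
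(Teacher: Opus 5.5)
Your proof is correct and follows the paper's argument essentially step for step. The Satake-image formula forces $b_1(\Gamma_C)=0$; the leaf/stability argument then rules out singular compact-type curves, which you phrase as ``at least two positive-genus leaves, hence decomposable'' and the paper as ``exactly one positive-genus component, hence a rational leaf''; finiteness follows from properness of the base change $M_g\to A_g^{\rm ind}$ together with Torelli quasi-finiteness, and openness of $J_g$ is read off from its closedness in $A_g^{\rm ind}$.
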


\begin{proof}
The principally polarised Jacobian of a smooth curve is
indecomposable.  Conversely, suppose that a stable curve
has image in $A_g^{\rm ind}$.  Its abelian part must
have dimension $g$, so
\eqref{eq:stable-Satake-image} forces
$b_1(\Gamma_C)=0$.  Thus $\Gamma_C$ is a tree
and the curve is of compact type.  Indecomposability of
the product then forces exactly one component to have
positive genus.

There can be no rational components.  Indeed, a tree
with at least two vertices has at least two leaves.
If only one vertex has positive genus, at least one
leaf is rational.  That component would have only one
attaching point and no markings, contradicting stability.
The graph therefore has a single vertex and no edges,
so the curve is smooth.

This identifies the inverse image of the indecomposable
open locus and gives the Cartesian square
$$
\begin{tikzcd}[column sep=large]
 M_g\arrow[r,hook]\arrow[d,"\tau"']&
 \overline M_g\arrow[d,"\overline\tau"]\\
 A_g^{\rm ind}\arrow[r,hook]&A_g^{\rm Sat}.
\end{tikzcd}
$$
Consequently $\tau:M_g\to A_g^{\rm ind}$ is proper,
being a base change of $\overline\tau$.  The classical
Torelli theorem makes it injective on geometric points
and hence quasi-finite.  A proper quasi-finite morphism
is finite.  Its reduced image $J_g$ is therefore
closed in $A_g^{\rm ind}$, and the induced morphism
to that image is finite and surjective.  Finally,
closedness in this open subset gives
$$
 \overline J_g^{\rm Sat}\cap A_g^{\rm ind}=J_g,
$$
which proves that $J_g$ is open in its Satake closure.
\end{proof}

\begin{lemma}[Codimension of the Satake--Jacobian boundary]
\label{lem:satake-boundary}
The closed subset
$B_g=\overline J_g^{\rm Sat}\setminus J_g$ satisfies
$$
 \dim\overline J_g^{\rm Sat}=3g-3,
 \qquad \dim B_g\leq3g-5.
$$
\end{lemma}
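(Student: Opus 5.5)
The dimension of $\overline J_g^{\rm Sat}$ is that of $J_g$, which is dense and open in it by \cref{lem:coarse-Torelli}. Since $\tau:M_g\to J_g$ is finite and surjective, $\dim J_g=\dim M_g=3g-3$. For the boundary, I will use properness and surjectivity of $\overline\tau$ onto the closure. The image $\overline\tau(\overline M_g)$ is closed, irreducible and contains $J_g$ as a dense subset, so it equals $\overline J_g^{\rm Sat}$. By \cref{lem:coarse-Torelli}, $B_g$ is therefore the image under $\overline\tau$ of the boundary $\overline M_g\setminus M_g$. I will estimate the dimension of this image stratum by stratum, using formula \eqref{eq:stable-Satake-image}.

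Stratify $\overline M_g\setminus M_g$ by dual graph type. A stable curve $C$ maps to the point $\prod_{g(C_v)>0}J(C_v)$ of $A_{g-b_1}$. This point depends only on the isomorphism classes of the positive-genus normalisation components, forgetting attaching points and rational components. The image of a stratum is thus contained in the image of a product $\prod_v J_{g_v}$, where $J_{g_v}$ is the Jacobian locus in genus $g_v\geq 1$ (with $J_1=A_1$) and $\sum_v g_v=g-b_1$. Its dimension is at most
$$
 \sum_{g_v\geq2}(3g_v-3)+\#\{v:g_v=1\}.
$$
I will maximise this quantity over all partitions $\sum g_v=g-b$ with $b\geq0$, excluding the case of a single component with $b=0$, which is smooth. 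There are two cases.

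If $b\geq1$ and there is one component of genus $g-b$, the bound is $3(g-b)-3\leq 3g-6$. If $b=g-1$ and the component is elliptic, the bound is $1$, which is at most $3g-5$ since $g\geq3$. If $b=g$, the image is a point. If there are at least two positive-genus components, each term satisfies $\dim\leq 3g_v-2$, with equality only for $g_v=1$. The total is then at most $3(g-b)-2k$, where $k\geq2$ is the number of components, so it is at most $3g-4$. This is too weak, so I must refine: for $g_v\geq2$ the contribution is $3g_v-3$. For $k$ components with $e$ elliptic ones, the sum is $3(g-b)-3k+2e$. Since $e\leq k$, this is at most $3g-k\leq 3g-5$ unless $k=e$ is small. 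If every component is elliptic, the sum is $k=g-b\leq g\leq 3g-5$, using $g\geq3$. Otherwise $e\leq k-1$, giving at most $3g-k-2\leq 3g-4$ for $k=2$.

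The main obstacle is exactly this case: two components, one elliptic and one of genus $g-1$. Here the bound gives $3(g-1)-3+1=3g-5$. I need to check the arithmetic carefully: $3g-3k+2e$ with $k=2,e=1$ gives $3g-4$? Recompute directly: $(3(g-1)-3)+1=3g-5$. The general formula needs correcting, since the elliptic contribution is $1=3\cdot1-2$. The correct total is $\sum(3g_v-3)+e=3(g-b)-3k+e$, which for $k\geq2$ is at most $3g-6+e-3(k-2)\leq 3g-5$ when $e\leq1$ or $k\geq3$. The remaining subcase $k=2$, $e=2$ forces $g=2$, which is excluded. Since there are finitely many strata, $\dim B_g\leq 3g-5$.
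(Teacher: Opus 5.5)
Your argument is correct, but it is organised differently from the paper's. The paper looks only at the boundary divisors. It computes the Satake image of a general point of $\Delta_0$, $\Delta_1$ and $\Delta_i$ ($i\geq2$), namely $J_{g-1}$, $J_1\times J_{g-1}$ and $J_i\times J_{g-i}$, of dimensions $3g-6$, $3g-5$ and $3g-6$. It then uses completeness and irreducibility of each $\Delta_i$: the image of a divisor is the closure of the image of its general stratum, and every deeper stratum lies in some divisor.

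You instead bound the image of every dual-graph stratum directly by $\sum_{g_v\geq2}(3g_v-3)+e=3(g-b)-3k+e$ and maximise over $(b,k,e)$. Your route avoids irreducibility of the boundary divisors and any closure argument. It uses only the finiteness of the set of stable graphs and the fact that the image of a variety has dimension at most that of the source. It also shows that only strata with $b=0$ and positive-genus components of genera $1$ and $g-1$ can reach $3g-5$. The paper's route is shorter, since it examines only $\lfloor g/2\rfloor+1$ general strata.

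Points to fix in the write-up:
\begin{itemize}
\item Drop the intermediate formula with $2e$ and state the corrected one, $3(g-b)-3k+e$, from the start.
\item Your closing remark is inaccurate: $k=e=2$ forces $g-b=2$, not $g=2$. The subcase is harmless, because your all-elliptic bound $k=g-b\leq g\leq 3g-5$ already covers it (the value there is $2$).
\item The exclusion of $b=0$, $k=1$ from the boundary should cite the stability argument in the proof of \cref{lem:coarse-Torelli}: a rational leaf would have only one special point.
\end{itemize}
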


\begin{proof}
Properness of $\overline\tau$ and density of $M_g$
in $\overline M_g$ give
$\overline\tau(\overline M_g)=\overline J_g^{\rm Sat}$.
Lemma~\ref{lem:coarse-Torelli} identifies $B_g$ with
the image of the stable boundary.  The same lemma gives
$\dim J_g=\dim M_g=3g-3$, proving the first assertion.

To estimate the boundary image, write $\Delta_0$ for
the non-separating divisor and $\Delta_i$,
$1\leq i\leq\lfloor g/2\rfloor$, for the separating
divisors.  A general curve in $\Delta_0$ is obtained
by identifying two points on a smooth curve of genus
$g-1$.  A general curve in $\Delta_i$ joins
smooth curves of genera $i$ and $g-i$ at one
point on each.  Formula~\eqref{eq:stable-Satake-image}
therefore gives the following images, up to finite
identification of equal factors:
$$
\begin{array}{c|c|c}
 \text{divisor}&\text{general Satake image}&\text{dimension}\\\hline
 \Delta_0&J_{g-1}&3g-6\\
 \Delta_1&J_1\times J_{g-1}&3g-5\\
 \Delta_i,\quad 2\leq i\leq\lfloor g/2\rfloor
     &J_i\times J_{g-i}&3g-6.
\end{array}
$$
The table follows from $\dim J_1=1$ and $\dim J_h=3h-3$ for $h\geq2$, 
since the Satake image depends only on the Jacobians of the normalisation components.
Interchanging equal factors is a finite quotient and
does not change the dimension.

Each boundary divisor is complete, so its image is
the closure of the image of its dense general stratum.
It consequently has the dimension listed in the table.
Every deeper boundary stratum lies in a boundary
divisor.  Taking their union gives
$\dim B_g\leq3g-5$, as required.
\end{proof}

This estimate explains the lower bound $g\geq3$
in the compactification argument.  In genus two the
separating product locus $J_1\times J_1$ has dimension
two inside the three-dimensional Jacobian closure;
its codimension is one, so it does not satisfy the
hypothesis of Theorem~\ref{thm:GAGA}.

\subsection{Spin-level covers and normalisation}

The preceding lemmas provide a projective variety
$\overline J_g^{\rm Sat}$ containing the smooth Jacobian
locus $J_g$, with complement of codimension at least two.
To apply the comparison theorem to spin moduli, we need
compactifications with the same boundary property for
schemes carrying spin and level structures.

We first remove the scalar automorphisms of the spin line
and then add a full level structure to eliminate the remaining
automorphisms. This produces the smooth schemes defined below.
Their finite morphisms to $J_g$ allow us to normalise
$\overline J_g^{\rm Sat}$ in the function field of each
connected component. The essential point of
Proposition~\ref{prop:small-compactification} is that this
normalisation recovers the entire component over $J_g$;
its complement therefore lies over the small boundary
estimated in \cref{lem:satake-boundary}.

Recall from Section~\ref{sec:transverse} that
$\mathcal S_g^\eps=(\M_g^\eps)_{\red}$ is the
spin stack of parity $\eps\in\{+,-\}$.  An object
$(C,L,L^{\otimes2}\simeq K_C)$ has a canonical
central automorphism
$$
 \kappa=(\Id_C,-\Id_L).
$$
Together with the identity, it forms the scalar subgroup
$\boldsymbol\mu_2$.  Rigidifying means removing this
central subgroup from the stabilisers.  Write
$$
 r:\mathcal S_g^\eps\longrightarrow\mathcal R_g^\eps
$$
for the resulting $\boldsymbol\mu_2$-gerbe; the
rigidification construction is recalled in \cite[Appendix~A]{AOV}.
The map $\mathcal R_g^\eps\to\mathcal M_g$
forgets the theta characteristic and is representable,
finite and \'etale.  Indeed, \'etale-locally on the base
of a smooth family, choose one theta characteristic.
All other theta characteristics form a torsor under
the two-torsion of the relative Picard scheme.
Multiplication by two is finite \'etale in characteristic
zero, and fixing parity selects an open and closed
part.  Rigidification removes the scalar automorphisms
of each square root, leaving precisely this finite
\'etale space of choices.

Fix an integer $n\geq3$ and a primitive $n$-th
root of unity.  A full symplectic level structure on
$C$ is a symplectic identification
$$
 \alpha:(\mathbb Z/n\mathbb Z)^{2g}
 \xrightarrow{\sim}J(C)[n],
$$
where the Weil pairing is identified with the standard
pairing using the chosen root of unity.  The moduli
space $M_g[n]$ of curves with this structure is a
smooth quasi-projective scheme.  Its morphism to
$\mathcal M_g$ is finite \'etale and surjective:
the level structures form a finite \'etale set of
choices, and $n\geq3$ eliminates automorphisms
preserving a level structure
\cite[Section~2.3.3]{PikaartDeJong}.
Define the scheme of rigidified spin curves with level by
\begin{equation}\label{eq:uniform-level}
 U_{g,n}^\eps=\mathcal R_g^\eps\times_{\mathcal M_g}M_g[n].
\end{equation}
The projection $q$ to $\mathcal R_g^\eps$ occurs
in the Cartesian square
$$
\begin{tikzcd}[column sep=large]
 U_{g,n}^\eps\arrow[r]\arrow[d,"q"']&M_g[n]\arrow[d]\\
 \mathcal R_g^\eps\arrow[r]&\mathcal M_g.
\end{tikzcd}
$$
All four morphisms are finite \'etale, and $q$ is
surjective.  In particular, $U_{g,n}^\eps$ is a
smooth quasi-projective scheme of dimension $3g-3$.

We will also need the coarse forgetful morphism
$M_g[n]\to M_g$.  Changing the symplectic marking
gives an action of
$\operatorname{Sp}_{2g}(\mathbb Z/n\mathbb Z)$;
the quotient stack is $\mathcal M_g$, and its coarse
quotient is $M_g$.  Hence the coarse forgetful map
is finite, although it need not be \'etale.

Combining the finite morphisms just constructed with
\cref{lem:coarse-Torelli}, we obtain a finite composite
$$
 U_{g,n}^\eps\longrightarrow M_g[n]\longrightarrow M_g
 \xrightarrow{\tau}J_g.
$$
We will compactify its source, one connected component at
a time. Write
$$
 U_{g,n}^\eps=\coprod_i U_{g,n,i}^\eps
$$
for its finite decomposition into connected components.
Since a smooth scheme is regular, its irreducible components
are disjoint and open. Each connected component
$U_{g,n,i}^\eps$ is therefore integral and normal.
The next proposition constructs its compactification by
normalising $\overline J_g^{\rm Sat}$ in
$\C(U_{g,n,i}^\eps)$, and identifies the component with
the full inverse image of $J_g$.

\begin{proposition}[Small spin-level compactification]
\label{prop:small-compactification}
Fix $g\geq3$, $n\geq3$, and
$\eps\in\{+,-\}$.  Every connected component
$U_{g,n,i}^\eps$ of the scheme
\eqref{eq:uniform-level} admits an open immersion
into a normal projective variety $X_{g,n,i}^\eps$
such that
$$
 \operatorname{codim}_{X_{g,n,i}^\eps}
 (X_{g,n,i}^\eps\setminus U_{g,n,i}^\eps)\geq2.
$$
\end{proposition}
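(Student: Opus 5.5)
Set $U=U_{g,n,i}^\eps$, an integral normal quasi-projective scheme of dimension $3g-3$, and let $\phi:U\to J_g$ be the finite composite $U\to M_g[n]\to M_g\xrightarrow{\tau}J_g$ built above. We take $X=X_{g,n,i}^\eps$ to be the normalisation of the projective variety $\overline J_g^{\rm Sat}$ in the function field $\C(U)$. The field extension $\C(U)/\C(J_g)$ is finite because $\phi$ is finite and dominant onto the closed subvariety $J_g$ of $A_g^{\rm ind}$; recall that $J_g$ is irreducible as the image of $M_g$. The first step is standard: normalisation of a projective variety in a finite extension of its function field is finite over it, so $X$ is a normal projective variety equipped with a finite surjective morphism $\nu:X\to\overline J_g^{\rm Sat}$.

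The second step, which we regard as the main point, is to identify $\nu^{-1}(J_g)$ with $U$. By \cref{lem:coarse-Torelli}, $J_g$ is open in $\overline J_g^{\rm Sat}$. Normalisation commutes with restriction to opens, so $\nu^{-1}(J_g)$ is the normalisation of $J_g$ in $\C(U)$. Now $U$ is normal, finite over $J_g$, and has function field $\C(U)$. By the universal property of normalisation (equivalently, uniqueness of integral closure), $U$ is the normalisation of $J_g$ in $\C(U)$. Concretely, over an affine open $\Spec R\subset J_g$, the preimage in $U$ is $\Spec S$ with $S$ finite over $R$ and integrally closed in $\C(U)$, so $S$ is exactly the integral closure of $R$ in $\C(U)$. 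These local identifications glue to an open immersion $U\hookrightarrow X$ whose image is $\nu^{-1}(J_g)$.

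Care is needed on two points here. First, finiteness of $\phi$ relies on \cref{lem:coarse-Torelli} together with finiteness of the coarse forgetful map $M_g[n]\to M_g$ and of $U_{g,n}^\eps\to M_g[n]$. Second, we must check that $\phi$ is finite from $U$ to the reduced scheme $J_g$ itself, not merely to $A_g^{\rm ind}$. This holds because $U$ is reduced, so $\phi$ factors through the reduced image.

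The third step is the codimension estimate. The complement of $U$ in $X$ is $\nu^{-1}(B_g)$. Since $\nu$ is finite, $\dim\nu^{-1}(B_g)\le\dim B_g\le 3g-5$ by \cref{lem:satake-boundary}. Because $X$ is irreducible of dimension $3g-3$, this complement has codimension at least two, which proves the proposition. We expect no serious obstacle beyond the normalisation identification in the second step; the hypothesis $g\ge3$ enters only through \cref{lem:satake-boundary}.
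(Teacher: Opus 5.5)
Your proposal is correct and takes essentially the same route as the paper: normalise $\overline J_g^{\rm Sat}$ in $\C(U_{g,n,i}^\eps)$, identify $\nu^{-1}(J_g)$ with the component by integral closure over affine opens of $J_g$, and bound the complement $\nu^{-1}(B_g)$ using \cref{lem:satake-boundary}. The one point you assert rather than prove is that $U\to J_g$ is dominant; the paper gets this from a dimension count: the finite image of the $(3g-3)$-dimensional $U$ in the irreducible $M_g$ is closed of the same dimension, hence is all of $M_g$.
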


\begin{proof}
Fix a connected component $U_i=U_{g,n,i}^\eps$.
The composite
$$
 U_i\longrightarrow M_g[n]\longrightarrow M_g
 \xrightarrow{\tau}J_g
$$
is finite: the first two maps are finite by the constructions
above, and the last is finite by \cref{lem:coarse-Torelli}.
It is also surjective.  Indeed, $U_i$ has dimension
$3g-3$, so its nonempty closed image under the finite map
to $M_g$ has that same dimension.  Irreducibility of
$M_g$ forces this image to be all of $M_g$, and
$\tau$ is surjective onto $J_g$.

The reduced varieties $J_g$ and $\overline J_g^{\rm Sat}$
are integral, being the image and closure of the irreducible
variety $M_g$.  The finite surjective map just obtained
therefore defines a finite function-field extension
$$
 \C(J_g)\subset K_i:=\C(U_i).
$$
Since $J_g$ is dense and open in
$\overline J_g^{\rm Sat}$, these two varieties have the
same function field.
Set
$$
 X_i=\operatorname{Norm}_{K_i}(\overline J_g^{\rm Sat}),
 \qquad \nu:X_i\longrightarrow\overline J_g^{\rm Sat}.
$$
Here normalisation means taking the integral closure of each
affine coordinate ring in $K_i$.  For varieties over $\C$,
normalisation in a finite function-field extension is finite;
see \cite[Tag~0BAK]{StacksProject}.  Thus $\nu$ is finite
and surjective, and $X_i$ is normal and projective.

We identify $\nu^{-1}(J_g)$ with $U_i$.
For a nonempty affine open $\Spec A\subset J_g$, its
inverse image in $U_i$ is affine, say $\Spec B$.
The ring $B$ is a normal domain, finite over $A$,
with fraction field $K_i$.  It is exactly the integral
closure of $A$ in $K_i$: every element of $B$
is integral over $A$, and an element of $K_i$ integral
over $A$ is also integral over $B$, hence belongs
to $B$.  Integral closure commutes with localisation,
so these identifications glue and give the Cartesian square
$$
\begin{tikzcd}[column sep=large]
 U_i\arrow[r,hook]\arrow[d]&X_i\arrow[d,"\nu"]\\
 J_g\arrow[r,hook]&\overline J_g^{\rm Sat}.
\end{tikzcd}
$$
In particular, the complement is exactly
$X_i\setminus U_i=\nu^{-1}(B_g)$, where
$B_g=\overline J_g^{\rm Sat}\setminus J_g$.

Since $\nu$ is finite and surjective, the dimensions of
$X_i$ and $\nu^{-1}(B_g)$ equal those of
$\overline J_g^{\rm Sat}$ and $B_g$, respectively.
Lemma~\ref{lem:satake-boundary} gives
$$
 \dim X_i=3g-3,\qquad \dim(X_i\setminus U_i)\leq3g-5.
$$
The required codimension bound follows, with
$X_{g,n,i}^\eps=X_i$.
\end{proof}

\subsection{Descent from the level cover to the spin stack}

Proposition~\ref{prop:small-compactification} allows us to apply
Theorem~\ref{thm:GAGA} to every connected component of
$U_{g,n}^\eps$.  We transfer this comparison to the spin
stack using the central gerbe and the finite \'etale level
cover.  The resulting statement applies to every algebraic
vector bundle on $\mathcal S_g^\eps$.

Let $F$ be an algebraic vector bundle on
$\mathcal S_g^\eps$. The stabiliser of a spin curve
$(C,L,L^{\otimes2}\simeq K_C)$ acts on the corresponding
fibre of $F$. In particular, the central scalar
automorphism
$\kappa=(\Id_C,-\Id_L)$ induces a global involution
$\kappa_F$ of $F$. Its two eigensubbundles give the
canonical decomposition
$$
 F=F_{\rm inv}\oplus F_{\rm anti},
$$
where
$$
 \begin{aligned}
 F_{\rm inv}
   &=\operatorname{im}\!\left(\frac{\Id_F+\kappa_F}{2}\right),
 &\qquad \kappa_F|_{F_{\rm inv}}&=\Id,\\
 F_{\rm anti}
   &=\operatorname{im}\!\left(\frac{\Id_F-\kappa_F}{2}\right),
 &\qquad \kappa_F|_{F_{\rm anti}}&=-\Id.
 \end{aligned}
$$
These summands describe the scalar action on $F$;
they are independent of the fixed spin parity $\eps$.

The next lemma shows that $F_{\rm inv}$ descends to the
rigidification, whereas $F_{\rm anti}$ has zero cohomology.
We can therefore transfer the comparison problem to the
rigidified spin stack and then use finite trace to detect
its cohomology classes on the level cover.

\begin{lemma}[The central gerbe and finite trace]
\label{lem:gerbe-trace}
Let
$$
 r:\mathcal S_g^\eps\longrightarrow\mathcal R_g^\eps,
 \qquad q:U_{g,n}^\eps\longrightarrow\mathcal R_g^\eps
$$
be the scalar $\boldsymbol\mu_2$-gerbe and the
finite \'etale surjective level cover defined above.
Then the following hold.
\begin{enumerate}[label=\textup{(\roman*)}]
\item For every vector bundle $F$ on $\mathcal S_g^\eps$,
the summand $F_{\rm inv}$ descends to a vector bundle
$F_{\rm rig}$ on $\mathcal R_g^\eps$, with
$r^*F_{\rm rig}\simeq F_{\rm inv}$. Pullback
followed by the summand inclusion induces an isomorphism
$$
 H^1(\mathcal R_g^\eps,F_{\rm rig})
 \xrightarrow{\sim}H^1(\mathcal S_g^\eps,F).
$$
The anti-invariant summand satisfies
$$
 H^k(\mathcal S_g^\eps,F_{\rm anti})=0
 \qquad(k\geq0).
$$
\item For every vector bundle $G$ on
$\mathcal R_g^\eps$, pullback is injective:
$$
 q^*:H^1(\mathcal R_g^\eps,G)
 \lhook\joinrel\longrightarrow H^1(U_{g,n}^\eps,q^*G).
$$
\end{enumerate}
{Both assertions hold analytically as well.  The
decomposition and the comparison maps commute with
analytification.}
\end{lemma}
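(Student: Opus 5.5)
For (i), I will work étale-locally on $\mathcal R_g^\eps$, where the gerbe $r$ is trivial: over an étale chart $V\to\mathcal R_g^\eps$, the fibre product is $V\times B\boldsymbol\mu_2$, with $\kappa$ generating the $\boldsymbol\mu_2$. A vector bundle on $V\times B\boldsymbol\mu_2$ is a bundle on $V$ with a $\boldsymbol\mu_2$-action. It therefore splits into weight-zero and weight-one parts, and these are exactly $F_{\rm inv}$ and $F_{\rm anti}$, because $\kappa$ is central and the splitting is canonical. The weight-zero part is a bundle on $V$. Its descent data come from $F$, since the projectors $(\Id\pm\kappa_F)/2$ commute with all automorphisms. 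This produces $F_{\rm rig}$ with $r^*F_{\rm rig}\simeq F_{\rm inv}$.

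For the cohomology statements I will use the pushforward $r_*$. In characteristic zero, $\boldsymbol\mu_2$ is linearly reductive, so $r_*$ is exact on quasi-coherent sheaves: $R^kr_*=0$ for $k>0$, since locally this is group cohomology of $\boldsymbol\mu_2$. Moreover $r_*F=F_{\rm rig}$, because locally it takes invariants, and $r_*F_{\rm anti}=0$. The Leray spectral sequence then gives $H^k(\mathcal S_g^\eps,F)\simeq H^k(\mathcal R_g^\eps,F_{\rm rig})$ and $H^k(\mathcal S_g^\eps,F_{\rm anti})=0$. I still have to check that this isomorphism is the composite of $r^*$ with the summand inclusion. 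That follows from the unit $F_{\rm rig}\to r_*r^*F_{\rm rig}=r_*F_{\rm inv}$ being an isomorphism.

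For (ii), I will use the trace for the finite étale surjective map $q$. Since $q$ is finite, $q_*$ is exact, so $H^1(U_{g,n}^\eps,q^*G)=H^1(\mathcal R_g^\eps,q_*q^*G)$. The projection formula gives $q_*q^*G=G\otimes q_*\cO$, and $\operatorname{tr}:q_*\cO\to\cO$ satisfies $\operatorname{tr}\circ{\rm unit}=\deg q$ on each connected component of $\mathcal R_g^\eps$, where the degree is locally constant and positive. Dividing by the degree retracts $G\to q_*q^*G$, so $q^*$ is injective on $H^1$. The main point to verify is that the trace is defined on the stack. Finite étale $q$ is representable, so its trace is defined on every étale chart and is compatible with base change, and hence it descends.

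For the analytic version I will repeat the same arguments in the analytic category. Analytically, $r$ is still locally $V\times B\boldsymbol\mu_2$, invariants under a finite group are still exact, and the finite étale trace exists. Because the decomposition, $r_*$, and the trace are all compatible with analytification, the comparison maps commute with these identifications. The step I expect to need the most care is identifying the Leray isomorphism with the explicit pullback-plus-inclusion map, together with the analogous compatibility analytically. I will handle both by the unit-map argument above.
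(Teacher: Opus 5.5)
Your proposal is correct and follows essentially the same route as the paper. For (i), both arguments trivialise the gerbe \'etale-locally as $V\times B\boldsymbol\mu_2$, use exactness of $\boldsymbol\mu_2$-invariants to get $r_*F\simeq F_{\rm rig}$ with vanishing higher direct images, and conclude by Leray. For (ii), both use the unit--trace composite $G\to q_*q^*G\to G$, which equals $(\deg q)\,\Id$ on cohomology, and the analytic case is handled by the same local arguments. Your explicit check, via the unit $F_{\rm rig}\to r_*r^*F_{\rm rig}$, that the Leray isomorphism is pullback followed by summand inclusion is a detail the paper leaves implicit.
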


\begin{proof}
The central involution gives the two global idempotents
displayed above.  Their images are direct summands of a locally
free sheaf, hence are vector bundles.  These constructions are
canonical and commute with pullback.

Work \'etale-locally on $\mathcal R_g^\eps$, where the
gerbe is $B\boldsymbol\mu_2$ over the base.  A vector
bundle there is a bundle with a $\boldsymbol\mu_2$-action;
its invariant summand is a bundle with trivial action and thus
comes from the base.  This description is compatible with
descent and produces $F_{\rm rig}$, with
$r^*F_{\rm rig}\simeq F_{\rm inv}$.

On an \'etale base chart $T$ over which the gerbe is
$T\times B\boldsymbol\mu_2$, let $E$ be the
$\boldsymbol\mu_2$-equivariant bundle on $T$
corresponding to the pullback of $F$ to this gerbe.
Then
$$
 (r_*F)|_T \simeq E^{\boldsymbol\mu_2},
$$
where $E^{\boldsymbol\mu_2}\subset E$ is the subsheaf
of sections fixed by the scalar involution.
Thus the direct image functor is locally identified
with the invariants functor
$E\mapsto E^{\boldsymbol\mu_2}$.
This functor is exact over $\C$, by the averaging operator
$\frac12(\Id+\kappa)$.  Its derived functors therefore
vanish in positive degrees; this is the linearly reductive
case of tame descent \cite{AOV}.  Since $F_{\rm anti}$
has no invariant sections, the resulting identities are
$$
\begin{aligned}
 r_*F_{\rm inv}
   &\simeq F_{\rm rig},\\
 R^k r_*F_{\rm inv}
   &=0 \qquad (k>0),\\
 R^k r_*F_{\rm anti}
   &=0 \qquad (k\geq0).
\end{aligned}
$$
These are local identities on the base and hence hold globally.
The Leray spectral sequence now gives both the acyclicity of
$F_{\rm anti}$ and the isomorphism
$$
 H^1(\mathcal R_g^\eps,F_{\rm rig})
 \simeq H^1(\mathcal S_g^\eps,F_{\rm inv})
 \simeq H^1(\mathcal S_g^\eps,F).
$$

The same proof works analytically on local charts neutralising
the gerbe.  There the relative cohomology is computed by the
cohomology of the finite group $\mu_2$; averaging makes
invariants exact and kills its positive-degree cohomology.
Thus the same direct-image identities and Leray argument apply.
The idempotents and the descent identifications commute with
analytification, giving the stated compatibility.  This proves
(i) in both categories.

For the level cover, let $d$ be the positive,
locally constant degree of $q$.  The unit and
the trace, together with the projection formula,
give morphisms
$$
 G\longrightarrow q_*q^*G
   \simeq G\otimes q_*\cO_{U_{g,n}^\eps}
 \xrightarrow{\Id\otimes\operatorname{Tr}_q}G
$$
whose composite is $d\,\Id_G$.  Since $q$
is finite, its higher direct images on
quasi-coherent sheaves vanish.  Passing to first
cohomology therefore gives
\begin{equation}\label{eq:finite-trace-injection}
 \operatorname{Tr}_q\circ q^*=d\,\Id.
\end{equation}
Division by $d$, on each connected component
of the base, supplies a left inverse to pullback.
This proves (ii).  Analytically, a finite \'etale
map is locally a finite disjoint union of copies
of the base, and trace is summation over these
copies.  The same identity proves injectivity.
The unit, trace and gerbe averaging commute with
analytification, and so do the induced cohomology
maps. 
\end{proof}

\begin{proposition}[Comparison on the spin stack]
\label{prop:spin-comparison}
For $g\geq3$, $\eps\in\{+,-\}$, and every algebraic
vector bundle $F$ on $\mathcal S_g^\eps$, the natural
map
$$
 H^1(\mathcal S_g^\eps,F)
 \longrightarrow H^1((\mathcal S_g^\eps)^{\an},F^{\an})
$$
is injective.  Consequently, an algebraic extension of vector
bundles on $\mathcal S_g^\eps$ splits algebraically if
and only if it splits holomorphically after analytification.
\end{proposition}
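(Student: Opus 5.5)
We reduce the injectivity on the spin stack to \cref{thm:GAGA} on the connected components of the level cover, using the two descent statements of \cref{lem:gerbe-trace}. Let $\alpha\in H^1(\mathcal S_g^\eps,F)$ have zero analytic image. By \cref{lem:gerbe-trace}(i), $\alpha=r^*\alpha_{\rm rig}$ for a unique $\alpha_{\rm rig}\in H^1(\mathcal R_g^\eps,F_{\rm rig})$. The analytic version of the same isomorphism commutes with analytification, so $\alpha_{\rm rig}^{\an}$ corresponds to $\alpha^{\an}=0$ and is therefore zero. The question is thus reduced to the rigidified stack, with $G=F_{\rm rig}$.

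Next we pull back along the level cover $q:U_{g,n}^\eps\to\mathcal R_g^\eps$. The class $q^*\alpha_{\rm rig}$ lies in
$$
 H^1(U_{g,n}^\eps,q^*G)=\bigoplus_i H^1(U_{g,n,i}^\eps,q^*G|_{U_{g,n,i}^\eps}),
$$
and its analytification is $q^{\an*}\alpha_{\rm rig}^{\an}=0$, by naturality of pullback with respect to analytification. On each connected component, \cref{prop:small-compactification} provides a normal projective compactification with complement of codimension at least two. Hence \cref{thm:GAGA} applies to the algebraic vector bundle $q^*G|_{U_{g,n,i}^\eps}$ on that scheme, and each component of $q^*\alpha_{\rm rig}$ vanishes algebraically. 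Then \cref{lem:gerbe-trace}(ii) gives $\alpha_{\rm rig}=0$, and so $\alpha=r^*\alpha_{\rm rig}=0$.

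For the final sentence of the statement, take an extension $0\to F_1\to B\to F_2\to0$ of vector bundles on $\mathcal S_g^\eps$. Its class lies in $\Ext^1(F_2,F_1)=H^1(\mathcal S_g^\eps,\Hom(F_2,F_1))$, and the analytic class is the analytification of the algebraic one. A holomorphic splitting kills the analytic class, so injectivity, applied to the vector bundle $\Hom(F_2,F_1)$, kills the algebraic class. The converse direction is immediate, since an algebraic splitting analytifies to a holomorphic one.

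The only delicate point is compatibility. The algebraic-to-analytic comparison map on $H^1$ of the stack, computed on the lisse--\'etale site, must commute with $r^*$, with the eigen-decomposition and with $q^*$. \Cref{lem:gerbe-trace} already asserts this compatibility, and on the scheme $U_{g,n}^\eps$ the comparison is the usual one, so I expect no real obstacle beyond bookkeeping these commutative squares.
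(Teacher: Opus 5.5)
Your proposal is correct and follows the paper's proof essentially step for step: the gerbe isomorphism of \cref{lem:gerbe-trace}(i), compatible with analytification, reduces the problem to $G=F_{\rm rig}$ on $\mathcal R_g^\eps$. The same diagram chase then applies, using algebraic injectivity of $q^*$ from the trace identity, \cref{thm:GAGA} on each connected component of $U_{g,n}^\eps$ via \cref{prop:small-compactification}, and the finite product decomposition of cohomology, with the splitting statement deduced by applying injectivity to $\Hom(F_2,F_1)$.
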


\begin{proof}
Decompose $F=F_{\rm inv}\oplus F_{\rm anti}$.
Lemma~\ref{lem:gerbe-trace}(i) eliminates the anti-invariant
summand from first cohomology in both categories, compatibly
with analytification.  Its gerbe isomorphisms therefore reduce
the assertion to the descended bundle $G=F_{\rm rig}$
on $\mathcal R_g^\eps$.
For this descended bundle $G$, put
$R=\mathcal R_g^\eps$ and $U=U_{g,n}^\eps$, and
consider the finite \'etale cover $q:U\to R$.
Naturality of analytification with respect to pullback
along $q$ gives the commutative diagram
$$
\begin{tikzcd}[column sep=large]
 H^1(R,G)\arrow[r,hook,"q^*"]\arrow[d]&
 H^1(U,q^*G)\arrow[d,hook]\\
 H^1(R^{\an},G^{\an})\arrow[r,hook]&
 H^1(U^{\an},(q^*G)^{\an}).
\end{tikzcd}
$$
The horizontal maps are injective by the trace
statement in Lemma~\ref{lem:gerbe-trace}(ii).
The right comparison is injective on each connected
component of $U$, by
Proposition~\ref{prop:small-compactification} and
Theorem~\ref{thm:GAGA}.  Cohomology on a finite
disjoint union is the product of the cohomology
groups of its components, so the right comparison
is injective on all of $U$.

If a class at the upper left has zero analytic
image, its pullback at the upper right also has
zero analytic image.  Right-hand injectivity makes
that pullback zero algebraically, and top-row
injectivity makes the original class zero.
The gerbe isomorphisms transfer the conclusion
back to $\mathcal S_g^\eps$.

An extension of vector bundles $E_2$ by $E_1$ has class in
$$
 H^1(\mathcal S_g^\eps,\Hom(E_2,E_1)).
$$
A holomorphic splitting makes its analytic image zero, so the
injectivity just proved makes the algebraic extension split.
The converse follows by analytifying an algebraic splitting.
\end{proof}

\subsection{The holomorphic primary obstruction}

Theorem~\ref{thm:algebraic} gives a nonzero algebraic primary
class, and \cref{prop:spin-comparison} now applies to its
coefficient bundle. 
It remains to identify the analytic image of that class with
the holomorphic primary obstruction.  This compatibility will
complete the non-projectedness theorem for the entire smooth
moduli component, in every $g\geq3$ and both spin parities.

\begin{theorem}[Non-projectedness in every genus at least three]
\label{thm:main}
For every $g\geq3$ and $\eps\in\{+,-\}$, the primary
obstruction of the smooth unpunctured supermoduli component
$\M_g^\eps$ is nonzero both algebraically and holomorphically:
$$
 \omega_2^{\rm alg}(\M_g^\eps)\ne0,
 \qquad
 \omega_2^{\rm hol}((\M_g^\eps)^{\an})\ne0.
$$
Consequently both components are non-projected and non-split in
both categories.
\end{theorem}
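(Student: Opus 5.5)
The algebraic half is already \cref{thm:algebraic}, so the plan concentrates on the holomorphic statement. Put
$$
 F=\Hom\!\left(\bigwedge^2T_-\M_g^\eps,T_+\M_g^\eps\right),
$$
an algebraic vector bundle on $\mathcal S_g^\eps$. I will argue by contradiction: if $\omega_2^{\rm hol}((\M_g^\eps)^{\an})$ vanished and its image in $H^1((\mathcal S_g^\eps)^{\an},F^{\an})$ agreed with the analytification of $\omega_2^{\rm alg}(\M_g^\eps)$, then \cref{prop:spin-comparison} would force $\omega_2^{\rm alg}(\M_g^\eps)=0$. This contradicts \cref{thm:algebraic}. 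The non-projectedness and non-splitness conclusions then follow as in that theorem, since a projection (algebraic or holomorphic) kills the corresponding primary class and splitness implies projectedness.

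The substantive step is the compatibility
$$
 \omega_2^{\rm alg}(\M_g^\eps)^{\an}=\omega_2^{\rm hol}((\M_g^\eps)^{\an})
 \quad\text{in } H^1((\mathcal S_g^\eps)^{\an},F^{\an}).
$$
I would prove it using the extension description rather than \v{C}ech cocycles. Choose an \'etale algebraic atlas of $\M_g^\eps$ with local split coordinates, which exist on affine charts of smooth superschemes. The algebraic class is the \v{C}ech class of the quadratic-odd terms $c_{ij}$ of the even transition functions. Analytification of this atlas gives an analytic atlas of $(\M_g^\eps)^{\an}$ together with analytic split coordinates: the same functions, regarded as holomorphic. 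The holomorphic class is computed by exactly the same recipe on any analytic atlas with split coordinates. Independence of such choices follows from the coboundary computation in the functoriality identity \eqref{eq:primary-naturality}, applied to the identity map between two atlases. It therefore suffices to evaluate it on the analytified algebraic atlas, where the cocycle is literally $c_{ij}^{\an}$.

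Finally, the natural comparison map $H^1(\mathcal S_g^\eps,F)\to H^1((\mathcal S_g^\eps)^{\an},F^{\an})$ sends the \v{C}ech class of $(c_{ij})$ to the class of $(c_{ij}^{\an})$. This holds because on the lisse-\'etale site the \v{C}ech-to-derived maps commute with analytification. Equivalently, one can say it through the extension \eqref{eq:Green-extension} restricted to $\mathcal S_g^\eps$: its analytification represents the holomorphic primary class, as in the proof of \cref{thm:GAGA}.

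I expect the main care to be needed in this identification on a Deligne--Mumford stack, rather than in any new estimate. One has to check that an analytic atlas obtained from an \'etale algebraic atlas is admissible for defining $\omega_2^{\rm hol}$, and that the stack-level comparison map on $H^1$ is compatible with the \v{C}ech representatives on the groupoid. For this I would rely on the foundations cited in \cite{CodogniViviani} and note that \cref{lem:gerbe-trace} already asserts the compatibility with analytification of all the descent maps used in \cref{prop:spin-comparison}.
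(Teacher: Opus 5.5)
Your proposal is correct and follows essentially the same route as the paper: injectivity of analytification on $H^1$ of the coefficient bundle via \cref{prop:spin-comparison}, combined with identifying the analytified quadratic-odd cocycle on an \'etale atlas with split coordinates as the cocycle of the holomorphic primary class, with chart-independence coming from the preserved cocycle and coboundary identities. The only loose point is your aside that the analytified extension \eqref{eq:Green-extension} ``represents the holomorphic primary class''. That statement holds only after the cocycle identification, so it restates the compatibility rather than proving it independently.
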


\begin{proof}
By Section~\ref{sec:transverse}, the algebraic primary class
belongs to $H^1(\mathcal S_g^\eps,K_{2,g}^\eps)$, where
$$
 K_{2,g}^\eps=
 \Hom(\bigwedge^2T_-\M_g^\eps,T_+\M_g^\eps).
$$
This is an algebraic vector bundle, so
\cref{prop:spin-comparison} gives an injection on its first
cohomology under analytification.

We next identify this analytic image with the holomorphic
primary obstruction. On smooth charts, analytification
identifies algebraic differentials with holomorphic
differentials and preserves the odd ideal and its powers.
The definitions in Section~\ref{sec:transverse} therefore
give canonical identifications
$$
 \bigl(T_\pm\M_g^\eps\bigr)^{\an}
 \simeq T_\pm\bigl((\M_g^\eps)^{\an}\bigr).
$$
Since analytification also commutes with duals, tensor
products and exterior powers of vector bundles,
$(K_{2,g}^\eps)^{\an}$ is precisely the coefficient
bundle of the holomorphic primary obstruction.

Choose an \'etale atlas by smooth superschemes and,
after refinement, local split coordinates on its charts.
As recalled in Section~\ref{sec:transverse}, the quadratic
terms in the odd variables in the even coordinate changes
define a cocycle representing
$\omega_2^{\rm alg}(\M_g^\eps)$.
Analytification turns these coordinate changes into
holomorphic ones and preserves both their linear
transition maps and their quadratic coefficients.
The resulting cocycle is therefore the one defining
the holomorphic primary obstruction.

The cocycle identities on the atlas groupoid, as well as
the coboundary identities arising from changes of local
splittings, are preserved by analytification. Thus this
identification is independent of the chosen charts and
descends to the spin stack. Under the natural comparison
map, it gives
$$
 \omega_2^{\rm alg}(\M_g^\eps)
 \longmapsto
 \omega_2^{\rm hol}\bigl((\M_g^\eps)^{\an}\bigr).
$$

Finally, Theorem~\ref{thm:algebraic} says that
the class on the left is nonzero.  Injectivity
of Proposition~\ref{prop:spin-comparison} makes
its image on the right nonzero as well.
Projectedness would force the relevant primary
class to vanish, and a splitting would supply
a projection.  Both are therefore excluded in
both categories, proving the theorem.
\end{proof}



\end{document}